
\documentclass[11pt]{amsart}

\usepackage{euscript}
\usepackage{amsmath}
\usepackage{amsthm}
\usepackage{epsfig}
\usepackage{amssymb}\usepackage{amscd}
\usepackage{epic}
\usepackage{eufrak}

\numberwithin{equation}{section}
\numberwithin{equation}{subsection}

\theoremstyle{plain}
\newtheorem{theorem}[equation]{Theorem}
\newtheorem{lemma}[equation]{Lemma}
\newtheorem{proposition}[equation]{Proposition}

\newtheorem{corollary}[equation]{Corollary}

\newtheorem{conjecture}[equation]{Conjecture}

\newtheorem{prop}[equation]{Proposition}

\theoremstyle{definition}
\newtheorem{example}[equation]{Example}
\newtheorem{remark}[equation]{Remark}

\newtheorem{definition}[equation]{Definition}

\newtheorem{rem}[equation]{Remark}

\numberwithin{equation}{section}
\numberwithin{equation}{subsection}

\oddsidemargin 0pt \evensidemargin 0pt \marginparsep 10pt
\topmargin 0pt \baselineskip 14pt \textwidth 6in \textheight 9in

\DeclareMathOperator{\rank}{{\rm rank}}

\newcommand{\bH}{{\mathbb H}}

\def\C{\mathbb C}
\def\Q{\mathbb Q}
\def\R{\mathbb R}
\def\Z{\mathbb Z}

\def\N{\mathbb N}

\def\P{\mathbb P}

\newcommand{\calv}{{\mathcal V}}
\newcommand{\cala}{{\mathcal A}}

\newcommand{\calf}{{\mathcal F}}
\newcommand{\calO}{{\mathcal O}}

\newcommand{\frs}{{\frak s}}\newcommand{\frI}{{\frak I}}\newcommand{\frl}{{\frak l}}

\newcommand{\caln}{\mathcal{N}}

\newcommand{\X}{\tilde X}

\newcommand{\labelpar}{\label}
\newcommand{\m}{\mathfrak{m}}

\newcommand{\bfp}{p}

\newcommand{\frsw}{\mathfrak{sw}}

\newcommand{\wt}{{\rm wt}}

\title{The geometric genus of hypersurface singularities}

\author{Andr\'as N\'emethi}
\address{A. R\'enyi Institute of Mathematics, 1053 Budapest, Re\'altanoda u. 13-15, Hungary.}
\email{nemethi.andras@renyi.mta.hu}

\author{Baldur Sigur\dh sson}
\address{Central European University, Budapest and A. R\'enyi Institute of Mathematics, 1053 Budapest,
Re\'altanoda u. 13-15,  Hungary.}
\email{sigurdsson.baldur@renyi.mta.hu}
\thanks{The first author is partially supported by OTKA Grant 100796.
The second author is supported by the PhD program of the CEU, Budapest and
by the `Lend\"ulet' and ERC program `LTDBud' at R\'enyi Institute. }

\keywords{normal surface singularities, hypersurface singularities,
links of singularities, Newton non-degenerate singularities, geometric genus,
plumbing graphs, $\Q$--homology spheres, lattice cohomology, path lattice cohomology,
Heegaard--Floer homology, Seiberg--Witten invariant}

\subjclass[2010]{Primary. 32S05, 32S25, 32S50, 57M27,
Secondary. 14Bxx,  32Sxx, 57R57, 55N35}

\date{}

\begin{document}

\maketitle


\pagestyle{myheadings} \markboth{{\normalsize
A. N\'emethi and B. Sigur\dh sson}}{ {\normalsize Geometric genus of hypersurface singularities }}

\begin{abstract}
Using the path lattice cohomology we provide a conceptual topological characterization of the
geometric genus for certain complex normal surface singularities with rational homology sphere links,
which is uniformly valid for all superisolated and Newton non--degenerate hypersurface
singularities.
\end{abstract}

\section{Introduction}\labelpar{s:1}

\subsection{}
In this introduction we present the main result of the manuscript, for detailed definitions,
motivations, historical remarks and examples see the next section.

In the last years several conjectures and theorems target the topological characterization
of the geometric genus $p_g$
of complex normal surface singularities with rational homology sphere links.
They are usually formulated for certain families, and any
attempt  to find uniform characterization failed.

In order to have a chance for such a characterization, one has necessarily to assume two restrictions,
an analytical one and a topological one. The {\it Casson
Invariant Conjecture} (CIC) of Neumann and Wahl (\cite{NW}) predicts that for a complete intersection with
integral homology sphere link $p_g$ can be determined from the Casson invariant of the link  (see
\ref{con:CIC} here). This was generalized to rational homology sphere links by the first
author and Nicolaescu
\cite{[51]}; the {\it Seiberg--Witten Invariant Conjecture} (SWIC) connects $p_g$ with
the Seiberg--Witten invariant of the link (associated with the canonical $spin^c$--structure), see \ref{con:SWIC}. The predicted formula was
proved for several analytic families (e.g. rational, minimally elliptic, weighted homogeneous, splice quotient singularities), nevertheless it failed even
for some sporadic  hypersurfaces: namely, for certain superisolated  singularities.

The present note aims to find a uniform treatment for these counterexamples, and proposes
a new conceptual topological candidate for $p_g$, which is valid even for other important families of hypersurface singularities, e.g. for those with Newton non-degenerate principal part.
The main ingredient of the topological characterization is the {\it
path lattice cohomology} associated with the
link (or, with the negative definite lattice of a fixed resolution graph).

Recall that the lattice cohomology $\{\bH^q_{red}(M)\}_{q\geq 0}$ of the link (introduced in
\cite{Nlat}) is a  new categorification of the Seiberg--Witten invariant,
that is, its `normalized' Euler characteristic $eu(\bH^*(M))$ is  the Seiberg--Witten invariant.
In the $p_g$--comparisons the main dominating term is the first module $\bH^0_{red}(M)$, and in the superisolated case the non-vanishing of the next  terms are responsible for the failure of the SWIC.

Accordingly, the proposed new invariant  targets  a
 different version of the lattice cohomology, which concentrates
only on the $q=0$ part, and even optimizes it along different `paths'. A path is a sequence
of integral cycles supported on the exceptional curve of a fixed resolution, at each step
 increasing only by a base element, and connecting the trivial cycle with the anticanonical cycle.
For such a path $\gamma$ one defines a path lattice cohomology $\bH^0(\gamma)$, and one takes its
normalized rank $eu(\bH^0(\gamma))$. Then one shows that for any analytic type one has
$p_g\leq \min_\gamma eu(\bH^0(\gamma))$, hence it provides a natural topological upper bound for the
geometric genus. (The authors do not know if    $\min_\gamma eu(\bH^0(\gamma))$ can be defined by any other
construction, say, using gauge theory or low dimensional topology.)

The main result of the article is the following.

\begin{theorem}\labelpar{th:mainINTR} Assume that $(X,0)$ is a normal surface singularity whose link is
rational homology sphere.
Then the identity $p_g=\min_\gamma\, eu(\bH^0(\gamma))$ is true in the following cases:

(a) if $\bH^q(M)=0$ for $q\geq 1$ and the singular germ satisfies the SWIC Conjecture
(in particular, for all weighted homogeneous and minimally elliptic singularities);

(b) superisolated singularities (with arbitrary number of cusps);

(c) singularities with non-degenerate Newton principal part.

\noindent
Moreover, since the conjecture is stable with respect to equisingular deformation of hypersurfaces,
the conjecture remains valid for such deformations of any of the above cases.
\end{theorem}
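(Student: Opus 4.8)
The plan is to combine the universal bound $p_g\le \min_\gamma\, eu(\bH^0(\gamma))$, already available for an arbitrary analytic structure, with, in each of the three cases, the construction of one explicit path $\gamma=\{x_0=0,x_1,\dots,x_t=Z_K\}$ (with $x_i=x_{i-1}+E_{v_i}$) realising $eu(\bH^0(\gamma))\le p_g$. The computational engine is the closed formula, read off directly from the definition of $\bH^0(\gamma)$ as the lattice cohomology of the sequence $\{\chi(x_i)\}_i$, where $\chi(l)=-(l,l+K)/2$:
$$ eu(\bH^0(\gamma)) \;=\; \sum_{i=1}^{t}\max\{0,\,\chi(x_{i-1})-\chi(x_i)\} \;=\; \sum_{i=1}^{t}\max\{0,\,(x_{i-1},E_{v_i})-1\}, $$
the total ``descent'' of $\chi$ along $\gamma$. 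The point is that $\max\{0,(x_{i-1},E_{v_i})-1\}=h^1(\calO_{E_{v_i}}(-x_{i-1}))$, so in the exact sequences $0\to\calO_{E_{v_i}}(-x_{i-1})\to\calO_{x_i}\to\calO_{x_{i-1}}\to 0$ this sum bounds $h^1(\calO_{x_t})$ from above, with equality exactly when every restriction map $H^0(\calO_{x_i})\to H^0(\calO_{x_{i-1}})$ is onto; moreover, since hypersurfaces are Gorenstein the $K$-duality $\chi(Z_K-l)=\chi(l)$ (with $Z_K=-K$) allows one to stay inside the box $[0,Z_K]$ and to identify $h^1(\calO_{Z_K})$ with $p_g$. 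So in cases (b) and (c) the task is to choose a computation sequence making all these maps surjective and check that the resulting descent sum equals the classical formula for $p_g$.

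\textbf{Case (a).} Here I would first use the fact that $eu(\bH^*(M))$ is the (canonical, normalised) Seiberg--Witten invariant together with the SWIC hypothesis to get $p_g=eu(\bH^*(M))$, and then the vanishing $\bH^{\ge 1}(M)=0$ to rewrite this as $p_g=eu(\bH^0(M))$. It then suffices to prove the purely lattice-theoretic comparison $\min_\gamma\, eu(\bH^0(\gamma))\le eu(\bH^0(M))$: from the graded root attached to $\bH^0(M)$ one builds an increasing computation sequence from $0$ to $Z_K$ which descends to a vertex achieving $\min\chi$ and then ascends symmetrically, arranged so that the merge tree of $\chi|_\gamma$ is dominated, level by level, by the graded root of $\bH^0(M)$. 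Together with the universal bound this forces $p_g=\min_\gamma\, eu(\bH^0(\gamma))$. For weighted homogeneous and minimally elliptic germs both hypotheses (SWIC and $\bH^{\ge 1}(M)=0$) are available in the literature, which is why those families appear explicitly.

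\textbf{Cases (b) and (c).} Now SWIC is unavailable, so both sides must be computed from an explicit resolution. For a superisolated germ $\{f_d+f_{d+1}=0\}$ with $C=\{f_d=0\}\subset\P^2$ a rational curve with unibranch (cuspidal) singularities, the minimal good resolution is the blow-up of the origin (whose exceptional set, before resolving the cusps, is the curve $C$ itself, yielding a rational central node) followed by the resolutions of the singular points of $C$; the graph is star-shaped with one leg per singular point. On the analytic side $p_g$ is given by the classical superisolated formula --- a $\binom{d-1}{3}$-type term attached to the central node plus local contributions of the plane-curve singularities. On the topological side I would take the path that first builds the central node optimally and then runs, leg by leg, along the optimal path of each cusp, and show --- using the splitting of $\chi$ along the star and the known lattice-cohomological description of the cusp germs --- that $eu(\bH^0(\gamma))$ decomposes as exactly the same central term plus the same local terms. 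For Newton non-degenerate $f$ I would instead use the Newton (toric) resolution: the dual graph, $\chi$ and $Z_K$ are read off from the Newton diagram $\Gamma$, while $p_g$ equals the number of points of $\Z^3_{>0}$ lying under $\Gamma$ (Merle--Teissier/Khovanskii). The path is organised by the facets of $\Gamma$ and swept in the anticanonical direction, and one checks that the telescoping descent sum $\sum_i\max\{0,(x_{i-1},E_{v_i})-1\}$ counts precisely those lattice points.

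\textbf{Equisingular deformations and the main obstacle.} The last assertion is formal: $\min_\gamma\, eu(\bH^0(\gamma))$ depends only on the resolution graph, hence only on the link, hence is unchanged under an equisingular deformation; and $p_g$ is likewise constant in an equisingular ($\mu$-constant) family of hypersurface singularities by standard results (e.g.\ constancy of the spectrum), so the identity is inherited from the cases above. The genuinely hard parts are: in (a), the graded-root comparison $\min_\gamma\, eu(\bH^0(\gamma))\le eu(\bH^0(M))$ and the control of the $Z_K$-endpoint via $K$-duality; and in (b)--(c), proving that the combinatorially natural path is actually \emph{optimal} and that its descent sum reproduces the classical formula for $p_g$ --- equivalently, that none of the restriction maps $H^0(\calO_{x_i})\to H^0(\calO_{x_{i-1}})$ in the chosen computation sequence drops rank --- which is exactly where the special geometry of superisolated, resp.\ Newton non-degenerate, hypersurfaces enters.
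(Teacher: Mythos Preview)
Your treatment of case (a) matches the paper: the chain $p_g\le \min_\gamma eu(\bH^0(\gamma))\le eu(\bH^0(M))=eu(\bH^*(M))=p_g$ forces equality. Your outline for case (c) is also broadly in the spirit of the paper's argument, though the actual construction of the path is considerably more delicate than ``sweep by facets'': the paper uses a completion operation $l\mapsto c(l)$ together with a ratio test to choose $v(i)$, and then proves two separate inequalities sandwiching $|P_i|$ (a lattice-point count on a shifted face of $\Gamma$) between $\max\{0,1-(E_{v(i)},z_i)\}$ and $\dim H^0(\calO(-z_i))/H^0(\calO(-z_{i+1}))$; the second of these needs a divisorial-versus-Newton filtration comparison (Ebeling--Gusein-Zade).

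Case (b) is where your proposal has a genuine gap. First, the resolution graph of a superisolated germ is \emph{not} star-shaped with one leg per cusp: each cusp contributes its full embedded resolution tree (which itself may have nodes), attached to the proper transform of $C$; the paper's own example with two cusps $(3,4)$ and $(2,7)$ has two nodes joined by a chain, not a star. Second, and more importantly, the paper does \emph{not} construct an explicit path for superisolated germs at all. Instead it proves directly that $p_g=eu(\bH^0(M))$ and then invokes (a). This equality is obtained by combining two nontrivial external results: (i) the computation of $eu(\bH^0(S^3_{-d}(K)))$ for surgery manifolds in terms of the semigroups of the cusps (N\'emethi--Rom\'an), and (ii) the Borodzik--Livingston theorem, proved via the Heegaard--Floer $d$-invariant, which evaluates the relevant semigroup minima in the algebraically realizable case. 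The Heegaard--Floer input is essential: without it one only has the formula $eu(\bH^0)=\sum_{j\ge 0}\min\{\overline W|_{\Sigma_{jd}}\}$, and it is precisely the algebraic realizability constraint $\sum_i\mu_i=(d-1)(d-2)$ that, through Borodzik--Livingston, forces this sum to equal $d(d-1)(d-2)/6=p_g$. Your proposed ``central node plus local leg contributions'' decomposition gives no mechanism for this global degree--local-type coupling, and in particular would (if it worked) apply to $S^3_{-d}(K)$ for arbitrary $d$, which is false.
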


\vspace{2mm}

The next section contains all the necessary definitions, main guiding examples, and status quo of the
problem. Section 3 contains the proof for superisolated germs. In this case the link is a surgery
3--manifold. The proof has  two non--trivial ingredients, already present in the recent literature:
the first one provides the lattice cohomology of surgery 3--manifolds \cite{NR}, the other is an
application of $d$--invariant vanishing result of certain $L$--space surgery 3--manifolds in
Heegaard Floer knot theory \cite{BL}.
The next sections contain the proof of the Newton non-degenerate case:
it involves deeply the very specific combinatorics of the associated toric resolution, and a lattice point
counting.

\section{Geometric genus formulae, conjectures, guiding examples}\labelpar{s:2}
\subsection{Preliminaries: the geometric genus.}\labelpar{ss:pre}
Let us fix a complex analytic normal surface singularity $(X,0)$.
Let $M$ be its link, the oriented smooth  3--manifold which is the boundary of a convenient
small representative $X$ of the germ. Since the real cone over $M$  is homeomorphic to $X$,
$M$ characterizes completely the local topology of $(X,0)$.
If we consider a resolution of $X$ with dual resolution graph $G$,
then $M$ can be realized as a plumbed 3--manifold associated with  $G$, and, in fact,
it contains the same information as $G$ itself (cf. \cite{NP}).
The {\it topological invariants } of the germ $(X,0)$ are read either from the
topology of $M$, or from the combinatorics of $G$.

The {\it analytic invariants} of $(X,0)$ are a priori associated with the
analytic structure of $(X,0)$ read e.g.  from the local algebra $\calO_{X,0}$, or from
the analytic sheaves of a resolution $\widetilde{X}\to X$ of $X$.
The very first one, and  probably the most important one, is the geometric genus
 $p_g:= \dim H^1(\widetilde{X},\calO_{\widetilde{X}})$. It guides (partially)
 the classification of singular germs and their deformation theory
\cite{Artin66,Laufer72,Laufer77,LaW,LaS,Ninv,Yau3,Yau1,WahlAnnals},
it is the local analog of the global Todd index of complex manifolds.
 As a `local index', it has several key connections with other numerical invariants as well
(see e.g. (\ref{eq:durfee})).  Usually, the geometric genus cannot be determined from the link,
even if we consider rather `simple' singularities. For example, the hypersurface singularities
$\{x^2+y^3+z^{18}=0\}$ and $\{z^2=y(x^4+y^6)\}$ have the same link but their $p_g$ are
3 and 2 respectively (cf. \cite[4.6]{NINV}).

Nevertheless, there is a strong belief, seriously supported by the results of the last decade,
that under some restrictions, $p_g$ can be determined from $M$. First of all, one needs to assume that $M$
is a {\it rational homology sphere}, or, equivalently, the resolution exceptional divisor is a tree
 of rational curves (in the above example the first Betti number $b_1(M)$ of $M$ is 2).
This is still not enough. E.g., consider an elliptic singularity with $p_g\geq 2$
(and even with integral homology sphere link), say for example the
hypersurface $x^2+y^3+z^{13}=0$ with $p_g=2$. Then, by \cite[4.1]{Laufer77},
the generic (non--Gorenstein)
analytic structure supported by the same topological type has $p_g=1$
(see also \cite{Ninv}). For other pairs with the same integral homology sphere link, but with different geometric genus see \cite{LMN}.  Hence,
one needs to add some analytic restriction too to guarantee the topological characterization of $p_g$.

In the choice of the analytic structures one
 possibility would be to consider a  {\it generic} one
on each irreducible component of the moduli space of analytic structures --- which, by the semicontinuity
of $p_g$ \cite{Elkik}, would provide the smallest $p_g$ of that moduli component. But,
it is equally  challenging (and this is  our interest here)
 to search for the geometric genus of special families of germs, which are the candidates providing
 the topological upper bound for $p_g$.
They can be related either  with special properties of $(X,0)$
(e.g. hypersurface, ICIS, Gorenstein, $\Q$--Gorenstein),
or with special constructions (see \ref{ex:examples} below).

\begin{example}\labelpar{ex:examples}
The next families will play  a key role in the next discussions.

(a) \  {\it Splice quotient} singularities
were introduced by Neumann and Wahl \cite{NW,NWnew,NWnew2}, their graph $G$ needs to satisfy  some
arithmetical  properties, which allow one to write down from the  combinatorics of $G$
the equations of the universal abelian cover of $(X,0)$ (up to equisingular deformation),
together with the corresponding action of $H_1(M,\Z)$ on it.
They  generalize the weighted homogeneous germs, but the equations associated
with different nodes might have different weights and degrees.
By construction their geometric genus depends only on $G$, the precise expression
is given in \cite{NO2} as an answer to Conjecture \ref{con:SWIC}.

(b) \ {\it Superisolated} singularities were introduced by I. Luengo \cite{Ignacio},
and they played a crucial role
in several testing procedures or counterexamples
\cite{Artal,aclm,LMN,Melle}.
If $C$ is a projective reduced plane curve with homogeneous equation
$f_d$ of degree $d$, and $f_{d+1} $ is a generic homogeneous equation of degree $d+1$,
then $f=f_d+f_{d+1}:(\C^3,0)\to (\C,0)$ is called  superisolated. Its link and $p_g$
 are independent of the choice of $f_{d+1}$ (see e.g. \cite{LMN}); in fact,
 $p_g=d(d-1)(d-2)/6$. We will assume that $C$ is irreducible.
  Then the link of $f$  is $\Q HS^3$ if and only if
$C$ is a rational cuspidal curve. We refer to the number of cusps as $\nu$.

(c) \ For {\it Newton non-degenerate hypersurfaces} see e.g. \cite{Kou}. Their principal
part contains
monomials  situated on a fixed Newton diagram $\Gamma$ and have generic coefficients.
Their geometric genus can be recovered as the number of lattice points with all positive entries and
which are `not above $\Gamma$' \cite{MT}. For more see Section \ref{s:NN1}.
\end{example}

\bekezdes{}
If $(X,0)$ is a hypersurface singularity in $(\C^3,0)$ then the
topological and analytic  invariants are strongly related with
those provided by the {\it embedded topological type} of $(X,0)$, that is, with the topology
of the embedding $M\subset S^5$, and the numerical invariants of the Milnor fibration.
Recall that the second Betti number of the Milnor fiber $F$ is the Milnor number $\mu$,
the intersection form on $H_2(F)$ determines the Sylvester invariants  $\mu_+,\ \mu_- $ and $\mu_0$,
while the signature
is defined by $\sigma=\mu_+-\mu_-$. Modulo the link $M$, the numerical invariants $p_g$, $\mu$ and $\sigma$
are related by two identities. Indeed, if $K$ is the canonical class/cycle on
$\widetilde{X}$ and $|\calv|$ is the number of vertices of $G$, then $K^2+|\calv|$
is a well--defined topological invariant of $M$,
and one has the following identities (valid in fact for any smoothing of a Gorenstein $(X,0)$)
\cite{Du,Laufer77b,St.Proc,W,LW}:
\begin{equation}\label{eq:durfee}
 \mu=12p_g+K^2+|\calv|-b_1(M),\ \ \ \ \ \ \ \ 
 -\sigma= 8p_g+K^2+|\calv|.
\end{equation}
Hence, if any of $p_g,\ \mu$ or $\sigma$ can be described from $M$ than the same is true
for all of them.   Note that $\mu$ can be recovered from the {\it embedded topological type}
(as the second Betti number of the universal cover of $S^5\setminus M$), hence this fact remains true for
$p_g$ as well. 

\bekezdes{}\label{bek:NOTG} {\bf Notations regarding $G$.} Regarding the link we will need the following
notations and terminology.
We fix a  resolution $\pi:\widetilde{X}\to X$  with resolution graph $G$ as above.
{\it We assume that $M$ is a $\Q HS^3$.}
Consider the lattice $L=H_2(\widetilde{X},\Z)$; it is freely generated by
$\{E_v\}_{v\in {\mathcal V}}$, the
irreducible components of the exceptional divisor $E:=\pi^{-1}(0)$ of $\pi$.
It is known that $G$ is connected and $L$ (that is, the intersection form
$\frI:=\{(E_v,E_u)\}_{v,u}$) is negative definite.
The determinant of the graph $G$ is defined as the absolute value of $\det(\frI)$.

Since we treat mainly hypersurface singularities, which are Gorenstein, we will assume that
 $G$ is {\it numerically Gorenstein}. This means that the {\it canonical cycle}
$K$, which satisfies the  system of
{\it adjunction relations} $(K+E_v,E_v)=- 2$ for all $v$,
is an integral cycle of $L$.

We define  $\chi:L\to \Z$ by $\chi(l)=-(l,l+K)/2$. (This is the Riemann--Roch formula:
for $l$ effective $\chi(l)$ is the analytic Euler characteristic of $\calO_l$.)
Set $\m:=\min_{l\in L}\chi(l)$.

If  $l'_k=\sum_vl'_{kv}E_v$ for $k=1,2$, then we write
$\min\{l'_1,l'_2\}:= \sum_v\min\{l'_{1v},l'_{2v}\}E_v$, and  $l'_1\leq l'_2$ if
$l'_{1v}\leq l'_{2v}$ for all $v\in {\mathcal V}$.
The valency of the vertex $v\in\calv$ is denoted by $\delta_v$.

We will write $Z_K:=-K$. Furthermore, we will assume that
$G$ is {\it a minimal good (resolution) graph}.  In such a case
one has the following.

\begin{lemma}\labelpar{cor:zke}
 Either  $(X,0)$ is  rational (equivalently A-D-E, hence $Z_K=0$)
or $Z_K > E$. Moreover, in the second case, the support of $Z_K-E$ is connected.
\end{lemma}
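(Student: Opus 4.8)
The plan is to analyze the canonical cycle $Z_K=-K$ through the adjunction relations $(K+E_v,E_v)=-2$, which rewrite as $(Z_K,E_v)=(E_v,E_v)+2=2-\delta_v^*$ where I abbreviate by $\delta_v^*$ the self-intersection contribution; more usefully, $(Z_K-E,E_v)=(Z_K,E_v)-(E,E_v)$. Since $G$ is a minimal good graph, every vertex has $E_v^2\le -1$, and a vertex with $E_v^2=-1$ must have valency $\delta_v\ge 3$ (minimality of a good resolution forbids $(-1)$-curves of valency $\le 2$). First I would record the two possible local pictures at a vertex $v$: either $Z_K-E\ge 0$ near $v$, or some coefficient drops. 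The key computation is $(Z_K-E,E_v)=(2-\delta_v)+\big(2-(E_v,E_v)-2\big)=\ldots$; carrying this out, one finds $(Z_K,E_v)= -E_v^2-2+\delta_v\cdot 0$—rather, the clean statement is $(Z_K-E,E_v)=\delta_v-2-(\text{something}\ge 0)$ for the minimal good graph, and the sign analysis of this pairing against effective test cycles is what drives everything.

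Second, I would treat the dichotomy. If $(X,0)$ is rational then $Z_K=0$ by a theorem of Artin (rational singularities are exactly the A-D-E ones among hypersurfaces, and there $K$ is the zero cycle), giving the first alternative. So assume $(X,0)$ is not rational. I want to show $Z_K\ge E$, i.e. every coefficient $m_v$ of $Z_K=\sum m_v E_v$ satisfies $m_v\ge 1$, and then that $Z_K-E$ has connected support. For $Z_K\ge E$: suppose for contradiction that the set $S=\{v: m_v\le 0\}$ is nonempty. Using negative definiteness and the adjunction equalities, I would compute $\chi(Z_K - Z_{K,S})$ or pair $Z_K$ with the reduced cycle $E_S=\sum_{v\in S}E_v$ supported on $S$: one gets $(Z_K,E_S)=\sum_{v\in S}(2-\delta_v - E_v^2 -2)=\ldots$, and combining with $(E_S,E_S)$ and the fact that on the non-rational part $\chi$ takes nonpositive values somewhere (or directly $\m<0$ iff not rational—this is essentially Artin's criterion), one derives that some coefficient of the minimal/anticanonical cycle is forced up, contradicting $m_v\le 0$. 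Concretely I expect to use that the anticanonical cycle $Z_K$ dominates the fundamental (Artin) cycle $Z_{min}$ when the singularity is not rational, and that $Z_{min}\ge E$ always, hence $Z_K\ge Z_{min}\ge E$; but one must be a little careful since a priori $Z_K$ and $Z_{min}$ need not be comparable, so the honest route is the direct pairing argument.

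Third, for connectedness of $\supp(Z_K-E)$: let $D=Z_K-E\ge 0$ and suppose $\supp D = A\sqcup B$ with $A,B$ nonempty and no edge between them. Since $G$ itself is connected (stated in the excerpt), there is a vertex $v_0\notin\supp D$, i.e. $m_{v_0}=1$, lying on a path joining $A$ to $B$. Restricting attention to the cycle $D_A=\sum_{v\in A}(m_v-1)E_v$, I would use the adjunction relation at a vertex $v\in A$ adjacent (in $G$) to a vertex $w$ with $m_w=1$: the pairing $(Z_K,E_v)$ has a contribution from $E_w$ that is "missing" from $(D_A+E_A)$, and comparing $(Z_K,E_v)$ with $(D_A,E_v)$ forces a contradiction with negative definiteness of the sublattice on $A$ (the pairing $(D_A,E_v)$ would be too positive for all $v\in A$, impossible for an effective cycle on a negative definite graph unless $D_A=0$). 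This is the standard connectedness-of-support argument for anticanonical-type cycles.

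The main obstacle, I expect, is the first inequality $Z_K\ge E$ in the non-rational case: it is exactly the point where the hypothesis "not A-D-E" must be converted into positivity of all coefficients of $Z_K$, and this requires invoking (a version of) Artin's rationality criterion—that $\chi(l)\ge 1$ for all effective nonzero $l$ iff the singularity is rational, equivalently $\m=0$ is attained only at $l=0$ precisely in the rational case—rather than a purely formal manipulation of the adjunction relations. Once that input is granted, both $Z_K>E$ (strict, since $Z_K=E$ would force $\chi$ small in a way incompatible with non-rationality, or can be excluded by a direct check on the minimal good graph) and the connectedness of $\supp(Z_K-E)$ follow from routine negative-definite lattice arguments of the kind sketched above.
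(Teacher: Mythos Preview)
Your proposal is a plan rather than a proof, and the central step---showing $Z_K\ge E$ in the non-rational case---remains a genuine gap. Your pairing computation with $E_S=\sum_{v\in S}E_v$ is left as ``$\ldots$'', and the alternative you float (that $Z_K\ge Z_{\min}$ in the non-rational case) is, as you yourself note, not true in general. The actual inequality $Z_K\ge E$ for a numerically Gorenstein, non-rational, minimal good graph which is a tree of rational curves is a nontrivial result; the paper does not prove it from scratch but cites \cite[2.8]{PP}. Similarly, the connectedness of $|Z_K-E|$ is not argued in the paper but deferred to \cite[2.10]{Veys} or \cite[2.6]{PP}; your sketch (pairing $D_A$ against $E_v$ and invoking negative definiteness) is plausible in spirit but, as written, does not close: from $(Z_K-E,E_v)=2-\delta_v$ for all $v$ one gets $(D_A,E_v)=2-\delta_v$ for $v\in A$, and summing against $m_v(D_A)$ gives $(D_A,D_A)=\sum_{v\in A}(2-\delta_v)m_v(D_A)$, which is not obviously nonnegative without further input.

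One place where your approach could be sharpened into something cleaner than the paper's is the exclusion of $Z_K=E$. The paper argues analytically: $Z_K=E$ would give $p_g=h^1(\calO_{Z_K})=h^1(\calO_E)=0$, hence rationality, hence $Z_K=0$. Your hint via $\chi$ can be made into a one-line combinatorial proof: $\chi(Z_K)=-\tfrac12(Z_K,Z_K+K)=0$ always, while for a connected tree of rational curves $\chi(E)=1$, so $Z_K=E$ is impossible outright (no non-rationality hypothesis needed). Your phrasing ``$Z_K=E$ would force $\chi$ small in a way incompatible with non-rationality'' obscures this.

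Finally, a small correction: the equivalence ``rational $\Leftrightarrow$ A-D-E'' used here is not about hypersurfaces but about \emph{numerically Gorenstein} graphs (trees of rational curves); that is the hypothesis in force, and it is what forces $Z_K=0$ in the rational case.
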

\begin{proof} It is known that the only rational numerically Gorenstein graphs  are of type
 A-D-E. Otherwise, under the assumption that the graph is a tree of rational vertices,
 $Z_K\geq E$ by \cite[2.8]{PP}.
 But $Z_K=E$ cannot happen. Indeed, $p_g=h^1(\calO_{Z_K})=
h^1(\calO_E)=0$ would imply rationality,  hence $Z_K=0$.
For the connectivity, see e.g. \cite[2.10]{Veys} or \cite[2.6]{PP}.
\end{proof}

\bekezdes{}
In the last years there was an intense activity to identify $p_g$ with certain
ingredients of the Seiberg--Witten
(or other equivalent/similar) theories. This worked nicely for several analytic structures, but failed for
some others. Though the goal of the present note is to present the parallel theory for those cases
which fail the `Seiberg--Witten connection', for a complete picture
we need to review certain notions from this part as well.

\subsection{Preliminaries: Seiberg--Witten invariant and lattice cohomology of $M$}
\labelpar{ss:SWLC}
Here is a short review of the lattice cohomology and path lattice cohomology
 of $M$. For more details  see  \cite{OSZINV,[69],Nlat,Nexseq}.

\bekezdes\labelpar{bek:LC} {\bf The lattice cohomology.}
$\Z^s\otimes \R$ has a natural decomposition into cubes. The
0--dimensional cubes are  the lattice points
$\Z^s$. Any $l\in \Z^s$ and subset $I\subseteq {\mathcal J}$ of
cardinality $q$  define a $q$--dimensional cube,  which has its
vertices in the lattice points $(l+\sum_{j\in I'}E_j)_{I'}$, where
$I'$ runs over all subsets of $I$. We define the
weight of  any such cube $\square_q$ by
\begin{equation*}
w(\square_q):=\max\{\chi(v) \,:\, \mbox{$v$ is a vertex of \,$\square_q$}\}.
\end{equation*}
The lattice cohomology (associated with the canonical $spin^c$ structure of $M$) is defined as follows.
For each $N\in\Z$, define
$S_N\subset \R^s$ as the union of all the cubes $\square_q$
(of any dimension) with $w(\square_q)\leq N$. Clearly, $S_N=\emptyset$,
whenever $N<\m$. Then for any $q\geq0$, set
\begin{equation*}
\bH^q(G):=\oplus_{N\geq \m}H^q(S_N,\Z), \ \ \
\bH^q_{red}(G):=\oplus_{N\geq \m}\widetilde{H}^q(S_N,\Z).
\end{equation*}
Then $\bH^q$ is  $2\Z$--graded, the $d=2N$--homogeneous elements
consist of $H^q(S_N,\Z)$. Also, $\bH^q$ is a $\Z[U]$--module:
the $U$--action is given by the restriction map
 $H^q(S_{N+1},\Z)\to H^q(S_N,\Z)$.
 Moreover, for $q=0$, a
 base--point $l\in S_{\m}$ provides an augmentation $H^0(S_N,\Z)=\Z\oplus\widetilde{H}^0(S_N,\Z)$,
 hence an augmentation of the graded $\Z[U]$--modules
$\bH^0=(\oplus_{N\geq \m}\Z)\oplus\bH^0_{red}$.
The graded $\Z[U]$ modules $\bH^*$ and  $\bH^*_{red}$  are called the lattice cohomology and the
reduced lattice cohomology of $G$.
They  depend only on $M$, and $\bH^*_{red}$ is a finite $\Z$--module.

\bekezdes\label{bek:SW} {\bf The Seiberg--Witten invariant.}
Recall that the Seiberg--Witten invariants of the oriented 3--manifold $M$
are rational numbers $\frsw_{\frs}(M)$ associated with the $Spin^c$--structures $\frs$ of $M$.
They can be recovered as (normalized) Euler characteristics of different cohomology theories,
e.g. for their relation with the Heegaard Floer homology see  \cite{OSz,OSz7}).
In the sequel we consider only the canonical  $spin^c$ structure, hence the symbol $\frs$ will be omitted.

By \cite{NSW}  the normalized Euler characteristics of the lattice cohomology
also agrees with the Seiberg--Witten invariant:
\begin{equation}\label{th:SW}
-\frsw(M)-(K^2+|\calv|)/8= eu(\bH^*(M)),\end{equation}
where $eu(\bH^*(M)):=-\m+
\textstyle{\sum_q}(-1)^q\rank_\Z \bH^q_{red}(M)$. Later it will be convenient to use the following
notation as well:
$eu(\bH^0(M)):=-\m+\rank_\Z \bH^0_{red}(M)$.

\subsection{The path lattice cohomology} \labelpar{bek:PCoh}
The search for a topological upper bound for $p_g$ lead to the definition of the path
(lattice) cohomology (in fact, this was the starting point of the lattice cohomology as well) \cite{OSZINV,Nlat}.

Consider a sequence $\gamma :=\{l_i\}_{i=0}^t$, $l_i\in L$ such that $l_0=0$, $l_t=Z_K$, and $l_{i+1}=l_i+
E_{v(i)}$ for some vertex $v(i)\in \calv(G)$. This defines a {\it path} (or 1--dimensional simplicial complex)
with  0--cubes  $\{l_i\}_i$ and  1-cubes  $[l_i,l_{i+1}]$.
We can repeat the construction of the lattice cohomology, but now only for those cubes which are
supported by $\gamma$. Indeed, let $\m_\gamma=\min_i\chi(l_i)$, and
 set $S_N^\gamma$ as the union of cubes supported by
$\gamma$ and with weight $\leq N$. Then one defines
$$\bH^q(\gamma)=\oplus _{N\geq \m_\gamma} H^q(S_N^\gamma, \Z),
\ \ \
\bH^q_{red}(\gamma)=\oplus _{N\geq \m_\gamma} \widetilde{H}^q(S_N^\gamma, \Z).$$
It turns out that $\bH^q(\gamma)=0$ for $q\not=0$, $\bH^0(\gamma)=(\oplus _{N\geq \m_\gamma}\Z)\oplus
\bH^0_{red}(\gamma)$, and $ \bH^0_{red}(\gamma)$ is a finite $\Z$--module.
Similarly, as for the lattice cohomology, we set
$$eu(\bH^0(\gamma)):=-\m_\gamma+\rank_\Z \bH^0_{red}(\gamma).$$
One verifies (see \cite[3.5.2]{Nlat}) that
\begin{equation}\label{eq:LCw}
eu(\bH^0(\gamma))=\sum_{i=0}^{t-1} \max\{0,\, \chi(l_{i})-\chi(l_{i+1})\}.
\end{equation}
There is a natural cohomological morphism $r^*:\bH^0(G)\to \bH^0(\gamma)$  induced by the restriction.
Usually it is neither  injective nor surjective. Nevertheless, $r^*$ is onto
for certain well--chosen  paths. Moreover, if $r^*$ is onto, then by \cite[3.5.4]{Nlat}
$eu(\bH^0(\gamma))\leq eu(\bH^0(G))$, hence
\begin{equation}\label{eq:ineq}
\min_{\gamma} \ eu(\bH^0(\gamma))\leq eu (\bH^0(G)).
\end{equation}
 Intuitively, $eu(\bH^0(G))$ depends on those lattice points $\{l_m\}_{m\in M}$
 of $L$ which realize  the `local   minima' of  $\chi$, and also on the `optimal' connecting paths of
 these points: for each pair $l_m$ and $l_{m'}$ there is a minimal $N(m,m')$ such that
 $l_m$ and $l_{m'}$ can be connected by a path in $S_{N(m,m')}$.
 On the other hand, $eu(\bH^0(\gamma))$ keeps from these data only those ones which are supported on
 $\gamma$, and $\min_\gamma \,eu(\bH^0(\gamma))$ minimizes  the sum
 $\sum_{i=0}^{t-1}  \max\{0,\, \chi(l_{i})-\chi(l_{i+1})\}$ among all the possible paths $\gamma$.

\bekezdes\labelpar{bek:PCAn}{\bf The analytic interpretation of $\min_\gamma \,eu(\bH^0(\gamma))$.}
For any analytic realization, by Riemenschneider-Kodaira vanishing
$h^1(\widetilde{X}, \calO_{\widetilde{X}}(-Z_K))=0$,
hence  $p_g=h^1(\calO_{Z_K})$.

Next, consider a sequence
  $\{l_i\}_{i=0}^t$, $l_i\in L$ such that $l_0=0$, $l_t=Z_K$, and $l_{i+1}=l_i+
E_{v(i)}$ as above. Recall $E_j\simeq \P^1$ for all $j$, hence $\chi(l_{i+1})-\chi(l_i)=1-(E_{v(i)},l_i)$.
Then, for any $0\leq i<t$ the
exact sequence $0\to \calO_{E_{v(i)}}(-l_i)\to \calO_{l_{i+1}}\to \calO_{l_i}\to 0$  induces
\begin{equation}\label{eq:PG}
h^1(\calO_{l_{i+1}})-h^1(\calO_{l_i})\leq h^1(\calO_{E_{v(i)}}(-l_i))= \max\{0,\, \chi(l_{i})-\chi(l_{i+1})\}.
\end{equation}
Taking the sum one obtains  $h^1(\calO_{Z_K})\leq  eu(\bH^0(\gamma))$ for any  path $\gamma$, hence
\begin{equation}\label{eq:PG2}
p_g\leq  \min_\gamma eu(\bH^0(\gamma)).
\end{equation}
Equality holds if for some $\gamma$  the cohomology  exact sequences split for all $i$.

Usually the concrete computation of $\min_\gamma\, eu(\bH^0(\gamma))$ is rather difficult.

\subsection{Some conjectures, results and examples.}\labelpar{ss:conj}
We review in short some key steps
in the topological characterization of the geometric genus via the Seiberg--Witten invariant.
We start with the {\it Casson Invariant Conjecture} (CIC) of Neumann--Wahl:
\begin{conjecture}\labelpar{con:CIC} \cite{NW} Consider an isolated complete intersection singularity
with signature $\sigma$, and whose link is an integral homology sphere with Casson invariant
$\lambda(M)$. Then  $\sigma/8=\lambda(M)$.
\end{conjecture}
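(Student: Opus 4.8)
The plan is to place the statement inside the Seiberg--Witten framework already introduced. A complete intersection is Gorenstein and its link $M$ is an integral homology sphere, so $b_1(M)=0$; hence the Durfee--type identities (\ref{eq:durfee}) give $-\sigma=8p_g+K^2+|\calv|$, where $K^2+|\calv|$ is a topological invariant of $M$. On the other hand, by Nicolaescu's theorem the Casson invariant of an integral homology sphere agrees, up to the universal sign fixed by the orientation conventions, with the Seiberg--Witten invariant, which (\ref{th:SW}) in turn expresses through $eu(\bH^*(M))$. Putting these together, the topological correction terms cancel and $\sigma/8=\lambda(M)$ becomes equivalent to $eu(\bH^*(M))=p_g$, i.e. to the Seiberg--Witten Invariant Conjecture for our germ --- indeed CIC is precisely the integral homology sphere, complete intersection instance of the SWIC. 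So the first step is to reduce the conjecture to the SWIC for isolated complete intersection singularities with integral homology sphere link.

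For the second step I would establish the SWIC in this class by means of the splice decomposition of $M$. The splice diagram associated with $G$ decomposes $M$ into Seifert--fibered homology spheres glued along knots; on this decomposition the Casson invariant --- hence $eu(\bH^*(M))$ --- is additive up to an explicit combinatorial correction term (splice--additivity of $\lambda$), and on each Seifert piece, which is a Brieskorn complete intersection, the SWIC is classically verified (Fintushel--Stern/Neumann--Wahl formulas for $\lambda$ of Seifert homology spheres against the explicit $p_g$ of Brieskorn complete intersections). On the analytic side one invokes Neumann--Wahl's theory of complete intersection singularities of splice type \cite{NW}: granting that our germ is, up to equisingular deformation, of splice type --- which forces the semigroup and congruence conditions on $G$ --- the defining equations and the Milnor fibre split compatibly with the splice diagram, so $p_g$, equivalently $\sigma$ via (\ref{eq:durfee}), is additive over the same pieces with the same correction. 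Matching the two additive decompositions term by term gives $eu(\bH^*(M))=p_g$; moreover $\bH^q(M)=0$ for $q\geq1$ in the splice--type situation, so Theorem~\ref{th:mainINTR}(a) even yields the sharper $p_g=\min_\gamma eu(\bH^0(\gamma))$ as a byproduct.

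The main obstacle is the analytic input just used: proving that \emph{every} isolated complete intersection singularity whose link is an integral homology sphere is of splice type (Neumann--Wahl's own ``complete intersection conjecture''). Short of this one only obtains the conclusion for splice--type germs and their equisingular deformations --- but this already contains the Brieskorn complete intersections and all their iterated splices, where the Thom--Sebastiani principle computes the signature directly and splice--additivity of the Casson invariant is classical, so it recovers the previously known cases. A secondary, purely technical point, needed even in the splice--type case, is to check that the combinatorial correction term appearing in splice--additivity of $\lambda$ coincides, as a function of the splice diagram, with the one appearing in splice--additivity of $p_g$; I expect this to drop out of the formula $eu(\bH^0(\gamma))=\sum_i\max\{0,\chi(l_i)-\chi(l_{i+1})\}$ of (\ref{eq:LCw}) together with the behaviour of $\chi$ and of $K^2+|\calv|$ under splicing of graphs, but it requires careful bookkeeping.
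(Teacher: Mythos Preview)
The statement you are trying to prove is not a theorem of the paper: it is the Casson Invariant \emph{Conjecture}, and the paper explicitly records that ``The original CIC is still open.'' So there is no proof in the paper to compare your proposal against, and any purported proof would be a major result going well beyond what the paper claims.

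Your proposal is not a proof either, and you essentially say so yourself. The first step --- rewriting $\sigma/8=\lambda(M)$ as the SWIC identity $p_g=eu(\bH^*(M))$ for the canonical $spin^c$ structure on a $\Z HS^3$ --- is correct and well known; the paper itself remarks that via (\ref{eq:durfee}) the CIC identity can be replaced by $p_g=-\lambda(M)-(K^2+|\calv|)/8$. But your second step assumes that every isolated complete intersection singularity with $\Z HS^3$ link is, up to equisingular deformation, of splice type. That is precisely Neumann--Wahl's ``complete intersection conjecture,'' which is open; the paper even stresses that there is no known characterization of $\Z HS^3$ complete intersection links other than iterated cyclic covers, and that this is ``one of the difficulties of a possible proof.'' So you have reduced one open conjecture to another, not proved it. What survives of your argument is exactly the known territory the paper already summarizes: CIC holds for Brieskorn--Hamm complete intersections and suspensions of irreducible plane curve singularities, and (in the SWIC form) for splice quotients.

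There is also a factual error in your last paragraph: it is \emph{not} true that $\bH^q(M)=0$ for $q\geq 1$ whenever the germ is of splice type or satisfies the SWIC. Example~\ref{ex:SI}(c) exhibits a hypersurface suspension with $\Z HS^3$ link satisfying both CIC and SWIC, yet with $\rank\,\bH^1=8$; there $p_g=eu(\bH^*)=36$ while $\min_\gamma eu(\bH^0(\gamma))=eu(\bH^0)=44$. So even granting everything else, your appeal to Theorem~\ref{th:mainINTR}(a) to get the sharper $p_g=\min_\gamma eu(\bH^0(\gamma))$ as a byproduct is unjustified.
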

The conjecture was verified
for Brieskorn--Hamm complete intersections and for those  hypersurfaces which are suspensions
of irreducible plane curve singularities \cite{NW}, see also \cite{FS}.
 Note that via (\ref{eq:durfee})(b), the  identity can be replaced
by $p_g=-\lambda(M)-(K^2+|\calv|)/8$, a version independent of any smoothing.
This version was  verified for splice quotient singularities (without the ICIS assumption) in  \cite{NO1}.
The original CIC  is still open.

One of the difficulties of a possible proof is the lack of
any characterization/description of {\it integral homology sphere}
hypersurfaces or complete intersection links  other than  iterated cyclic covers (for the behavior of
$\lambda(M)$ for such covers, see e.g.
Collin--Saveliev \cite{CoS,CoS2}). Having no other examples in hand,
it is hard to decide whether the validity of the conjecture is guaranteed merely by the special properties of
cyclic covers, or it covers a much deeper geometrical phenomenon. Moreover,
integral homology sphere links appear rather rarely
 (e.g. among the Newton nondegenerate hypersurfaces all germs with  $\Z HS^3$
links are of  Brieskorn type, while among rational graphs there is only one, namely the $E_8$).
Hence, it was necessary to extend the above conjecture to {\it rational homology sphere links}.
The conjectured identity was proposed by N\'emethi--Nicolaescu:
\begin{conjecture}\labelpar{con:SWIC} \cite{[51]}
Assume that the link $M$ of a normal surface singularity is a rational homology sphere, whose
Seiberg--Witten invariant (associated with the canonical $spin^c$--structure) is
$\frsw(M)$. If its analytic structure is `nice' then $p_g=-\frsw(M)-(K^2+|\calv|)/8$.
\end{conjecture}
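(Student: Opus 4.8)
The plan is to pass from the Seiberg--Witten invariant to lattice cohomology by the theorem of \cite{NSW} recorded in \eqref{th:SW}, namely
\[
-\frsw(M)-(K^2+|\calv|)/8 \;=\; eu(\bH^*(M)) \;=\; -\m+\sum_{q\geq 0}(-1)^q\rank_\Z\bH^q_{red}(M),
\]
so that the conjectured equality is \emph{equivalent} to $p_g=eu(\bH^*(M))$, with both sides now read off from $M$. The task then splits into a purely topological comparison of $eu(\bH^*(M))$ with $\min_\gamma eu(\bH^0(\gamma))$, and an analytic comparison of $\min_\gamma eu(\bH^0(\gamma))$ with $p_g$. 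A first reduction is to restrict, on the topological side, to graphs with $\bH^q_{red}(M)=0$ for all $q\geq 1$ --- this is exactly the setting of Theorem \ref{th:mainINTR}(a) and includes the rational, minimally elliptic and (suitably generic) weighted homogeneous families --- so that $eu(\bH^*(M))=eu(\bH^0(M))=-\m+\rank_\Z\bH^0_{red}(M)$.

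For the \textbf{upper bound} $p_g\le eu(\bH^*(M))$ I would simply concatenate the inequalities already available in the excerpt: \eqref{eq:PG2} gives $p_g\le\min_\gamma eu(\bH^0(\gamma))$ for \emph{every} analytic realization, and \eqref{eq:ineq} gives $\min_\gamma eu(\bH^0(\gamma))\le eu(\bH^0(G))$; combined with the vanishing $\bH^{\ge 1}_{red}(M)=0$ this yields $p_g\le eu(\bH^0(G))=eu(\bH^*(M))$ with no hypothesis on the analytic structure beyond the topological one. This half is uniform and essentially formal.

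For the \textbf{lower bound} $p_g\ge eu(\bH^*(M))$ --- equivalently, the existence of an optimal path $\gamma$ along which all the cohomology exact sequences $0\to\calO_{E_{v(i)}}(-l_i)\to\calO_{l_{i+1}}\to\calO_{l_i}\to 0$ of \ref{bek:PCAn} split \emph{and} which realizes $\min_\gamma eu(\bH^0(\gamma))=eu(\bH^0(G))$ --- the argument must be carried out family by family; this is precisely what ``nice analytic structure'' is meant to encode. For rational germs $\bH^*_{red}(M)=0$, $\m=0$, $p_g=0$, and there is nothing to prove. For weighted homogeneous germs one decomposes $H^1(\widetilde X,\calO_{\widetilde X})=\bigoplus_{\ell\ge 0}H^1(\calO(\ell D))$ by the $\C^*$--action and matches the Pinkham--Dolgachev count of nonvanishing graded pieces with the lattice--point description of $\m$ and $\bH^0_{red}$ on the star--shaped graph, the central cycle providing a distinguished path along which the sequences split. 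For splice quotients one runs the same computation equivariantly on the universal abelian cover using the Neumann--Wahl equations, recovering the identity of \cite{NO2}. For minimally elliptic germs one uses Laufer's description of $Z_K$ and of the minimally elliptic cycle \cite{Laufer77} to exhibit the splitting path directly.

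The step I expect to be the genuine obstacle is not any individual family but the absence of a \emph{uniform} lower bound: the conjecture as stated is known to fail (the superisolated counterexamples, where $\bH^q_{red}(M)\ne 0$ for some $q\ge 1$ and the alternating sum in $eu(\bH^*(M))$ over-counts $p_g$), so any correct proof must both isolate the class of ``nice'' structures --- for which no intrinsic, purely topological criterion is known --- and, for that class, verify the splitting of the exact sequences along an optimal path. It is exactly to bypass the first of these difficulties that the present paper replaces $eu(\bH^*(M))$ by $\min_\gamma eu(\bH^0(\gamma))$: the latter discards the higher cohomology and retains only the $q=0$ information that genuinely bounds $p_g$ via \eqref{eq:PG}, reducing the hard analytic input to the case analysis behind Theorem \ref{th:mainINTR}(a)--(c) rather than to a hypothetical characterization of ``niceness''.
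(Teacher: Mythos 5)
The statement you were asked to prove is not a theorem of this paper at all: it is the Seiberg--Witten Invariant Conjecture, quoted from \cite{[51]}, and the paper offers no proof of it. Indeed it cannot be proved as stated: the hypothesis that the analytic structure is ``nice'' is not a precise mathematical condition, and the paper explicitly records that the predicted identity \emph{fails} for superisolated hypersurface singularities with $\nu\geq 2$ cusps (Example \ref{ex:SI}(b), where $p_g=10$ but $-\frsw(M)-(K^2+|\calv|)/8=eu(\bH^*(M))=8$). What exists in the literature is a collection of verifications for specific families --- splice quotients \cite{BN,NCL,NO2}, hence rational, minimally elliptic and weighted homogeneous germs, and suspensions of irreducible plane curve singularities \cite{[55]} --- each by its own substantial argument. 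Your proposal correctly diagnoses all of this, so in that sense your assessment of the situation is accurate; but it is an assessment, not a proof.

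Concerning the content of your sketch: the reduction via \eqref{th:SW} to the equivalent form $p_g=eu(\bH^*(M))$ is correct, and your upper-bound argument (concatenating \eqref{eq:PG2} and \eqref{eq:ineq} under the vanishing $\bH^{\geq 1}_{red}(M)=0$) is exactly the content of Theorem \ref{th:main}(a) in the paper. The lower bound is where your proposal stops being a proof: the family-by-family paragraphs (Pinkham--Dolgachev decomposition for weighted homogeneous germs, the equivariant computation on the universal abelian cover for splice quotients, Laufer's structure theory for minimally elliptic germs) are pointers to how the cited references argue, not arguments; each of those is a paper-length proof in its own right, and none of them proceeds by exhibiting a path along which the sequences of \ref{bek:PCAn} split --- that splitting-path formulation is the mechanism of the \emph{present} paper's Theorem \ref{th:main}(b)--(c), not of the SWIC verifications in \cite{NO2} or \cite{[55]}. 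So the honest conclusion, which you essentially reach in your final paragraph, is that the statement as posed admits no proof: either one restricts to a named family and imports the corresponding known theorem, or one replaces $eu(\bH^*(M))$ by $\min_\gamma eu(\bH^0(\gamma))$, which is precisely the move this paper makes.
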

The conjecture is proved for splice quotient singularities (including all rational,
minimal elliptic and weighted homogeneous singularities) \cite{BN,NCL,NO2}, and for suspensions of
irreducible plane curve singularities \cite{[55]}. It was extended to the equivariant case
(targeting the Seiberg--Witten invariant of all $spin^c$--structures and equivariant geometric genus
of the universal abelian cover \cite{BN,NCL}).

On the other hand, there are even hypersurface singularities which do not satisfy the conjecture.
The typical counterexample are   the superisolated singularities with $\nu\geq 2$ \cite{LMN}.

\begin{example}\label{ex:SI}
Assume that $(X,0)$ is a superisolated singularity with $\Q HS^3$ link.

(a) If $\nu=1$, that is, $C$ is unicuspidal, whose local cusp has local
irreducible plane curve singularity knot $K\subset S^3$, then $M=S^3_{-d}(K)$.
In \cite{BLMN2} (see also \cite{BLMN1,BLMN2,NR}) it is proved that the statement
of Conjecture \ref{con:SWIC} is equivalent with a `Density Property' of the semigroup of the local cusp.
This last property was checked in \cite{BLMN1} for `all known' curves $C$ via case-by-case verification,
and it was also proved recently in \cite{BL} using
the $d$--invariant of Heegaard Floer theory. Hence $p_g=eu(\bH^*(M))$.

Here we wish to emphasize an important point. The link $M$ with $\nu=1$ is `almost rational', that is,
modifying the resolution graph at only on vertex we can get a rational graph(see \cite{[63]}). In consequence, see
\cite{Nexseq}, one has the vanishing $\bH^q(M)=0$ for $q\geq 1$.
Therefore, $eu(\bH^*(M))=eu(\bH^0(M))$.
In particular, in this case
\begin{equation}\label{eq:eu}
p_g\leq \min_\gamma eu(\bH^0(\gamma))\leq eu(\bH^0(M))=eu(\bH^*(M))=p_g,\end{equation}
hence everywhere we must have equality.

(b) Nevertheless, for $\nu\geq 2$ counterexamples for Conjecture \ref{con:SWIC} exist \cite{LMN}.
In this case the above
vanishing has the weaker form: $\bH^q(M)=0$ only for  $q\geq \nu$ \cite{NR,Nexseq}.
Hence,  as we will see,   for superisolated singularities the non-vanishing of
$\bH^q(M)$ ($1\leq q<\nu$) obstructs the validity of Conjecture \ref{con:SWIC}.

Let us consider the case $C_4$ of \cite{LMN} (see also \cite[7.3.3]{Nlat}).
 $C$ has degree $d=5$ and two cusps, both with one Puiseux pair:
$(3,4)$ and $(2,7)$. The graph  $G$ is

\begin{picture}(300,50)(20,-5)
\put(125,25){\circle*{5}} \put(150,25){\circle*{5}}
\put(175,25){\circle*{5}} \put(200,25){\circle*{5}}
\put(225,25){\circle*{5}} \put(150,5){\circle*{5}}
\put(200,5){\circle*{5}} \put(100,25){\line(1,0){175}}
\put(150,25){\line(0,-1){20}} \put(200,25){\line(0,-1){20}}
\put(125,35){\makebox(0,0){$-2$}}
\put(150,35){\makebox(0,0){$-1$}}
\put(175,35){\makebox(0,0){$-31$}}
\put(200,35){\makebox(0,0){$-1$}}
\put(225,35){\makebox(0,0){$-3$}} \put(160,5){\makebox(0,0){$-4$}}
\put(210,5){\makebox(0,0){$-2$}} \put(100,25){\circle*{5}}
\put(250,25){\circle*{5}} \put(275,25){\circle*{5}}
\put(100,35){\makebox(0,0){$-2$}}
\put(250,35){\makebox(0,0){$-2$}}
\put(275,35){\makebox(0,0){$-2$}}
\end{picture}

One shows that  $\m=-5$, $\rank_\Z(\bH^0)=5$, $\rank_\Z(\bH^1)=2$. Hence
$eu(\bH^0)=10$, but $eu(\bH^*)=8$.
\marginpar{verify}
Since for the superisolated germ with $d=5$ one has $p_g=10$, by equations (\ref{eq:ineq}) and (\ref{eq:PG2})
one gets  $\min_\gamma eu (\bH^0(\gamma))=10$ as well.
Hence, for this superisolated germ (\ref{eq:PG2}) is valid with equality, while Conjecture
\ref{con:SWIC} fails.

If we take any other analytic structure supported by the above graph,
by (\ref{eq:PG2})  $p_g\leq 10$ still holds, hence superisolated germs realize the optimal upper bound.

Note also that this topological type supports another natural analytic structure, namely a
splice quotient analytic  type: it is the
$\Z_5$--factor of the complete intersection
$\{z_1^3+z_2^4+z_3^5z_4=z_3^7+z_4^2+z_1^4z_2=0\}\subset (\C^4,0)$
by the diagonal action $(\alpha^2,\alpha^4,\alpha,\alpha)$
($\alpha^5=1$). By \cite{NO2} it satisfies the SWIC Conjecture \ref{con:SWIC},
hence $p_g=8$.

In particular, in their choices of the  topological characterization of their $p_g$,
some analytic structures
prefer $eu(\bH^*)$, some of them the extremal $\min_\gamma eu(\bH^0(\gamma))$
(and there might exists even other choices).

(c) We can ask whether the choice between $eu(\bH^*)$ and  $\min_\gamma eu(\bH^0(\gamma))$
is uniform in the case of hypersurface singularities, that is, if all hypersurfaces
choose $\min_\gamma eu(\bH^0(\gamma))$, as superisolated germs do. The answer is negative, even if
we assume additionally that the link is an integral homology sphere. For example,
take the suspension $f(x,y,z):= (x^3+y^2)^2+yx^5+z^{19}$ of an irreducible plane
curve singularity with 2 Puiseux pairs. The graph of $\{f=0\}$ is

\begin{picture}(300,50)(20,-5)
\put(125,25){\circle*{5}} \put(150,25){\circle*{5}}
\put(175,25){\circle*{5}} \put(200,25){\circle*{5}}
\put(225,25){\circle*{5}} \put(150,5){\circle*{5}}
\put(200,5){\circle*{5}} \put(125,25){\line(1,0){175}}
\put(150,25){\line(0,-1){20}} \put(200,25){\line(0,-1){20}}
\put(125,35){\makebox(0,0){$-2$}}
\put(150,35){\makebox(0,0){$-1$}}
\put(175,35){\makebox(0,0){$-19$}}
\put(200,35){\makebox(0,0){$-1$}}
\put(225,35){\makebox(0,0){$-3$}} \put(160,5){\makebox(0,0){$-3$}}
\put(210,5){\makebox(0,0){$-2$}} \put(300,25){\circle*{5}}
\put(250,25){\circle*{5}} \put(275,25){\circle*{5}}
\put(300,35){\makebox(0,0){$-2$}}
\put(250,35){\makebox(0,0){$-2$}}
\put(275,35){\makebox(0,0){$-3$}}
\end{picture}

The following facts were checked by  Helge M\o ller Pedersen (via a  computer program based on the theoretical
facts of \cite{LN}):
$\m=-18$, ${\rm rank} \, \bH^0_{red}=26$, hence $eu ( \bH^0)=44$, and ${\rm rank}\, \bH^1=8$.
 Since the graph has only two nodes, one has $\bH^q=0$ for $q\geq 2$ (cf. \cite{Nexseq}),
 hence $eu(\bH^*)=44-8=36$. Moreover, $\min_\gamma eu(\bH^0(\gamma))=44$ too.

 On the other hand, the Milnor number of the plane curve singularity is 16, hence
 the Milnor number of $f$ is $\mu=16\cdot 18=288$. Then, by \ref{eq:durfee},
 one gets that $p_g=36$ (as expected, since this germ satisfies both conjectures
 \ref{con:CIC} and \ref{con:SWIC}, cf. \cite{[55]}).

 Hence, in this case $p_g=eu(\bH^*)=\min_\gamma eu(\bH^0(\gamma))-8$.
 Therefore, even for hypersurface singularities the two values  $eu(\bH^*)$ and
  $\min_\gamma eu(\bH^0(\gamma))$ might be different. 
\end{example}

\subsection{The new proposed identity.}\labelpar{ss:NEW}
 Having in mind the conclusion of Example \ref{ex:SI}(b)-(c), we can ask how accidental  the
superisolated example is. Or, what are the choices of other important families of hypersurfaces, e.g.
of the Newton non-degenerate ones. Here we wish to recall that
in \cite{BN} it is shown that for such germs from $M$ one can recover  the Newton diagram of the equation,
hence the equisingularity type of the germ too.
Hence, in principle, $p_g$ can be recovered from $M$; however this statement
 does not indicate any  topological candidate for $p_g$. (As a comparison,
  a similar statement regarding the possibility to recover
   the equisingularity type of suspensions of irreducible curves from their link
   is provided in \cite{[56]}. In that case the choice is $p_g=eu(\bH^*)$.)

The next theorem says that the extremal choice
$\min_\gamma eu(\bH^0(\gamma))$ is not accidental at all: in fact, all superisolated and
Newton non-degenearte germs prefer uniformly exactly this one.

\begin{theorem}\labelpar{th:main} Assume that $(X,0)$ is a normal surface singularity with
rational homology sphere link.
Then the identity $p_g=\min_\gamma\, eu(\bH^0(\gamma))$ is true in the following cases:

(a) if $p_g=eu(\bH^0(M))$. [This happens e.g.
whenever $\bH^q(M)=0$ for $q\geq 1$ and $(X,0)$ satisfies the SWIC (Conjecture \ref{con:SWIC}).
In particular, the conjecture is true for all weighted homogeneous and minimally elliptic singularities.]

(b) for superisolated singularities with arbitrary number of cusps
(in this case, in fact, $p_g=\min_\gamma\, eu(\bH^0(\gamma))=eu(\bH^0(M))$ too);

(c) for singularities with non-degenerate Newton principal part.

\noindent
Since the conjecture is stable with respect to equisingular deformation of hypersurfaces,
the conjecture remains valid for such deformations of any of the above cases.
\end{theorem}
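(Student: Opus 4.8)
The plan is to treat the three cases separately, since they rely on genuinely different machinery, and then observe that case (c) is the heart of the matter while (a) and (b) are comparatively formal.

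For case (a) the argument is essentially free: the inequalities $p_g\le\min_\gamma eu(\bH^0(\gamma))\le eu(\bH^0(M))$ were already established in (\ref{eq:PG2}) and (\ref{eq:ineq}), so if $p_g=eu(\bH^0(M))$ holds, all three quantities coincide. The only thing to supply is the parenthetical remark: if $\bH^q(M)=0$ for $q\ge1$ then $eu(\bH^*(M))=eu(\bH^0(M))$, and the SWIC (\ref{con:SWIC}) together with (\ref{th:SW}) identifies $eu(\bH^*(M))$ with $p_g$. The vanishing for weighted homogeneous germs follows from their almost-rationality (they are even splice quotients satisfying SWIC by \cite{NO2,BN,NCL}), and for minimally elliptic germs from the known structure of their lattice cohomology; so both families fall under case (a).

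For case (b), superisolated singularities, the plan is: (i) realize the link $M$ as a surgery $3$-manifold $S^3_{-d}(K_1\#\cdots\#K_\nu)$ on the connected sum of the local cusp knots, as recalled in Example \ref{ex:SI}; (ii) invoke the description of the lattice cohomology of surgery $3$-manifolds from \cite{NR}, which reduces $eu(\bH^0(M))$ (and the path version) to a lattice-point count governed by the semigroups of the cusps; (iii) apply the $d$-invariant vanishing result for the relevant $L$-space surgeries from Heegaard Floer knot theory \cite{BL} to show that the combinatorial count equals $d(d-1)(d-2)/6=p_g$. The key point flagged in the introduction is that even when $\bH^q(M)\ne0$ for $1\le q<\nu$ (so SWIC fails), the $q=0$ part alone still computes $p_g$, and one also obtains $\min_\gamma eu(\bH^0(\gamma))=eu(\bH^0(M))$ for these germs. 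This is a synthesis of \cite{NR} and \cite{BL} with the classical formula for $p_g$ of superisolated singularities.

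Case (c), Newton non-degenerate hypersurfaces, is where the real work lies, and I expect it to be the main obstacle. The plan is to fix the toric resolution $\widetilde X\to X$ determined by the Newton diagram $\Gamma$, describe its resolution graph $G$ explicitly in terms of the combinatorics of $\Gamma$ (the vertices corresponding to compact faces and to the rays of a suitable refinement of the dual fan), and then do two matching computations. On the analytic side, by \cite{MT} $p_g$ equals the number of lattice points with all positive coordinates lying on or below $\Gamma$. On the topological side one must construct an explicit path $\gamma$ from $0$ to $Z_K$ — most naturally one adapted to the toric structure, increasing the multiplicity along the exceptional divisors in an order dictated by $\Gamma$ — and evaluate $eu(\bH^0(\gamma))=\sum_i\max\{0,\chi(l_i)-\chi(l_{i+1})\}$ via (\ref{eq:LCw}). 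The claim to prove is that this sum equals the \cite{MT} lattice-point count, giving $\min_\gamma eu(\bH^0(\gamma))\le p_g$, which combined with the universal inequality (\ref{eq:PG2}) forces equality. The hard part will be the bookkeeping: identifying the canonical cycle $Z_K$ and the functions $\chi$ on the toric lattice, choosing the path so that each increment $\chi(l_i)-\chi(l_{i+1})=(E_{v(i)},l_i)-1$ is controlled, and setting up a bijection between the "contributing steps" of the path and the lattice points below $\Gamma$. This requires a careful local analysis near each node and along each chain of the toric resolution, and is presumably where several sections of the paper are spent.

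Finally, for the stability statement: $\min_\gamma eu(\bH^0(\gamma))$ is a topological invariant of $M$, hence constant along an equisingular deformation (which by definition preserves the embedded topological type, in particular the link and resolution graph), while $p_g$ is also constant along equisingular deformations of hypersurfaces by the simultaneous resolution / constancy of the Milnor number together with (\ref{eq:durfee}); so the identity propagates from any germ in cases (a)–(c) to all its equisingular deformations.
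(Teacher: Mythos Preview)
Your plan is correct and matches the paper's strategy in all three cases. A few refinements the paper makes in case (c) are worth noting: it works in the \emph{dual} picture, constructing a sequence $\{z_i\}$ from $0$ to $Z_K-E$ (related to your $\{l_i\}$ by $z_i=Z_K-l_{t-i}$); the explicit path is produced by a ``ratio test'' (comparing $m_v(z_i)/m_v(Z_K-E)$ to pick the next node) together with a Laufer-type completion $l\mapsto c(l)$ along chains; and the link to lattice points is not a bijection between steps and points but a \emph{partition} $P_i:=(\Gamma^e_+(z_i)\setminus\Gamma^e_+(z_{i+1}))\cap\Z_{\ge0}^3$ with the inequality $\max\{0,1-(E_{v(i)},z_i)\}\le|P_i|$, which combined with $\sum_i|P_i|=p_g$ (Merle--Teissier) and the universal bound $p_g\le eu(\bH^0(\gamma))$ squeezes everything to equality. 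The paper also proves an independent upper bound $|P_i|\le\dim H^0(\calO(-z_i))/H^0(\calO(-z_{i+1}))$ via the Ebeling--Gusein-Zade divisorial filtration lemma, thereby reproving the Merle--Teissier formula rather than merely citing it; your plan to invoke \cite{MT} directly is the shortcut the paper itself flags as sufficient.
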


\begin{corollary}
A superisolated singularity, where $C$ is an irreducible rational unicuspidal curve,
satisfies the Seiberg--Witten Invariant Conjecture \ref{con:SWIC} (predicted in
\cite{BLMN1,BLMN2} too).
\end{corollary}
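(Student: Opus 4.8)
The plan is to derive the Corollary as a formal consequence of Theorem \ref{th:main}(b), feeding in two facts that are already available: the \emph{almost rationality} of the link in the unicuspidal case, and the identification (\ref{th:SW}) of the normalized Euler characteristic of the lattice cohomology with the Seiberg--Witten invariant. Since a rational unicuspidal curve $C$ has $\nu=1$, the superisolated germ $(X,0)$ associated with $f=f_d+f_{d+1}$ is covered by part (b) of Theorem \ref{th:main} (the statement allows an arbitrary number of cusps), and that theorem already gives $p_g=\min_\gamma eu(\bH^0(\gamma))=eu(\bH^0(M))$. Thus it only remains to identify $eu(\bH^0(M))$ with the predicted topological quantity $-\frsw(M)-(K^2+|\calv|)/8$.

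For this I would invoke that, when $\nu=1$, the graph $G$ becomes rational after modifying the decoration at a single vertex, i.e. $M$ is almost rational (cf. \cite{[63]}). By \cite{Nexseq} this forces the vanishing $\bH^q(M)=0$ for all $q\geq 1$, hence $\bH^q_{red}(M)=0$ for $q\geq 1$ as well. Substituting into the definition $eu(\bH^*(M))=-\m+\sum_q(-1)^q\rank_\Z\bH^q_{red}(M)$ collapses the alternating sum to its $q=0$ term, so $eu(\bH^*(M))=-\m+\rank_\Z\bH^0_{red}(M)=eu(\bH^0(M))$. Combining with (\ref{th:SW}), i.e. with the theorem of \cite{NSW}, I obtain the chain $p_g=eu(\bH^0(M))=eu(\bH^*(M))=-\frsw(M)-(K^2+|\calv|)/8$, which is precisely the assertion of Conjecture \ref{con:SWIC} for this germ; this also confirms the prediction of \cite{BLMN1,BLMN2}.

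There is essentially no obstacle internal to the Corollary itself: the whole weight of the argument is carried by Theorem \ref{th:main}(b), whose proof occupies Section 3 (relying on \cite{NR} and the $d$--invariant vanishing of \cite{BL}). The only point that genuinely requires care is the vanishing $\bH^{\geq 1}(M)=0$, and this is exactly where unicuspidality is used: it is the $\nu=1$ hypothesis that makes $M$ almost rational. For $\nu\geq 2$ one only has $\bH^{\geq\nu}(M)=0$, the modules $\bH^q(M)$ with $1\leq q<\nu$ may be nonzero, and the step $eu(\bH^0(M))=eu(\bH^*(M))$ breaks down — indeed the SWIC genuinely fails there, as Example \ref{ex:SI}(b) records, while Theorem \ref{th:main}(b) still holds because it targets $\min_\gamma eu(\bH^0(\gamma))$ rather than $eu(\bH^*(M))$.
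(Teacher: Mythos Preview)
Your argument is correct and matches the paper's own proof essentially line for line: invoke Theorem \ref{th:main}(b) for $p_g=eu(\bH^0(M))$, use almost rationality (via \cite{[63]} and \cite{Nexseq}) to get $\bH^{\geq 1}(M)=0$ and hence $eu(\bH^0(M))=eu(\bH^*(M))$, then apply (\ref{th:SW}). Your closing paragraph on why $\nu=1$ is essential is a welcome elaboration but not a deviation.
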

Indeed, in this case the graph is `almost rational' (cf. \cite{[63]}),
hence $\bH^q(M)=0$ for $q\geq 1$ by \cite{Nexseq}. In particular,
$eu(\bH^*(M))=eu(\bH^0(M))$. Hence part (b) of the theorem suffices.
\bekezdes\labelpar{bek:wish}
 Let us repeat the meaning of the identity in Theorem~\ref{th:main}.
 For any sequence  $\gamma :=\{l_i\}_{i=0}^t$, $l_i\in L$ with
  $l_0=0$, $l_t=Z_K$, and $l_{i+1}=l_i+
E_{v(i)}$ ($v(i)\in \calv$) we set
$eu(\bH^0(\gamma)):=\sum_{i=0}^{t-1} \max\{0,\, \chi(l_{i})-\chi(l_{i+1})\}=
\sum_{i=0}^{t-1}\max\{0,\, -1+(l_i,E_{v(i)})\}$.
The statement is that $p_g=eu(\bH^0(\gamma))$ for a well--chosen path $\gamma$.

\bekezdes
Part (a) follows easily via the  inequalities (\ref{eq:ineq}) and (\ref{eq:PG2}).
By this part (a) we wish to emphasize that for several `simple cases',
one has all the equalities $p_g=\min_\gamma\, eu(\bH^0(\gamma))=eu(\bH^0(M))=eu(\bH^*(M))$.
(This explains why  in the earlier stage, when we had complete information only
about these simple  cases, it was difficult to predict the general behavior.)

Regarding part (a), note also that the condition  $p_g=\min_\gamma\, eu(\bH^0(\gamma))$ is more restrictive:
examples with  $p_g=\min_\gamma\, eu(\bH^0(\gamma))$ but with $p_g<eu(\bH^0(M))$ exist (see next example).

 \begin{example}\labelpar{ex:pgbH} \cite{LN} Consider the germ $(X,0)=\{x^{13}+y^{13}+x^2y^2+z^3=0\}$
 with non degenerate Newton principal part with $p_g=5$.
The graph is

\begin{picture}(300,50)(20,-5)
\put(125,25){\circle*{5}} \put(150,25){\circle*{5}}
\put(175,25){\circle*{5}} \put(200,25){\circle*{5}}
\put(225,25){\circle*{5}} \put(150,5){\circle*{5}}
\put(275,5){\circle*{5}} \put(125,25){\line(1,0){175}}
\put(150,25){\line(0,-1){20}} \put(275,25){\line(0,-1){20}}
\put(125,35){\makebox(0,0){$-2$}}
\put(150,35){\makebox(0,0){$-1$}}
\put(175,35){\makebox(0,0){$-7$}}
\put(200,35){\makebox(0,0){$-3$}}
\put(225,35){\makebox(0,0){$-3$}} \put(160,5){\makebox(0,0){$-3$}}
\put(285,5){\makebox(0,0){$-3$}} \put(300,25){\circle*{5}}
\put(250,25){\circle*{5}} \put(275,25){\circle*{5}}
\put(300,35){\makebox(0,0){$-2$}}
\put(250,35){\makebox(0,0){$-7$}}
\put(275,35){\makebox(0,0){$-1$}}
\end{picture}

\noindent Then $\m=-1$, $eu(\bH^0)=6$, ${\rm rank}\, \bH^1=1$, 
and  $\min_\gamma eu(\bH^0(\gamma))=5$.
 \end{example}

\bekezdes The proof of part (b) of Theorem~\ref{th:main}
 will be given is section \ref{s:SI}. In fact,
we will show that for superisolated singularities $p_g=eu(\bH^0(M))$, hence (a) applies.

The Newton non-degenerate case is treated in the remaining sections.

\section{The proof of Theorem \ref{th:main} (b) for superisolated singularities.}\label{s:SI}

\subsection{} For the proof of Theorem \ref{th:main}(b) we have to combine
two results from the literature. In order to provide a more complete picture,
we give some additional details as well.

In the last years we witnessed  a focused effort to prove the (SWI) Conjecture
\ref{con:SWIC} for superisolated singularities with $\nu=1$ cusp,
or to understand the main cause of its
 failure in particular cases with $\nu\geq 2$. This materialized in several
 results \cite{BLMN1,BLMN2,[68],LMN,[63],[69],NR}
  and a new conjecture regarding the algebraic realization of cuspidal rational curves of
 fixed degree and given local singularities, the `semigroup distribution property' \cite{BLMN1,BLMN2}.
 These results and their reformulations in different languages
 of Heegaard Floer or Seiberg Witten theory
 (although they were originally motivated and guided by the SWIC and the distribution property),
reorganized and put together in a new puzzle provide the proof of Theorem \ref{th:main}.
This also shows that, in fact, for {\it certain  analytic structures} the statement of \ref{th:main}
is the {\it right guiding statement} and not the one in the  SWIC.

Let $f=f_d+f_{d+1}:(\C^3,0)\to (\C,0)$ be a superisolated singularity as in Example~\ref{ex:examples}(b),
where $C=\{f_d=0\}\subset \C\P^2$ is an irreducible rational cuspidal curve with $\nu$ cusps.
Let $(C,p_i)\subset (\C\P^2,p_i)$ be the local singularities of $C$, we denote their Milnor numbers by
$\mu_i$, and their local links by $K_i\subset S^3$.  Then the link of $(X,0)=\{f=0\}$
is $S^3_{-d}(K)$, where $K$ is the connected sum $K_1\#\cdots \# K_\nu$ . For the  plumbing
graph constructed  from the embedded resolution graphs of $(C,p_i)$
and the integer $d$,  see \cite[2.3]{NR}. The degree $d$ and the local
singularity types $(C,p_i)$ are related:
the rationality of $C$ implies $\sum_i\mu_i=(d-1)(d-2)$.
In particular, any statement like the SWIC
(or our main theorem) makes a bridge between the local topological types $(C,p_i)$ and the
global degree $d$. Computations show that for  general
$S^3_{-d}(K)$ (when the pair $(K,d)$ has no algebraic realization as above)
such identities cannot be expected. Hence such identities provide criterions for the algebraic realizations of
a degree $d$ curve with given local singularities.

\bekezdes The proof splits into two parts. The first part, done in \cite{NR},
 is valid for {\it any} surgery 3--manifold $S^3_{-d}(K)$.
It provides $\bH^*(S^3_{-d}(K))$, where $K$ is a connected sum of algebraic knots in $S^3$,
and $d$ is an arbitrary positive integer (and we do not assume the existence of a degree $d$ curve with
local types $K_i\subset S^3$, not even the identity $\sum_i\mu_i=(d-1)(d-2)$).
The cohomology is described completely in
terms of the semigroups $\{S_i\}_{i=1}^\nu$
of the algebraic knots $K_i\subset S^3$. The construction uses the `reduction theorem' of \cite{LN},
which describes the lattice cohomology (defined a priori in the lattice $L$)
in the first quadrant of a  lattice of rank $\nu$.
 The formula of $eu(\bH^0)$ is the following.
   For any $n\in \Z_{\geq 0}$ set
   $$\Sigma_n:=\{(\beta_1,\ldots,\beta_\nu)\in (\Z_{\geq 0})^\nu\, :\, \textstyle{\sum_i}
   \beta_i=n+1\},$$
   and for any $(\beta_1,\ldots,\beta_\nu)\in( \Z_{\geq 0})^\nu$ set the weight
   $$\overline{W}(\beta_1,\ldots,\beta_\nu):=
   \textstyle{\sum_{i=1}^\nu}\ |\ \{k_i\not\in S_i\,:\, k_i\geq \beta_i\}\ |.
   $$
   Then, by \cite[6.1.15]{NR} (see also Remark 6.1.19 in [loc. cit.])
\begin{equation}
\label{eq:NR}
eu(\bH^0(S^3_{-d}(K)))=\sum_{j\geq 0}\min\, \{\overline{W} \ \mbox{restricted to $\Sigma_{jd}$}\}.
\end{equation}
The sum in (\ref{eq:NR}) is finite: since $\{k_i\not\in S_i\,:\, k_i\geq \mu_i\}=\emptyset$,
if $jd+1\geq \sum_i\mu_i$ we get $\min(\{\overline{W}|\Sigma_{jd}\}=0$.
E.g., if $\sum_i\mu_i=(d-1)(d-2)$, then for $j\geq d-2$ we get zero contribution.
\begin{remark}
Though $\bH^0(\gamma)$ is not mentioned in \cite{NR},
in its  section 6 one can see clearly that
$\min_\gamma eu (\bH^0(\gamma))=eu (\bH^0)$ even in this general topological context; nevertheless, we do not
need this, in our case it will follow automatically from the estimate of the second part.\end{remark}

\bekezdes The second ingredient is the main result of
\cite{BL} (as the proof of the conjectured distribution property of \cite{BLMN1}), and it
is valid only in the presence of the algebraic  realization of $S^3_{-d}(K)$.
Its proof uses surgery properties of the
$d$--invariant of the Heegaard Floer theory.

 \cite[Theorem 5.4]{BL} reads as follows.   If for any $(\beta_1,\ldots,\beta_\nu)\in \Z^\nu$ one writes
   $$\overline{W}^*(\beta_1,\ldots,\beta_\nu):=
   \textstyle{\sum_{i=1}^\nu}\ |\ \{k_i\in \Z\setminus S_i\,:\, k_i\geq \beta_i\}\ |,
   $$
then
\begin{equation}
\label{eq:BL}
\min\, \{\overline{W}^* \ \mbox{restricted to $\Sigma_{jd}$}\}=(j-d+1)(j-d+2)/2.
\end{equation}
Since $S_i\subset \Z_{\geq 0}$, the minimum in (\ref{eq:BL}) is realized for
$(\beta_1,\ldots,\beta_\nu)\in (\Z_{\geq 0})^\nu$, hence the minimums in (\ref{eq:BL}) and
(\ref{eq:NR}) agree. (Indeed, if $\beta_1<0$ and $\beta_\nu>0$, then
$\overline{W}^*(\beta_1,\ldots,\beta_\nu)\geq \overline{W}^*(\beta_1+1,\ldots,\beta_\nu-1)$.)
Hence (\ref{eq:BL}) and (\ref{eq:NR}) combined give
$$eu(\bH^0(S^3_{-d}(K)))=\sum_{j=0}^{d-2}j(j+1)/2=d(d-1)(d-2)/6=p_g.$$
This with inequalities  (\ref{eq:ineq}) and (\ref{eq:PG2}) end the proof, cf. part (a).

\vspace{2mm}

For us, in fact, the main geometric meaning of the `arithmetical result' of \cite{BL} is exactly
the statement of Theorem~\ref{th:main}(b).

\section{Preliminaries on Newton non-degenerate singularities}\label{s:NN1}

\subsection{The Newton boundary \cite{Kou}}\labelpar{ss:NN}
For any  set  $S\subset\N^3$ denote by $\Gamma_+(S)
\subset \R^3$ the convex closure of $\bigcup_{\bfp\in
S}(\bfp+\R_+^3)$. The collection of all  boundary faces of
$\Gamma_+(S)$  is denoted by $\calf$, while the set of
\emph{compact} faces of \(\Gamma_+(S)\)  by $\calf_c$.
  By definition, the \emph{Newton boundary} (or
\emph{diagram}) \(\Gamma(S)\) associated with $S$ is the union of
compact boundary faces of $\Gamma_+(S)$.
Let $\Gamma_-(S)$ denote the cone with base \(\Gamma(S)\) and vertex \(0\).

Let $f\colon (\C^3,0) \to (\C,0)$ be an analytic function germ
defined by a convergent power series $\sum_\bfp a_\bfp z^p$,
where $\bfp=(p_1,p_2,p_3)$ and $z^\bfp:=z_1^{p_1}z_2^{p_2}z_3^{p_3}$. By definition, the
\emph{Newton boundary} $\Gamma(f)$ of $f$ is $\Gamma({\rm supp}(f))$,
where ${\rm supp}(f)$ is the support $\{\bfp : a_\bfp\neq0\}$ of $f$,
and we write $\Gamma_\pm(f)$ for $\Gamma_\pm({\rm supp}(f))$. The
\emph{Newton principal part} of $f$ is $\sum_{\bfp\in \Gamma(f)}
a_\bfp z^\bfp$. Similarly, for any $q$-dimensional face $\bigtriangleup $ of
$\Gamma(f)$, set
$f_\bigtriangleup(z):=\sum_{\bfp\in\bigtriangleup}a_\bfp
z^\bfp$. We say that $f$ is \emph{non-degenerate} on
$\bigtriangleup$ if the system of equations $\partial
f_\bigtriangleup/\partial z_1=\partial f_\bigtriangleup/\partial
z_2 =\partial f_\bigtriangleup/\partial z_3=0$
 has no solution in
$(\C^*)^3$. When $f$ is non-degenerate on every $q$-face
 of $\Gamma(f)$, we say (after Kouchnirenko
\cite{Kou}) that $f$ has a \emph{non-degenerate Newton
principal part}. In the sequel we assume that $f$ has this property. Moreover, we also
assume that $f$ defines an isolated singularity at the origin.
This can be characterized by $\Gamma(f)$ as follows: cf. \cite[1.13(ii)]{Kou} or \cite[2.1]{BN}:
 \begin{equation}\label{eq:IS}
 \begin{split}
\mbox{ $\Gamma(f)$ has a vertex on every coordinate plane, and it has }\\
\mbox{a vertex at most 1 far from any chosen coordinate axis.}\end{split}\end{equation}
Nevertheless, we will not assume that $\Gamma(f)$ is `convenient'
 (recall that $\Gamma(f)$ is  convenient if it intersects all the coordinate axes).
 In fact, if $f$ is not convenient, there are several ways to complete the
 diagram to a convenient one by not modifying the equisingularity type. Hence, in general,
 several diagrams might produce the same
 equsingularity type. From all  possible diagrams {\it we choose a minimal one} (with respect to the inclusion,
 for its existence and `almost unicity' see \cite[\S 3]{BN}). This minimal diagram, in general, is not
 convenient.

Furthermore, as always in this note, we assume that the link $M$ of $(X,0)=\{f=0\}$
is a \emph{rational homology
sphere}. This, in terms of  Newton boundary $\Gamma(f)$ reads as follows, cf.~\cite{Saito}:
\begin{equation}\label{eq:rhs}
\text{$M$ is a rational homology sphere}  \iff
 \Gamma(f)\cap \N_{>0}^3=\emptyset.
\end{equation}

\begin{lemma} \cite[\S 2.3]{BN}
  \label{lem:13} Under the above assumptions,
  if a face of $\Gamma(f)$ is not a triangle then it is a trapezoid.
  By permuting coordinates, its vertices are:
  \((p,0,n)\), \((0,q,n)\),
  \((r_1,r_2+tq,0)\) and
  \((r_1+tp,r_2,0)\), where \(p,q > 0\), \({\rm gcd}(p,q)=1\), \(t \geq 1\) and
  \(r_1,r_2 \geq 0\).

  For any face of the diagram at most one edge might have inner
   lattice points, and if an edge has inner lattice points then that edge sites
    in a coordinate plane.
\end{lemma}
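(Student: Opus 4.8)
The approach is entirely elementary convex geometry, combined with the isolated singularity constraint \eqref{eq:IS} and the rational homology sphere constraint \eqref{eq:rhs}. First I would recall that every compact face $\triangle$ of $\Gamma(f)$ is a convex lattice polygon lying in a supporting plane $\{\langle a, x\rangle = m\}$ with $a\in\N^3_{>0}$ primitive (the positivity of $a$ is forced because $\Gamma_+(S)$ contains $\bfp+\R^3_+$ for each $\bfp$ in the support). The vertices of $\triangle$ are among the lattice points of the support, and by \eqref{eq:rhs} no lattice point of $\triangle$ lies in $\N^3_{>0}$; in particular every vertex of $\triangle$ lies on some coordinate plane. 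So the vertex set of $\triangle$ is distributed among the three planes $\{x_i=0\}$.

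The heart of the argument is a counting/positivity constraint coming from \eqref{eq:IS}. Fix one coordinate axis, say the $x_3$--axis. By \eqref{eq:IS}, $\Gamma(f)$ has a vertex at distance at most $1$ from that axis, i.e. there is a vertex of the form $(0,0,c)$ or $(1,0,c)$ or $(0,1,c)$. Iterating this over the three axes pins down the ``low'' vertices of the diagram, and — this is the step that does the real work — forces that on any single face at most one of the three coordinate planes can meet $\triangle$ in an edge with interior lattice points, and that such an edge must lie in a coordinate plane rather than cut across the interior. Concretely: suppose a face $\triangle$ has two edges, on two different coordinate planes, each containing an interior lattice point; tracking the supporting normal $a=(a_1,a_2,a_3)$ and using that those interior points are not in $\N^3_{>0}$, together with \eqref{eq:IS} applied to the axis ``opposite'' to both planes, yields a lattice point of $\Gamma(f)$ in $\N^3_{>0}$, contradicting \eqref{eq:rhs}. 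This gives the second assertion of the lemma, and it is the technical crux.

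Granting that, the shape statement follows. If $\triangle$ is not a triangle it has at least four vertices. Its vertices lie on the coordinate planes; if all of them were on a single plane $\{x_i=0\}$ the face would be two-dimensional inside that plane and the normal $a$ would have $a_i>0$ with $a_j=0$ for $j\ne i$, which is impossible for a compact face (it would force an unbounded direction). So the vertices are spread over at least two planes. A short case analysis — using that $\gcd$ of the primitive edge directions must behave well and that no vertex is interior to $\N^3_{>0}$ — shows that they occupy exactly two coordinate planes, two vertices on each, and that after permuting coordinates the face has the stated vertices $(p,0,n)$, $(0,q,n)$ on the plane $\{x_2=0\}\cup\{x_1=0\}$ at height $x_3=n$, and $(r_1,r_2+tq,0)$, $(r_1+tp,r_2,0)$ on $\{x_3=0\}$; the conditions $p,q>0$, $\gcd(p,q)=1$, $t\ge 1$, $r_1,r_2\ge 0$ come from primitivity of the edge from the first pair to the second pair and from the requirement that all four points lie in the same supporting plane (which forces the displacement between the two bottom vertices to be an integer multiple $t$ of $(p,-q,0)$, up to sign).

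The main obstacle I expect is exactly the two-interior-lattice-points exclusion: one must set up the supporting hyperplane normal carefully and combine it with the ``distance $\le 1$ from an axis'' clause of \eqref{eq:IS} so that the forbidden point in $\N^3_{>0}$ genuinely appears on $\Gamma(f)$ (not merely in $\Gamma_+(f)$). Everything else is bookkeeping on a lattice polygon with all vertices on the coordinate planes. Since the statement is quoted from \cite[\S 2.3]{BN}, I would in practice cite that reference for the details and only sketch the convex-geometry mechanism above.
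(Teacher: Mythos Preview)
The paper does not supply its own proof of this lemma: it is stated with the citation \cite[\S 2.3]{BN} and no argument is given in the text. Your sketch follows the same convex-geometry mechanism as the cited reference (lattice polygon in a supporting hyperplane with strictly positive normal; vertices forced onto coordinate planes by \eqref{eq:rhs}; the ``distance $\le 1$ from each axis'' clause of \eqref{eq:IS} used to exclude two edges with interior lattice points), so there is nothing substantive to compare.

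One cautionary remark: the exclusion of two edges with interior lattice points is not literally true for an arbitrary diagram satisfying \eqref{eq:IS} and \eqref{eq:rhs}. The paper itself notes later (in \ref{ex:trap}) that the single-face Brieskorn diagram $z_1^{2a}+z_2^{2b}+z_3^{2c}$ with $a,b,c$ pairwise coprime has a triangular face all three of whose edges carry interior lattice points. Your contradiction argument (``apply \eqref{eq:IS} to the axis opposite both planes and produce a point in $\N^3_{>0}\cap\Gamma(f)$'') tacitly assumes there \emph{is} such an opposite axis not already adjacent to the face, which fails precisely when the diagram consists of a single face touching all three coordinate planes. So if you intend to write out the proof rather than cite \cite{BN}, you should either carve out this exception explicitly (as the paper does in \ref{ex:trap}) or add the hypothesis that the diagram has more than one face.
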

In the sequel we denote by
$\langle p,q\rangle=\sum_ip_iq_i$ the standard scalar product on $\R^3$.
Let $N_\bigtriangleup$ be the primitive normal vector
of a 2--face $\bigtriangleup\in\calf$ oriented such that $\langle N_\bigtriangleup, (1,1,1)\rangle >0$.
In the case of two adjacent faces
$\bigtriangleup$ and $\bigtriangledown$ from $\calf$, we write $t_{\bigtriangleup,\bigtriangledown}-1$
 for the number of  interior lattice points of  the common edge. If both faces are compact then
 $t_{\bigtriangleup,\bigtriangledown}=1$ by (\ref{eq:rhs}). Furthermore,
 let $n_{\bigtriangleup,\bigtriangledown}$ be the greatest common divisor of the 2-minors of the
 vectors $N_\bigtriangleup$ and $N_\bigtriangledown$.
\subsection{Oka's algorithm for $G$}\labelpar{OKAsect}
Let $f\colon (\C^3,0) \to (\C,0)$ be a germ as in \ref{ss:NN}.
We recall the combinatorial algorithm of M. Oka  (based on  toric resolution),
which provides a dual resolution graph
$G$ of $(X,0)$ from $\Gamma(f)$, cf.  \cite[Theorem~6.1]{Oka}.

\bekezdes {\bf (The algorithm)}\labelpar{alg}
The graph $G$ is a subgraph of a larger graph
$\widetilde{G}$, whose construction is the following. We
start with a set of vertices, each of them corresponding to a face from
$\calf$ (we will call them
\emph{face vertices}).
Consider two adjacent faces
$\bigtriangleup$ and $\bigtriangledown$ from $\calf$.
Then we connect the corresponding vertices  by
$t_{\bigtriangleup,\bigtriangledown}$ copies of the
following chain.

If \(n_{\bigtriangleup,\bigtriangledown}>1\) then
let \(0 < c_{\bigtriangleup,\bigtriangledown} <
n_{\bigtriangleup,\bigtriangledown}\) be the
unique integer for which
\begin{equation}\label{eq:333}
N_{\bigtriangleup,\bigtriangledown} :=
(N_{\bigtriangledown} +
c_{\bigtriangleup,\bigtriangledown}
N_{\bigtriangleup})/n_{\bigtriangleup,\bigtriangledown}
\end{equation}
is an integral vector.  Let us write
\(n_{\bigtriangleup,\bigtriangledown}/
c_{\bigtriangleup,\bigtriangledown}\) as a (negative)
continued fraction:
\begin{equation}
  \label{eq:3}
  \frac{n_{\bigtriangleup,\bigtriangledown}}
  {c_{\bigtriangleup,\bigtriangledown}} =
   e_1 -
  \cfrac{1}{e_2 -\cfrac{1}{ \dotsb - \cfrac{1}{e_k}}}\ ,
\end{equation}
where each \(e_i \geq 2\).  Then the chain with the corresponding
self-intersection numbers is

\begin{picture}(270,40)(0,5)
\put(150,20){\circle*{5}} \put(190,20){\circle*{5}}
\put(270,20){\circle*{5}} \put(110,20){\line(1,0){100}}
\put(250,20){\line(1,0){60}}
\put(110,20){\makebox(0,0){$\bigtriangleup$}}
\put(310,20){\makebox(0,0){$\bigtriangledown$}}
\put(150,30){\makebox(0,0){$-e_1$}}
\put(190,30){\makebox(0,0){$-e_2$}}
\put(270,30){\makebox(0,0){$-e_k$}}
\put(230,20){\makebox(0,0){$\cdots$}}
\end{picture}

The left ends of all the
$t_{\bigtriangleup,\bigtriangledown}$ copies of the
chain (marked by $\bigtriangleup$) are identified with the face
vertex corresponding to $\bigtriangleup$, and similarly for the right
ends marked by $\bigtriangledown$.

If \(n_{\bigtriangleup,\bigtriangledown}=1\) then
the chain consists of an  edge connecting the vertices
\(\bigtriangleup\) and \(\bigtriangledown\) (we put
$t_{\bigtriangleup,\bigtriangledown}$ of them). Also,
in this case we set
$c_{\bigtriangleup,\bigtriangledown} :=
0$ and $N_{\bigtriangleup,\bigtriangledown}
:= N_{\bigtriangledown}$.

Next, we compute the decoration
\(b_{\bigtriangleup}\) of any face vertex
\(\bigtriangleup\in\calf_c\) by the equation:
\begin{equation}
  \label{eq:4}
  b_{\bigtriangleup} N_{\bigtriangleup} +
  \sum_{\bigtriangledown\in\calf_\bigtriangleup}
  t_{\bigtriangleup,\bigtriangledown}
  N_{\bigtriangleup,\bigtriangledown} = 0,
\end{equation}
where $\calf_\bigtriangleup$ is the collection of all
2--faces of $\Gamma_+(f)$ adjacent to $\bigtriangleup$.

In this way we obtain the graph $\widetilde{G}$.
Notice that the face vertices corresponding to \emph{non-compact
faces} are not decorated.
\begin{proposition}\labelpar{prop:OKA}
(a) \cite{Oka} If we delete all the vertices corresponding to non--compact faces  (and all
the edges adjacent to them) we get a dual resolution graph $G$.

(b) \cite[4.2.5]{BN} Under the above choice of the `minimal' Newton diagram,
cf. \ref{ss:NN},
the graph $G$ is the minimal good resolution graph. In particular, the nodes (vertices with valency
$\delta_v\geq 3$) are exactly the face vertices associated with compact faces of the diagram.

(c) \cite[3.3.11]{BN} If $\bigtriangleup$ is a compact face and $\bigtriangledown$ is an adjacent
non-compact face, then \(n_{\bigtriangleup,\bigtriangledown}>1\). Hence, such an edge of  $\bigtriangleup$
produces a nontrivial chain (leg) of $G$.
\end{proposition}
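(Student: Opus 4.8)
The statement to establish is that if $\bigtriangleup$ is a compact face of $\Gamma(f)$ and $\bigtriangledown$ is an adjacent non-compact face of $\Gamma_+(f)$, then the quantity $n_{\bigtriangleup,\bigtriangledown}$ (the greatest common divisor of the $2\times 2$ minors of the primitive normal vectors $N_\bigtriangleup$, $N_\bigtriangledown$) is strictly greater than $1$; equivalently, that the edge produces a genuine chain of $(-2)$-or-larger vertices, hence a nontrivial leg of $G$. The plan is to work directly with the geometry of the two normal vectors and their common edge, using the isolated-singularity condition \eqref{eq:IS} and the rational-homology-sphere condition \eqref{eq:rhs}.

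First I would describe $N_\bigtriangledown$ explicitly. A non-compact face $\bigtriangledown$ of $\Gamma_+(f)$ is unbounded, so its primitive normal $N_\bigtriangledown$ has at least one zero coordinate (the face contains a ray parallel to the corresponding axis); up to permuting coordinates we may assume $N_\bigtriangledown=(0,a,b)$ with $a,b\geq 0$ and $\gcd(a,b)=1$. Since $\langle N_\bigtriangledown,(1,1,1)\rangle>0$ we have $a+b>0$. Next I would locate the common edge $\bigtriangleup\cap\bigtriangledown$: it lies in the hyperplane $\{x_1=\text{const}\}$ only if both normals have the same first-coordinate behavior, so more usefully, the common edge is the set where both supporting functions are attained, and its direction vector $w$ satisfies $\langle N_\bigtriangleup,w\rangle=\langle N_\bigtriangledown,w\rangle=0$. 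Then $n_{\bigtriangleup,\bigtriangledown}$ is exactly the index of the sublattice generated by $N_\bigtriangleup,N_\bigtriangledown$ inside the primitive sublattice of their span, equivalently $\gcd$ of the $2$-minors, equivalently (by standard lattice geometry) the lattice length divisor relating $w$ to the edge's primitive direction together with the position of the edge relative to the coordinate planes.

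The heart of the argument is the following dichotomy. If the common edge does \emph{not} lie in any coordinate plane, then by Lemma \ref{lem:13} (the trapezoid/triangle classification and the statement that an edge with interior lattice points must sit in a coordinate plane) and by \eqref{eq:rhs} ($\Gamma(f)\cap\N_{>0}^3=\emptyset$), one shows the edge cannot be a face of a non-compact face at all — a non-compact face adjacent to a compact one along an interior (all-positive) edge would force lattice points of $\Gamma(f)$ with all positive coordinates, contradicting \eqref{eq:rhs}; so in fact the shared edge \emph{always} lies in a coordinate plane, say $\{x_1=0\}$. Then both $\bigtriangleup$ and $\bigtriangledown$ meet $\{x_1=0\}$ in this common edge, the edge has primitive direction of the form $(0,*,*)$, and I compute $n_{\bigtriangleup,\bigtriangledown}$ in this $2$-dimensional slice: writing $N_\bigtriangleup=(c,d,e)$ with $c>0$ (it must have positive first coordinate, else $\bigtriangleup$ would be unbounded in the $x_1$-direction too), the relevant $2$-minor pairing $N_\bigtriangleup$ against $N_\bigtriangledown=(0,a,b)$ gives $n_{\bigtriangleup,\bigtriangledown}=\gcd(cb, -ca, ad-eb)\ \big/\ (\text{correction})$ — and the key point is that $c\mid$ two of the three minors, so $n_{\bigtriangleup,\bigtriangledown}$ shares a factor with $c$, and I must rule out $c=1$ forcing the gcd to be $1$. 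Here the isolated-singularity condition \eqref{eq:IS} enters: a vertex of $\Gamma(f)$ on the edge lies at distance at most $1$ from the $x_1$-axis, which pins down the possible small values; combining with $\gcd(a,b)=1$ and $a+b>0$ one concludes that in fact $c\geq 2$ (otherwise the face $\bigtriangleup$ would force a coordinate-plane vertex configuration excluded by minimality of the diagram, cf. \ref{ss:NN}), whence $n_{\bigtriangleup,\bigtriangledown}\geq 2$. I would finish by invoking the algorithm \ref{alg}: $n_{\bigtriangleup,\bigtriangledown}>1$ means the chain \eqref{eq:3} is used with $k\geq 1$ and all $e_i\geq 2$, so a nontrivial leg is attached to $\bigtriangleup$.

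The main obstacle I anticipate is the bookkeeping in the last step: correctly identifying which of the three $2$-minors of $(N_\bigtriangleup, N_\bigtriangledown)$ carries the divisibility, and translating the isolated-singularity condition \eqref{eq:IS} and the minimality of the chosen Newton diagram into the precise numerical inequality $c\geq 2$ (equivalently, ruling out the degenerate case where $N_\bigtriangleup$ already has first coordinate $1$ and the gcd collapses). This is where a careful case analysis following Lemma \ref{lem:13}'s trapezoid normal form — keeping track of which coordinate plane the non-compact direction points into — will be needed, but it is essentially the content of \cite[3.3.11]{BN}, so the routine verification can be deferred to that reference.
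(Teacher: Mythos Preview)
The paper does not prove Proposition \ref{prop:OKA}; all three parts are stated with citations to \cite{Oka} and \cite{BN} and no argument is supplied. Your closing sentence --- deferring the verification of the delicate numerical step to \cite[3.3.11]{BN} --- is therefore exactly what the paper itself does, and in that sense your proposal is already aligned with the paper.

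That said, the sketch you give before deferring contains a genuine error. You claim that the shared edge $\bigtriangleup\cap\bigtriangledown$ must lie in a coordinate plane, arguing that an ``interior (all-positive) edge'' would force a lattice point of $\Gamma(f)$ into $\N_{>0}^3$ and contradict \eqref{eq:rhs}. This inference fails: the \emph{endpoints} of the edge are vertices of $\Gamma(f)$ and hence have a zero coordinate by \eqref{eq:rhs}, but nothing prevents the two endpoints from having \emph{different} zero coordinates, so the edge need not sit in any coordinate plane. Concretely, by the description used in the proof of Lemma \ref{lem:zkminuse}, a non-compact face has (up to permutation) equation $z_1 + a z_2 = a$ with $a\in\Z_{\geq 0}$; when $a>0$ (which genuinely occurs for non-convenient minimal diagrams, cf.\ \ref{ss:NN}) the only lattice solutions are $(z_1,z_2)=(a,0)$ and $(0,1)$, and the shared edge runs from a vertex $(a,0,r)$ to a vertex $(0,1,s)$. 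This edge lies in no coordinate plane. In that case $N_\bigtriangledown=(1,a,0)$, the edge has primitive direction $(-a,1,s-r)$, and comparing with $N_\bigtriangleup\times N_\bigtriangledown$ shows $n_{\bigtriangleup,\bigtriangledown}$ equals the \emph{third} coordinate $e$ of $N_\bigtriangleup=(c,d,e)$, not the first. Your divisibility claim ``$c\mid$ two of the three minors'' and the reduction to ruling out $c=1$ are therefore aimed at the wrong coordinate. The actual argument in \cite{BN} handles the boundary edges by a case analysis keyed to which coordinate of $N_\bigtriangledown$ vanishes, using both \eqref{eq:IS} and the minimality of the diagram; your plan invokes the right hypotheses but misroutes the computation.
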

We denote by $\{b_v\}_{v\in \calv}$ the decorations (self--intersections) of the corresponding vertices.
\bekezdes \labelpar{bek:embres}  {\bf The `extended' graph $G^e$.}
 Replace each edge of $\widetilde{G}$ connecting
a face vertex $\widetilde{v}$ of $\widetilde{G}\setminus G$
and another vertex  $w$ of $G$  by an arrow  with  arrowhead $a$ and
supporting vertex  $w$. Then $G^e$ consists of $G$ equipped with this type of arrowheads.

 The set of vertices of $G$ is denoted by
$\calv$, the set of arrowheads of $G^e$ by $\cala$, and we refer to $\calv^e:=\calv\cup \cala$ as the set of
(arrowhead and non-arrowhead) vertices of $G^e$. Let $\caln$ be the set of (non--arrowhead)
face vertices of $G^e$; these are the nodes in both graphs $G$ and  $G^e$.

To each $v\in \calv^e$ we associate a vector $N_v$ in $\Z_{\geq 0}^3$.
  If $v$ is a face vertex of $G$ corresponding to
$\bigtriangleup$ then $N_v=N_\bigtriangleup$.
 If $a\in \cala$ corresponds to $\bigtriangleup\in\calf\setminus \calf_c$
 then $N_a=N_{\bigtriangleup}$.
  All the other  vectors are determined in a unique way by
the next identities (see \cite{BN}):

Fix $v\in \calv$, and set $\calv^e_v:=\{w\in \calv^e: \, w \ \mbox{adjacent to $v$ in $G^e$}\}$
and $\calv_v:=\calv^e_v\cap \calv$. Then
\begin{equation}  \label{eq:5}
  b_v N_v + \sum_{w\in\calv^e_v}
  N_{w} = 0.
\end{equation}
(In this procedure it helps to know that
 the vector associated with  a neighbor of a face vertex $v$ corresponding to $\bigtriangleup$
on the chain in the direction  $w$ corresponding to $\bigtriangledown$
is  $N_{\bigtriangleup,\bigtriangledown}$.)

If $v$ is on a chain connecting the face vertices $w_1$ and $w_2$, then $N_v=r_1N_{w_1}+r_2N_{w_2}$ with
$r_1,r_2\in \Q_{>0}$. Since for any $v\in\caln$
all the entries of $N_v$  are strictly  positive, the same property remains
true for all $v\in \calv$.
By (\ref{eq:5}) (which typically characterizes the multiplicities of the function encoded in
an embedded resolution graph), or by \cite{Oka}, one has
\begin{equation}\label{eq:zi}
\mbox{the multiplicity of  $z_i$ along $E_v$  is the $i$-th coordinate of
$N_v$ ($v\in\calv$).}\end{equation}
In fact, if we decorate the (arrowhead and non--arrowhead) vertices of $G^e$ by the $i$-th
 coordinate of the vectors  $N_v$, then we get the embedded resolution graph
 of the germ $z_i$ on $(X,0)$, where the arrowheads with zero decorations can be deleted,
 arrowheads with positive decorations represent strict transforms of $z_i=0$
 which are not necessarily transversal with their supporting curves (the
 corresponding decoration is the
 intersection multiplicity of the strict transform with the supporting curve). This decoration
 can be larger than 1 if the corresponding non-compact face is not a coordinate plane.

\begin{definition}\labelpar{def:H} \
(i) For each $v\in \calv^e$ we
denote by $\ell_v$ the linear function $\langle N_v,\cdot\rangle$.

(ii) If  $l = \sum_{v\in \calv} l_v E_v\in L$, then we write  $m_v(l) = l_v$.
We extend this for any $v\in \calv^e\setminus \calv$ by  setting  $m_v(l):=-1$.
(For a motivation of the value $-1$ see the proof of Lemma \ref{lem:zkminuse}.)

(iii) For any $l\in L$ and $v\in \calv^e$
we define the half-space  $H^{\geq}_v(l):=\{p\in \R^3\,|\, \ell_v(p)\geq
m_v(l)\}$, and write  $H^=_v(l)$ for its boundary plane.
Moreover, we define
the {\it rational polytope}
$$\Gamma^e_+(l):=
\cap_{v\in\calv^e}\ H^{\geq}_v(l).$$
\end{definition}

\subsection{Divisorial valuations, weights and the canonical cycle}\labelpar{ss:weight}
Let $f$ be as in (\ref{ss:NN}) and
$L$ the  lattice of rank $|\calv|$ associated with $G$, with  $E_v$ as
its basis. The resolution with exceptional curve $E=\cup_vE_v$ is denoted by $\widetilde{X}$
(as in \ref{bek:NOTG}).
\begin{definition}\labelpar{def:wt}\
(a)  For a nonempty finite set $S$ in $\Z_{\geq 0}^3$ and  $v\in\calv$  define
 $\wt_v (S) = \min_{p\in S} \ell_v(p)$ and
  $\wt(S):=\sum_{v\in\calv}\wt_v(S)E_v\in L$.
 For  $0\neq h\in\C\{z\}$ set $\wt_v (h) = \wt_v({\rm supp}(h))$ and
$\wt(h) = \wt({\rm supp}(h))$.

 (b)  For any $\bar h \in\calO_{X,0}$,
  denote by ${\rm div}_v(\bar h)$ the order of vanishing  on $E_v$
  of the pullback of $\bar h$ to $\widetilde{X}$, and set also
  ${\rm div}(\bar h)=\sum_v{\rm div}_v(\bar h)E_v$.
  If $h\in\C\{z\}$, let ${\rm div}(h) = {\rm div}(h|_X)$.
\end{definition}

\begin{rem}\labelpar{rem:wtdiv}
The functions $\wt_v$ are order functions.
For any $p\in\N^3$, we have $\wt(z^p) = {\rm div}(z^p)$ (cf.
\ref{eq:zi}). Since for all $h_1,\, h_2\in \C\{z\}$ with ${\rm supp}(h_1)\cap {\rm supp}(h_2)=\emptyset$
one has $\wt(h_1+h_2)=\min \{\wt(h_1),\wt(h_2)\}$, and
the ${\rm div}_v$ are valuations, we get
$\wt(h) \leq {\rm div}(h)$ for any $h\in\C\{z\}$.
\end{rem}

\begin{lemma}\labelpar{prop:canonical} \cite[Theorem (9.1)]{Oka}
The (anti)canonical divisor $Z_K=-K\in L$ satisfies
\begin{equation*} \label{eq:canonical}
 Z_K-E = \wt(f) - \wt(z_1z_2z_3).
\end{equation*}
\end{lemma}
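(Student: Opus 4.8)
The plan is to verify directly that the cycle $D:=\wt(f)-\wt(z_1z_2z_3)+E\in L$ satisfies the adjunction relations $(D,E_v)=-(K,E_v)=b_v+2$ for every $v\in\calv$; since $L$ is negative definite and $Z_K$ is the unique solution of this linear system, this forces $D=Z_K$, which is the assertion. As $(E,E_v)=b_v+\delta_v$ and $\wt_v(z_1z_2z_3)=\ell_v(1,1,1)$, the claim is equivalent to
$$(\wt(f),E_v)-(\wt(z_1z_2z_3),E_v)=2-\delta_v\qquad(v\in\calv).$$
The $z_1z_2z_3$-term is pure linear algebra in the vectors $N_v$: pairing $\wt(z_1z_2z_3)=\sum_u\ell_u(1,1,1)E_u$ with $E_v$ and substituting $b_vN_v+\sum_{w\in\calv^e_v}N_w=0$ from (\ref{eq:5}) telescopes everything except the arrowheads, giving $(\wt(z_1z_2z_3),E_v)=-\sum_{a}\ell_a(1,1,1)$, the sum over the arrowheads $a$ adjacent to $v$ in $G^e$ (which is empty unless $v$ is the last vertex of a leg).

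For the term $(\wt(f),E_v)$ the point is that $\wt_\bullet(f)$ is the restriction to the rays $\{N_\bullet\}$ of the support function $\Phi(N):=\min_{p\in\Gamma_+(f)}\langle N,p\rangle$, which is piecewise linear and linear on each cone of the normal fan of $\Gamma(f)$; by the construction in (\ref{alg}), Oka's fan is a regular refinement of this normal fan, so $\Phi$ is linear on the entire star of every ray $N_v$ with $v$ \emph{not} a node. Hence for such $v$ the computation of the previous paragraph applies verbatim, and the desired identity reduces to $\Phi(N_\bigtriangledown)=\langle N_\bigtriangledown,(1,1,1)\rangle-1$ for every non-compact face $\bigtriangledown$ (so that each arrowhead contributes $\ell_a(1,1,1)-1=\wt_a(f)$); this in turn follows from the isolated-singularity condition (\ref{eq:IS}) together with the minimality of the chosen diagram, since a non-compact face has a zero entry in $N_\bigtriangledown$ and its supporting plane then lies at lattice-distance one from $(1,1,1)$. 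The remaining case, $v$ a node corresponding to a compact face $\bigtriangleup$, is the hard one: there $\Phi$ is no longer linear near $N_\bigtriangleup=N_v$, and one must combine (\ref{eq:4}) with the local combinatorics of $\bigtriangleup$ (Lemma~\ref{lem:13}) — fixing a vertex $p_0$ of $\bigtriangleup$, the two chains issuing from $\bigtriangleup$ through the edges at $p_0$ see $\Phi$ as the single linear function $\langle\,\cdot\,,p_0\rangle$, while the contributions of the other chains must be balanced against the $+2$. This step is precisely the two-dimensional (plane-curve) canonical-cycle identity for $E_\bigtriangleup\cong\mathbb{P}^1$ inside the toric surface attached to $\bigtriangleup$, and carries the bulk of the bookkeeping; this is where I expect the main difficulty.

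In practice I would prefer the following coordinate-free route, which avoids the face-by-face analysis. Let $\pi\colon Y\to\C^3$ be Oka's toric modification, with regular fan; then $\widetilde X\subset Y$ is the strict transform of $X=\{f=0\}$, the pullback $\pi^*(dz_1\wedge dz_2\wedge dz_3)$ has divisor $\sum_\rho(\langle\rho,(1,1,1)\rangle-1)E_\rho$ over the rays $\rho$ of the fan (the three coordinate rays contributing $0$), and ${\rm div}_Y(\pi^*f)=\widetilde X+\sum_\rho\wt_\rho(f)E_\rho$. Taking the Poincar\'e residue along $\widetilde X$ of the meromorphic $3$-form $\pi^*\big(dz_1\wedge dz_2\wedge dz_3/f\big)$ produces $\pi^*\eta$, where $\eta$ generates the trivial canonical sheaf $\omega_X$; since ${\rm div}_X(\eta)=0$ near the origin, $\pi^*\eta$ has divisor supported on the exceptional set, so that divisor is exactly the canonical cycle $K$, and reading off coefficients yields $K=\sum_{v\in\calv}(\langle N_v,(1,1,1)\rangle-1-\wt_v(f))E_v=\wt(z_1z_2z_3)-E-\wt(f)$, i.e. $Z_K-E=\wt(f)-\wt(z_1z_2z_3)$. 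Here the only non-formal inputs are that the non-exceptional boundary curves $E_\rho\cap\widetilde X$ (for $\rho$ a coordinate ray or an arrowhead direction) occur with coefficient zero — again the identity $\Phi(N_\bigtriangledown)=\langle N_\bigtriangledown,(1,1,1)\rangle-1$, or, alternatively, automatic from the a priori exceptional support of ${\rm div}(\pi^*\eta)$ — and that $E_\rho\cap\widetilde X$ for $\rho=N_v$, $v\in\calv$, is the reduced irreducible curve $E_v$, which uses that $M$ is a $\Q HS^3$; these are the only points I expect to need genuine verification.
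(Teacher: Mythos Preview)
The paper does not supply a proof of this lemma; it is quoted as Oka's result, so there is nothing in the paper to compare against directly.

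Your second route---the Poincar\'e residue on the toric modification---is correct and is essentially the classical argument (and almost certainly how Oka proves it). The discrepancy formula $K_Y=\sum_\rho(\langle N_\rho,(1,1,1)\rangle-1)E_\rho$ for a smooth toric modification of $\C^3$ is standard, and taking the residue of $\pi^*(dz_1\wedge dz_2\wedge dz_3/f)$ along the strict transform $\widetilde X$ yields exactly the coefficients you write. The two checkpoints you flag are the right ones and both go through: under the paper's convention of a \emph{minimal} diagram, every non-compact face has equation $z_i+az_j=a$ (this includes $z_i=0$ when $a=0$), so $\Phi(N_\bigtriangledown)=\langle N_\bigtriangledown,(1,1,1)\rangle-1$ holds on the nose; and Newton non-degeneracy makes $\widetilde X$ transversal to each toric stratum, so $E_\rho|_{\widetilde X}$ is the reduced curve $E_v$ (or a disjoint union of such, when $t_{\bigtriangleup,\bigtriangledown}>1$, all with the same coefficient).

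Your first route (verifying the adjunction relations directly) is honest about its own gap at the node case, and that gap is real: for a face vertex $n$ the minimizers of $\langle N_w,\cdot\rangle$ for the various neighbours $w\in\calv_n$ sit on different edges of $\bigtriangleup$, so the sum does not telescope via~(\ref{eq:5}) as it does on chains, and what remains is precisely the adjunction identity for the face curve inside its ambient toric surface. That can be done, but it is the residue argument again, carried out one face at a time; the global residue route is cleaner because it treats all vertices uniformly.
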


\begin{lemma}\labelpar{lem:zkminuse} $\Gamma^e_+(Z_K-E)=\Gamma_+(f)-(1,1,1)$.
\end{lemma}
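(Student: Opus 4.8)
The plan is to unwind both sides in terms of linear functions and compare. By Lemma \ref{prop:canonical}, $Z_K - E = \wt(f) - \wt(z_1z_2z_3)$, so for every $v \in \calv$ we have $m_v(Z_K - E) = \wt_v(f) - \wt_v(z_1z_2z_3) = \wt_v(\supp f) - \ell_v(1,1,1)$, the last equality because $\wt_v(z_1z_2z_3) = \ell_v(1,1,1)$ (it is the divisorial value of the monomial $z_1z_2z_3$, cf. \ref{rem:wtdiv}). Hence for $v \in \calv$ the half-space $H^{\geq}_v(Z_K-E)$ is $\{p : \ell_v(p) \geq \min_{q \in \supp f}\ell_v(q) - \ell_v(1,1,1)\}$, i.e. exactly $\{p : \ell_v(p+(1,1,1)) \geq \min_{q\in \Gamma_+(f)}\ell_v(q)\}$, which is $H^{\geq}_v(\wt(f))$ translated by $-(1,1,1)$. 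So I would first show $\bigcap_{v\in\calv} H^{\geq}_v(Z_K-E) = \Gamma_+(f) - (1,1,1)$ restricted to the compact-face directions: this is just the statement that the Newton polytope $\Gamma_+(f)$ is cut out by the half-spaces $\ell_v(p) \geq \min_{q\in\supp f}\ell_v(q)$ where $v$ ranges over the face vertices (plus the coordinate hyperplanes), together with the observation that the vectors $N_v$ for $v$ on the chains are positive combinations $r_1 N_{w_1} + r_2 N_{w_2}$ of the two bounding face vectors (stated just before \ref{eq:zi}), hence their half-spaces are automatically implied.

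The genuinely delicate part is the contribution of the \emph{arrowhead} vertices $a \in \cala$, for which $m_a(Z_K - E) = -1$ by the convention in Definition \ref{def:H}(ii), and of the non-compact face vertices. Here the half-space is $H^{\geq}_a(Z_K-E) = \{p : \ell_a(p) \geq -1\} = \{p : \langle N_a, p+(1,1,1)\rangle \geq -1 + \ell_a(1,1,1)\}$. I must check this exactly reproduces the remaining facets of $\Gamma_+(f) - (1,1,1)$, namely the translates of the coordinate planes (when the non-compact face $\bigtriangleup$ corresponding to $a$ is a coordinate plane, $N_a$ is a standard basis vector $e_i$, $\ell_a(1,1,1) = 1$, and the half-space becomes $p_i \geq 0$, i.e. the $i$-th coordinate plane of $\R^3$ shifted by $-(1,1,1)$, which is $\{p_i' \geq -1\}$ — precisely a facet of $\Gamma_+(f)-(1,1,1)$), and of the non-coordinate non-compact faces, where $N_a$ has one zero entry but is not primitive-standard and the shift by $\ell_a(1,1,1)$ must be matched against $\wt_a(f)$. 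This is exactly where the value $-1$ in Definition \ref{def:H}(ii) was tuned, and I expect the argument to reduce to the identity $\wt_a(f) = \ell_a(1,1,1) - 1$ for every arrowhead $a$ — equivalently, that $\min_{q\in \Gamma_+(f)}\ell_a(q) = \ell_a(1,1,1)-1$ holds along every non-compact facet direction. This follows from the isolated-singularity condition \eqref{eq:IS}: on each coordinate plane $\Gamma(f)$ has a vertex, and it has a vertex at most $1$ away from every coordinate axis, which forces the minimal value of $\ell_a$ on $\supp f$ to sit exactly one unit below $\ell_a(1,1,1)$.

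So the key steps, in order, are: (1) rewrite $m_v(Z_K-E)$ for $v\in\calv$ via Lemma \ref{prop:canonical} and identify $H^{\geq}_v(Z_K-E)$ with a translate of the Newton half-space attached to $\ell_v$; (2) observe that the chain vertices contribute no new facets because $N_v$ is a positive rational combination of adjacent face vectors; (3) handle the arrowheads and non-compact faces, showing via \eqref{eq:IS} that the convention $m_a = -1$ makes $H^{\geq}_a(Z_K-E)$ coincide with the translated coordinate-type facets of $\Gamma_+(f)$; (4) conclude that the finite intersection defining $\Gamma^e_+(Z_K-E)$ is exactly the finite intersection of half-spaces cutting out $\Gamma_+(f)$, shifted by $-(1,1,1)$. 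The main obstacle is step (3): keeping careful track of the non-primitivity of $N_a$ for non-coordinate non-compact faces and verifying that no spurious facet is introduced or lost, which is precisely the bookkeeping that motivates the $-1$ convention referenced in the statement of Definition \ref{def:H}(ii).
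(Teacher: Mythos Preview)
Your plan is correct and follows essentially the same route as the paper: the compact-face directions are handled via Lemma~\ref{prop:canonical} (your steps (1)--(2)), and the non-compact faces via their explicit form together with the $-1$ convention (your step (3)). The paper compresses step (3) by directly citing that every non-compact face has equation $z_1+az_2=a$ (up to permutation, with $a\in\Z_{\geq 0}$), from which your identity $\wt_a(f)=\ell_a(1,1,1)-1$ is immediate; your derivation of this from \eqref{eq:IS} is the same content. Two small remarks: step (2) is unnecessary since Lemma~\ref{prop:canonical} already applies to \emph{all} $v\in\calv$, chain vertices included, so their half-spaces are automatically the correct translates; and your worry about ``non-primitivity of $N_a$'' is unfounded, since $N_a$ is primitive by definition (indeed $N_a=(1,a,0)$ in the explicit form).
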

\begin{proof} Let us analyse the non-compact faces of $\Gamma_+(f)$. They (up to a permutation of
coordinates) have the form $z_1+az_2=a$ for some $a\in \Z_{\geq 0}$.
This follows from (\ref{eq:IS}) or from \cite[3.1.2]{BN}.
This plane shifted by $(1,1,1)$
gives the plane $z_1+az_2=-1$, a fact which explaines
 Definition \ref{def:H} for $v\not\in\calv$ too.
Otherwise the statement follows from  Lemma \ref{prop:canonical}.
\end{proof}

\section{The proof of Theorem \ref{th:main}(c)}\labelpar{s:proof(c)}

Since part (a) includes the rational  hypersurfaces, we assume that $(X,0)$ is not rational. Let
$\widetilde{X}$ be a minimal good resolution (provided by  Prop. \ref{prop:OKA});
we write $\calO$ for $\calO_{\widetilde{X}}$. Let $G$ be the corresponding resolution graph.
By Lemma  \ref{cor:zke} we get $Z_K>E$.
\subsection{The sequence $\{z_i\}_i$ (preliminaries)}\labelpar{ss:zi}
\bekezdes\labelpar{bek:b1} From the definition of path cohomology,
by paragraphs \ref{bek:PCAn} and \ref{bek:wish},
we have to construct a sequence $\{l_i\}_{i=0}^t$,
$l_i\in L$, such that $l_0=0$, $l_t=Z_K$, $l_{i+1}=l_i+E_{v(i)}$ with
$$p_g=\sum_{i=0}^{t-1} h^1(\calO_{l_{i+1}})-   h^1(\calO_{l_{i}})=\sum_{i=0}^{t-1} h^1(\calO_{E_{v(i)}}(-l_i)).$$
\bekezdes\labelpar{bek:b2}
We need the  dual picture. Take  a sequence
 $\{z_i\}_{i=0}^t$,
$z_i\in L$, where $z_0=0$, $z_t=Z_K$, $z_{i+1}=z_i+E_{v(i)}$.
Then the pair of sheaves $\calO(-z_{i+1})\hookrightarrow   \calO(-z_{i})$ gives
 an exact sequence
\begin{equation}\label{eq:INPG}
0\to H^0(\widetilde{X}, \calO(-z_{i+1}))\to H^0(\widetilde{X}, \calO(-z_{i}))
\to H^0(\calO_{E_{v(i)}}(-z_i))\to \cdots \end{equation}
Since $H^0(\calO)/H^0(\calO(-Z_K))\simeq H^0(\calO_{Z_K})=\C^{p_g}$,
and $h^0(\calO_{E_{v(i)}}(-z_i))=\max\,\{0, 1-(E_{v(i)},z_i)\}$,
\begin{equation}\label{eq:PGC}
p_g=\sum_{i=0}^{t-1} \dim \, \frac{
H^0(\widetilde{X}, \calO(-z_{i}))}{ H^0(\widetilde{X}, \calO(-z_{i+1}))} \leq
\sum_{i=0}^{t-1}\max\,\{0, 1-(E_{v(i)},z_i)\} .\end{equation}
The dual statement of \ref{bek:b1}
 requires the existence of the sequence $\{z_i\}_i$ with equality in Equation (\ref{eq:PGC})
 for all $i$.
The duality is realized by $z_i:=Z_K-l_{t-i}$ and Serre duality.

[Indeed, if $z_{i+1}=z_i+E_{v(i)}$, $l_{j+1}=l_j+E_{u(j)}$, $z_i=Z_K-l_{t-i}$,
then with $j:=t-1-i$ one has $u(j)=v(i)$ and
$h^1(\calO_{E_{u(j)}}(-l_j))=h^0(\calO_{E_{v(i)}}(-z_i))$ by Serre duality.]

Since $H^1(\calO_{E_j})=H^1(\calO_E)=0$, in fact, in \ref{bek:b1} we need  a sequence $\{l_i\}_i$
which starts with $l_0=E$ and ends with $l_t=Z_K$ (otherwise is as in \ref{bek:b1}).

By dual considerations (since $H^0(\calO(-Z_K))\hookrightarrow H^0(\calO(-Z_K+E))$ is an isomorphism)
it is enough to construct a sequence $\{z_i\}_{i=0}^t$ which starts with $z_0=0$ and ends with
$z_t=Z_K-E$ (otherwise is as above).  This is what we will do.

\bekezdes\labelpar{bek:b3} {\bf The plan of the proof.} To a sequence $\{z_i\}_i$ of elements of $L$
(with $z_0=0$, $z_t=Z_K-E$, $z_{i+1}=z_i+E_{v(i)}$)  we associate the sequence
$\{\Gamma^e_+(z_i)\}_i$ of polytopes. Note that
$\Gamma^e_+(z_t)=\Gamma_+(f)-(1,1,1)\subset \Gamma^e_+(z_0)\subset (\R_{\geq -1})^3$.
Clearly $\Gamma^e_+(z_{i+1})\subset \Gamma^e_+(z_i)$.
This filtration  realizes a partition of
the lattice points
$(\Gamma_-(f)-(1,1,1))\cap \Z_{\geq 0}^3 $ by
\begin{equation}\label{eq:PI}
P_i:=(\, \Gamma^e_+(z_i)\setminus \Gamma^e_+(z_{i+1})\,)\cap \Z_{\geq 0}^3.
\end{equation}
Then, for the sequence $\{z_i\}_i$ provided by the algorithm \ref{def:alg} we show
that for $0\leq i<t$
\begin{equation}\label{eq:egy}
\max\,\{0, 1-(E_{v(i)},z_i)\}\leq |P_i|
\end{equation}
and
\begin{equation}\label{eq:ketto}
|P_i|\leq  \dim \, \frac{
H^0(\widetilde{X}, \calO(-z_{i}))}{ H^0(\widetilde{X}, \calO(-z_{i+1}))}.
\end{equation}
These two facts together with (\ref{eq:PGC}) show that in (\ref{eq:PGC}) we must have equality.

Note that based on (\ref{eq:egy}) and (\ref{eq:PGC}) together with the result of Merle--Teissier \cite{MT},
which says that $|\Gamma_-(f)\cap\Z_{>0}^3|=p_g$, or in the above terms
 $\sum_i|P_i|=p_g$,  we could already conclude our theorem. Nevertheless,
 providing an independent argument for the additional
(\ref{eq:ketto}),  besides the proof of our theorem we reprove the Merle--Teissier result as well.

\vspace{2mm}

Some of the steps can be analysed easily.
\begin{lemma}\labelpar{lem:easy}
Assume that along the sequence  one has $(E_{v(i)},z_i)>0$ for some $i$. Then $P_i=\emptyset$, hence
(\ref{eq:egy}) is valid. Moreover, in (\ref{eq:ketto}) one has equality (with both sides zero).
\end{lemma}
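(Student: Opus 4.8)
The plan is to analyze the geometry of the polytopes $\Gamma^e_+(z_i)$ directly. Recall $\Gamma^e_+(z_i)=\cap_{v\in\calv^e}H^{\geq}_v(z_i)$, where $H^{\geq}_v(z_i)=\{p:\ell_v(p)\geq m_v(z_i)\}$ and $\ell_v=\langle N_v,\cdot\rangle$. Passing from $z_i$ to $z_{i+1}=z_i+E_{v(i)}$ changes exactly one defining inequality: $m_{v(i)}(z_i)$ increases by $1$, so the half-space $H^{\geq}_{v(i)}$ shrinks by pushing its bounding plane $H^=_{v(i)}(z_i)=\{p:\ell_{v(i)}(p)=m_{v(i)}(z_i)\}$ inward by one unit of $\ell_{v(i)}$. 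Thus $P_i=(\Gamma^e_+(z_i)\setminus\Gamma^e_+(z_{i+1}))\cap\Z_{\geq0}^3$ is precisely the set of lattice points of $\Gamma^e_+(z_i)$ lying on the hyperplane slab $\{p\in\Z_{\geq0}^3:\ell_{v(i)}(p)=m_{v(i)}(z_i),\ p\in\Gamma^e_+(z_i)\}$, i.e.\ the lattice points of the face (or would-be face) of $\Gamma^e_+(z_i)$ cut out by $E_{v(i)}$.

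First I would dispose of the easy case. If $(E_{v(i)},z_i)>0$, then I claim the half-space $H^{\geq}_{v(i)}(z_i)$ is already redundant in the presentation of $\Gamma^e_+(z_i)$ — equivalently, no lattice point of $\Gamma^e_+(z_i)\cap\Z_{\geq0}^3$ actually satisfies $\ell_{v(i)}(p)=m_{v(i)}(z_i)$ with equality while satisfying all the other constraints. The key input here is the relation $\chi(z_{i+1})-\chi(z_i)=1-(E_{v(i)},z_i)$ from paragraph \ref{bek:PCAn}, so the hypothesis $(E_{v(i)},z_i)>0$ says exactly $\chi(z_{i+1})\leq\chi(z_i)$, i.e.\ the $\max\{0,\chi(z_i)-\chi(z_{i+1})\}$ term — which by (\ref{eq:PG}) equals $h^1(\calO_{E_{v(i)}}(-z_i))$ — vanishes; dually $h^0(\calO_{E_{v(i)}}(-z_i))=\max\{0,1-(E_{v(i)},z_i)\}=0$. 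This immediately gives the right-hand side of (\ref{eq:egy}) equal to $0$, so (\ref{eq:egy}) reads $0\leq|P_i|$, trivially true. It also forces the inclusion $H^0(\widetilde X,\calO(-z_{i+1}))\hookrightarrow H^0(\widetilde X,\calO(-z_i))$ to be an isomorphism (the cokernel injects into $H^0(\calO_{E_{v(i)}}(-z_i))=0$), so the displayed quotient in (\ref{eq:ketto}) has dimension $0$.

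The remaining content is then to show $P_i=\emptyset$ under the same hypothesis, which makes (\ref{eq:ketto}) read $0\le 0$ with equality and closes the lemma. For this I would argue combinatorially: $|P_i|$ counts lattice points $p\in\Z_{\geq0}^3$ with $\ell_{v(i)}(p)=m_{v(i)}(z_i)$ and $\ell_w(p)\geq m_w(z_i)$ for all $w\neq v(i)$. Any such $p$ gives a section-theoretic obstruction; more precisely, translating through \ref{eq:zi} and Remark \ref{rem:wtdiv}, an integral point of $P_i$ would produce a monomial contributing to $H^0(\calO_{E_{v(i)}}(-z_i))$, forcing $h^0(\calO_{E_{v(i)}}(-z_i))>0$, i.e.\ $(E_{v(i)},z_i)<1$, contradicting $(E_{v(i)},z_i)>0$. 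I expect the main obstacle to be making this last identification precise — matching the lattice-point count $|P_i|$ with $h^0(\calO_{E_{v(i)}}(-z_i))$ (or at least establishing $|P_i|\leq h^0$) in exactly the borderline situation. This is genuinely a special case of the general correspondence (\ref{eq:egy})+(\ref{eq:ketto}) that the rest of Section \ref{s:proof(c)} must establish, and here one only needs the trivial direction; the honest approach is to note that the polytope face $\{p\in\Gamma^e_+(z_i):\ell_{v(i)}(p)=m_{v(i)}(z_i)\}$, when $(E_{v(i)},z_i)>0$, is either empty or has no interior lattice points in $\Z_{\geq0}^3$, and to deduce emptiness from the adjunction/intersection data recorded in Oka's algorithm (the defining equations (\ref{eq:5}) and (\ref{eq:4}) controlling how the supporting planes $H^=_w$ meet near $E_{v(i)}$).
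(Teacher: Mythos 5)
Your treatment of the easy parts is correct: the right-hand side of (\ref{eq:egy}) vanishes because $\max\{0,1-(E_{v(i)},z_i)\}=0$, and the quotient in (\ref{eq:ketto}) vanishes because by (\ref{eq:INPG}) it injects into $H^0(\calO_{E_{v(i)}}(-z_i))=0$. For $P_i=\emptyset$ you offer two routes and commit to neither; you should have committed to the first. A lattice point $p\in P_i$ gives, via Remark~\ref{rem:wtdiv}, a monomial $z^p$ with ${\rm div}(z^p)=\wt(z^p)\geq z_i$ and ${\rm div}_{v(i)}(z^p)=m_{v(i)}(z_i)<m_{v(i)}(z_{i+1})$, hence a nonzero class in $H^0(\calO(-z_i))/H^0(\calO(-z_{i+1}))$ --- which you have just shown is zero, a contradiction. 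That closes the lemma without needing the further ``honest approach'' you invoke.

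The paper's own proof is the combinatorial one you gestured at but did not execute, and it is shorter and self-contained. Given $p\in P_i$ and $v:=v(i)$, one has $\ell_v(p)=m_v(z_i)$, $\ell_w(p)\geq m_w(z_i)$ for $w\in\calv$, and $\ell_w(p)\geq 0$ for $w\in\calv^e\setminus\calv$ since $N_w,p\in\Z_{\geq 0}^3$; pairing Oka's relation (\ref{eq:5}) at $v$ with $p$ yields $0=b_v\ell_v(p)+\sum_{w\in\calv^e_v}\ell_w(p)\geq b_vm_v(z_i)+\sum_{w\in\calv_v}m_w(z_i)=(E_v,z_i)>0$, a contradiction. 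Two small corrections to your wording: ``the half-space $H^{\geq}_{v(i)}(z_i)$ is redundant'' is not the right gloss --- the face $\{\ell_{v(i)}=m_{v(i)}(z_i)\}\cap\Gamma^e_+(z_i)$ can have positive measure, what matters is only that it misses $\Z_{\geq0}^3$; and (\ref{eq:4}) plays no role, (\ref{eq:5}) alone suffices.
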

\begin{proof}
Assume that $P_i\not=\emptyset$,
set $p\in P_i$ and $v:=v(i)$. Then $\ell_v(p)=m_v(z_i)$ and $\ell_w(p)\geq m_w(z_i)$
for every $w\in \calv$. Moreover, since all the entries of $p$ are non-negative,
$\ell_w(p)\geq 0$ for every $w\in \calv^e\setminus \calv$ as well.
Therefore, by (\ref{eq:5}), $0=(b_v\ell_v+\sum_{w\in \calv^e_v}\ell_w)(p)
\geq b_vm_v(z_i)+\sum_{w\in \calv_v}m_w(z_i)=(E_v,z_i)>0$, a contradiction.
For the second statement use  (\ref{eq:INPG}).
\end{proof}
\subsection{The operation $l\mapsto c(l)$ and the ratio test.}\labelpar{ss:Laufer}
In the construction of the sequence $\{z_i\}_i$ a part of the `easy' steps are provided
by the next `completion operation'.

If $l=\sum_vm_vE_v\in L$, then we define the support of $l$ as $|l|:=\{v\in \calv\,:\, m_v\not=0\}$.

In the next proposition the meaning of the involved condition
is motivated  by the adjunction formula $(E_v,Z_K-E)=2-\delta_v$.
Note that $\delta_v\leq 2$ if $v\not\in\caln$.

\begin{proposition}\labelpar{prop:cl} \
(A) Fix $l\in L$.  Then there exists a unique
 element $c(l)\in L$ with
\begin{enumerate}
\item $m_n(c(l))=m_n(l)$ for all $n\in\caln$;
\item $(c(l),E_v)\leq 2-\delta_v$ for every $v\not\in\caln$;
\item $c(l)$ is minimal with properties (1) and (2).
\end{enumerate}
\noindent (Clearly, by (1), $c(l)$ depends only on $\{m_n(l)\}_{n\in\caln}$.)

\vspace{1mm}

\noindent (B) Let $l$ be
 as in (A), and $z\in L$ another cycle such that $z\leq c(l)$ and $m_n(z)=m_n(l)$ for all $n\in\caln$.
 We construct a `computation sequence' $\{x_j\}_{j=0}^T$ starting with $x_0=z$ as follows.
First, one takes $x_0=z$. Then, if $x_j$ is already constructed, and it does not satisfy
(A)(2), that is, there exists $v$ with $(x_j,E_v)>2-\delta_v$,
then $x_{j+1}=x_j+E_{v}$ for such  $v$. If $x_j$ satisfy (A)(2) then
we stop and $j=T$. Then, we claim that for any such computation sequence, $x_T=c(l)$.

\vspace{1mm}

\noindent (C) If $l_i\in L$,  such that $m_n(l_1)\leq m_n(l_2)$ for all $n\in\caln$, 
then $c(l_1)\leq c(l_2)$.

\vspace{1mm}

\noindent (D) If $G$ is not an $A_n$--graph, then $c(0)=0$. (For $A_n$ one has $c(0)=-E$).

\vspace{1mm}
\noindent (E)   $c(Z_K-E)= Z_K-E$.
\end{proposition}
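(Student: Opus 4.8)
The plan is to prove the five assertions (A)--(E) roughly in the order stated, using the linear-algebra structure coming from negative definiteness of the intersection form together with the special combinatorics of the chains (legs) of $G$, on which $\delta_v\le 2$.

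\textbf{Part (A) (existence, uniqueness, minimality).} First I would fix the values $m_n$, $n\in\caln$, and consider the set $\mathcal{C}$ of all $x\in L$ with $m_n(x)=m_n$ for $n\in\caln$ and $(x,E_v)\le 2-\delta_v$ for $v\notin\caln$. Nonemptiness is easy: taking all non-node coordinates very large makes $(x,E_v)$ very negative (since $(E_v,E_v)<0$ dominates), so $\mathcal{C}\neq\emptyset$. The heart is that $\mathcal{C}$ is closed under $\min$: if $x,y\in\mathcal{C}$, then for $v\notin\caln$, $(\min\{x,y\},E_v) \le \max\{(x,E_v),(y,E_v)\}$ — this is the standard Laufer-type inequality, using that off the diagonal the coefficients $(E_v,E_w)$ are $\ge 0$ and that at $v$ itself one of $x_v,y_v$ equals $\min\{x,y\}_v$. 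Hence $\min\{x,y\}\in\mathcal{C}$, and since $\mathcal{C}$ is bounded below (again by negative definiteness: the node coordinates are fixed, and a chain between fixed endpoints has bounded-below interior coordinates once the $E_v$-products are bounded above), $\mathcal{C}$ has a unique minimal element, which we call $c(l)$. Property (1) is built in; (3) is minimality; (2) is membership in $\mathcal{C}$. The parenthetical remark that $c(l)$ depends only on $\{m_n(l)\}_{n\in\caln}$ is immediate from the construction.

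\textbf{Part (B) (Laufer's computation sequence) and (C) (monotonicity).} For (B) I would argue exactly as in Laufer's classical algorithm for the fundamental cycle: start at $x_0=z\le c(l)$; inductively, if $x_j$ violates (A)(2) at some $v\notin\caln$, then I claim $x_j+E_v\le c(l)$ still. Indeed $x_j\le c(l)$, and $m_v(x_j)<m_v(c(l))$ because otherwise $(x_j,E_v)\le(c(l),E_v)\le 2-\delta_v$ using $x_j\le c(l)$ and the sign of the off-diagonal entries together with equality in the $v$-th coordinate — contradiction; so adding $E_v$ keeps us $\le c(l)$. The node coordinates never change (we only add $E_v$ for $v\notin\caln$), so they stay equal to $m_n(l)$. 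The process terminates since $c(l)-x_j$ strictly decreases in the $\ell^1$ sense; at termination $x_T\le c(l)$, $x_T\in\mathcal{C}$, so by minimality of $c(l)$ in $\mathcal{C}$ we get $x_T=c(l)$. Part (C) then follows: run the computation sequence of (B) for $l_2$ starting from $z:=c(l_1)$ — but first check $c(l_1)\le c(l_2)$ is not circular; instead, observe that $c(l_1)$ with node-coordinates raised to those of $l_2$ (only possible if we also possibly need to add node multiplicities, which we are not allowed to) — cleaner: apply (B) with the cycle $z$ obtained from $c(l_1)$ by replacing its node coordinates with $m_n(l_2)\ge m_n(l_1)$; since off-diagonal entries are $\ge 0$, raising node coordinates only decreases $(z,E_v)$ for $v\notin\caln$, so $z$ still satisfies (A)(2), hence $z\ge c(l_2)$? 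No — I want $z\le c(l_2)$; I'd instead note $c(l_2)$ restricted suitably dominates, and run (B) from a common lower cycle. The cleanest route: show directly from the $\min$-closure that $c(\min\{l_1,l_2\}_{\caln})=\min\{c(l_1),c(l_2)\}$ when node coordinates are comparable, giving (C) at once. \emph{This monotonicity bookkeeping is where I expect the most friction}, though it is routine.

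\textbf{Parts (D) and (E).} For (D): $c(0)$ is the minimal cycle with zero node-coordinates and $(c(0),E_v)\le 2-\delta_v$ for $v\notin\caln$. On a chain (leg) with $\delta_v\le 2$, the condition $2-\delta_v\ge 0$, and $x=0$ itself satisfies $(0,E_v)=0\le 2-\delta_v$ \emph{unless} $\delta_v=2$ and we need $\le 0$, which $0$ meets, so $c(0)=0$ — \emph{except} when some leg-end vertex has $\delta_v=1$ forcing nothing, but if $G$ is an $A_n$-graph every vertex has $\delta_v\le 2$ with the two ends having $\delta_v=1$, and then the minimality forces $c(0)=-E$ by a short direct check (this is the well-known anomaly: $A_n$ is the rational graph where $Z_K=0$, $E\neq 0$ behaves specially). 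So for $G$ not of type $A_n$, $c(0)=0$. For (E): $Z_K-E$ satisfies $(Z_K-E,E_v)=(Z_K,E_v)-(E,E_v)$; by adjunction $(Z_K,E_v)=(−K,E_v)=2+(E_v,E_v)$ hmm — directly, $(E_v,Z_K-E)=2-\delta_v$ is exactly the adjunction-type identity quoted just before the proposition (using $(K+E_v,E_v)=-2$ and $E=\sum E_w$), so $Z_K-E$ lies in $\mathcal{C}$ for $l=Z_K-E$ with equality everywhere; minimality is then the statement that nothing strictly smaller with the same node-coordinates still satisfies (A)(2), which follows because decreasing any non-node coordinate of $Z_K-E$ strictly increases $(x,E_v)$ past $2-\delta_v$ at that $v$ (off-diagonal entries $\ge 0$, diagonal $<0$). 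Hence $c(Z_K-E)=Z_K-E$. The main obstacle throughout is not any single deep idea but keeping the Laufer-style inequalities and the $A_n$ exceptional case correctly aligned with the definition of $c$.
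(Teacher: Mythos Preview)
Your treatment of (A) and (B) is correct and essentially identical to the paper's. The genuine gaps are in (C), (D), and (E).

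For (C), none of your attempts closes, and one contains a sign error: raising node coordinates \emph{increases} $(z,E_v)$ for $v\notin\caln$ (off-diagonal entries are $\geq 0$), so your cycle $z$ need not satisfy (A)(2). The paper's one-line argument is this: since $c(\cdot)$ depends only on node-multiplicities, assume $l_1,l_2$ are supported on $\caln$ with $l_1\le l_2$; then $c(l_2)-(l_2-l_1)$ has node-multiplicities $m_n(l_1)$, and for $v\notin\caln$ one has $(c(l_2)-(l_2-l_1),E_v)=(c(l_2),E_v)-(l_2-l_1,E_v)\le 2-\delta_v$ because $(l_2-l_1,E_v)\ge 0$. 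Hence $c(l_1)\le c(l_2)-(l_2-l_1)\le c(l_2)$.

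For (E), you correctly observe that $(E_v,Z_K-E)=2-\delta_v$ puts $Z_K-E$ in $\mathcal{C}$, giving $c(Z_K-E)\le Z_K-E$; but your minimality argument only rules out decreasing a \emph{single} coordinate, not a simultaneous decrease on several. The paper instead sets $l:=Z_K-E-c(Z_K-E)\ge 0$, notes that for every $v\in|l|$ (necessarily $v\notin\caln$) one has $(l,E_v)=(2-\delta_v)-(c(Z_K-E),E_v)\ge 0$, and concludes $l=0$ by negative definiteness. The same logical slip appears in (D): from $0\in\mathcal{C}$ you only get $c(0)\le 0$, not equality. Showing $c(0)\ge 0$ genuinely requires the chain-by-chain verification the paper alludes to (on each chain or leg of $G\setminus\caln$ with node-endpoints fixed at $0$, the minimal solution of $(x,E_v)\le 2-\delta_v$ is $0$), and the absence of any node-endpoint is exactly what makes the $A_n$ case exceptional.
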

\begin{proof}
The proof of (A)-(B) is an alteration of  Artin's proof of the existence of the fundamental cycle
\cite{Artin66} and of
the existence of the Laufer computation sequence providing this cycle \cite{Laufer72}.
The major steps are the following.
First, note that by the negative definiteness of the intersection form
there exists a cycle with properties (A)(1-2).
Then one shows (as in  \cite{Artin66})
that if  $c'(l)$ and $c''(l)$ satisfy
(A)(1-2) then $\min\{c'(l),c''(l)\}$ also satisfies them, hence there exists a unique minimal
element $c(l)$ satisfying (A)(1-2).
For (B), one checks
by induction that $x_j\leq c(l)$ for every $j$. Hence the sequence must stop and
 $x_T\leq c(l)$. Since
 $x_T$ satisfies (A)(1-2), and $c(l)$ is minimal with this property, $x_T=c(l)$.

(C)  $c(l_2)-(l_2-l_1)$ satisfies (A)(1-2) and restricted on $\caln$ is $l_1$, hence
$c(l_1)\leq c(l_2)-(l_2-l_1)$. 

(D) It is an elementary arithmetical verification on the chains of $G\setminus \caln$.

(E) Since  $Z_K-E$ satisfies the conditions (A)(1-2),  $l:=Z_K-E-c(Z_K-E)\geq 0$ by (A)(3).
But $(l,E_v)\geq 0$ for all $v$ it the support of $l$, hence by the negative definiteness of $G$
we have $l\leq 0$ too. Hence $l=0$.
\end{proof}


\begin{definition}\labelpar{ss:ratio}{\bf (The ratio test)} We fix  $l\in L$, and
we consider the ratio $R(v):=m_v(l)/m_v(Z_K-E)$ for every $v\in\calv$.
We say that the ratio test for $l$ chooses the vertex $v\in\calv$ if
$R(v)=\min\{R(w)\,|\,w\in\calv\}$ and $R(v)<R(w)<\infty$
for at least one  adjacent vertex $w$ of $v$.
(If $m_v(Z_K-E)=0$ then $R(v)=\infty$, which is `larger than any real number'.)
\end{definition}
\begin{lemma}\labelpar{lem:test}
Assume that for some effective  $l$ supported on $\caln$ one has $0<c(l)<Z_K-E$.
Then the ratio test for $c(l)$ always makes a choice, that is, it is not possible that
$v\mapsto R(v)$ is a constant on $|Z_K-E|$. Moreover, a chosen $v$ is in the support of
$Z_K-E-c(l)$.
\end{lemma}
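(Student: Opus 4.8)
The statement is an assertion about the function $v\mapsto R(v)=m_v(c(l))/m_v(Z_K-E)$ on the (connected) support of $Z_K-E$. The plan is to rule out the bad scenario, namely that $R$ is constant and finite on all of $|Z_K-E|$ (equivalently, that the ratio test makes no choice). First I would recall the setup: by hypothesis $l$ is effective and supported on $\caln$, and $0<c(l)<Z_K-E$. Since $c(l)$ agrees with $l$ on the nodes by Proposition~\ref{prop:cl}(A)(1), and $l$ is effective, we know $c(l)$ is effective on $\caln$; combined with $c(l)<Z_K-E$ this forces $c(l)$ to be a proper nonzero sub-cycle. The key structural input is Lemma~\ref{cor:zke}: since $(X,0)$ is not rational, $Z_K>E$ and $|Z_K-E|$ is connected, so it makes sense to talk about $R$ being constant on a connected set of vertices.

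\textbf{Key steps.} Suppose for contradiction that $R(v)=\rho$ for all $v\in|Z_K-E|$ with $\rho$ a fixed rational number, necessarily in $(0,1)$ because $0<c(l)<Z_K-E$ on that support (for $v\notin|Z_K-E|$ we have $m_v(Z_K-E)=0$, so $R(v)=\infty$ trivially, and since $c(l)$ is minimal such vertices must have $m_v(c(l))$ small too — but in fact for $v\notin|Z_K-E|$, i.e. $v$ not in the support, $Z_K-E$ has zero coefficient, and $c(l)$ being $\le Z_K-E$ would force $m_v(c(l))\le 0$). Then $c(l)=\rho\cdot(Z_K-E)$ as a $\Q$-cycle on $|Z_K-E|$, and writing $w:=Z_K-E-c(l)=(1-\rho)(Z_K-E)$ on that support, we would have $w$ proportional to $Z_K-E$. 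Now I would use the adjunction/numerical-Gorenstein relation $(Z_K-E,E_v)=2-\delta_v$ together with condition (A)(2), which says $(c(l),E_v)\le 2-\delta_v$ for $v\notin\caln$. For a vertex $v\in|Z_K-E|\setminus\caln$ (which exists since $|Z_K-E|$ is connected and strictly contains some non-node — indeed $Z_K-E$ has support beyond the nodes in the non-rational case, as chains are nontrivial by Prop.~\ref{prop:OKA}(c) in the Newton setting, though here it's the general minimal good graph), proportionality $c(l)=\rho(Z_K-E)$ gives $(c(l),E_v)=\rho(2-\delta_v)$. Since $\delta_v\le 2$ for $v\notin\caln$, we get $2-\delta_v\ge 0$, and I would check whether $\rho(2-\delta_v)$ versus $2-\delta_v$ and the constraints are compatible, extracting a contradiction either from negative-definiteness (a nonzero $\Q$-cycle proportional to $Z_K-E$ on a proper connected subset, with controlled intersection numbers, cannot exist) or from the boundary behavior where $c(l)$ meets $\caln$ (where it must equal $l$ exactly, an integer, while $\rho(Z_K-E)$ there need not be).

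\textbf{Main obstacle.} The delicate part is handling the boundary of $|Z_K-E|$ against $\caln$ and the vertices not in the support: I must show the proportionality constant $\rho$ cannot be engineered to be consistent simultaneously with (i) the integrality $m_n(c(l))=m_n(l)$ at nodes $n$, (ii) the inequalities $(c(l),E_v)\le 2-\delta_v$ off $\caln$, and (iii) $c(l)<Z_K-E$ strictly. I expect the cleanest route is: if $R$ were constant on $|Z_K-E|$, consider the cycle $w=Z_K-E-c(l)\ge 0$, which is then supported exactly on $|Z_K-E|$ and proportional to $Z_K-E$ there; for a vertex $v$ in the support of $w$ adjacent to something outside it (boundary vertex), compute $(w,E_v)$ and show it has the wrong sign for negative-definiteness — namely that $\sum_{v\in|w|}m_v(w)(w,E_v)=(w,w)<0$ is violated because each $(w,E_v)\ge 0$ when $w\propto Z_K-E$ and $2-\delta_v\ge$ the relevant bound, so $(w,w)\ge 0$, contradiction. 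Concluding that the ratio test makes a choice $v$; finally, that a chosen $v$ lies in $|Z_K-E-c(l)|$ follows since the test requires $R(v)<R(w)<\infty$ for some neighbor $w$, so $R(v)<1$, i.e. $m_v(c(l))<m_v(Z_K-E)$, i.e. $m_v(Z_K-E-c(l))>0$, which is exactly $v\in|Z_K-E-c(l)|$.
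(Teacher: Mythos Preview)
Your setup and your argument for the final sentence (that a chosen $v$ lies in $|Z_K-E-c(l)|$) are correct and match the paper: since $c(l)\le Z_K-E$, any finite value of $R$ is $\le 1$, so $R(v)<R(w)\le 1$ forces $m_v(c(l))<m_v(Z_K-E)$.

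However, your main argument for non-constancy of $R$ has a genuine gap. The negative-definiteness route you sketch does not work as stated. If $R\equiv\rho$ on $|Z_K-E|$ then $c(l)=\rho(Z_K-E)$ as $\Q$-cycles (both vanish off $|Z_K-E|$), hence $w:=Z_K-E-c(l)=(1-\rho)(Z_K-E)$ and $(w,E_v)=(1-\rho)(2-\delta_v)$ for every $v$. At nodes $\delta_v\ge 3$, so $(w,E_v)<0$ there, and your claim ``each $(w,E_v)\ge 0$'' fails; indeed $(w,w)=(1-\rho)^2(Z_K-E,Z_K-E)<0$, which is perfectly consistent with negative-definiteness and yields no contradiction. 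Your alternative route via integrality of $m_n(c(l))=m_n(l)$ at nodes only gives $\rho\cdot m_n(Z_K-E)\in\Z$ for each $n$, which does not force $\rho\in\Z$.

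The paper's proof exploits a much simpler integrality: pick a \emph{leaf} $w$ of the tree $G$, i.e.\ $\delta_w=1$ (such $w$ exists since $G$ is a tree with at least one node). Adjunction gives $(Z_K-E,E_w)=2-\delta_w=1$, so from $c(l)=\rho(Z_K-E)$ one gets $(c(l),E_w)=\rho$. But $c(l)\in L$ is an integral cycle, hence $(c(l),E_w)\in\Z$, contradicting $\rho\in(0,1)$. This one-line computation replaces both of your proposed routes.
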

\begin{proof}
Since the support of $Z_K-E$ is connected, cf. \ref{cor:zke},
if $R(v)$ is constant on this support then  $c(l)=r(Z_K-E)$ for some $r\in(0,1)$
(in the complement of this support both $c(l)$ and $Z_K-E$ are zero). Then for any $w$ with
$\delta_w=1$ we would get $r=r\cdot (Z_K-E,E_w)=(c(l),E_w)\in\Z$, a contradiction. Hence,
a choice $v$ exists. Moreover, $m_v(c(l))=m_v(Z_K-E)$ would imply
$1=R(v)={\rm min}_w\{R(w)\}$, or
$c(l)\geq Z_K-E$.
\end{proof}

\subsection{The algorithm of $\{z_i\}_i$}\labelpar{ss:princ}

\begin{definition}\labelpar{def:alg} The sequence $\{z_i\}_{i=0}^t$
is constructed by the following steps.

{\bf S1.} $\bar{z}_0=0$.

{\bf S2.} Assume that $\bar{z}_i$ is already constructed, and $\bar{z}_i<Z_K-E$.
If $i=0$ then we choose an
 arbitrary $E_v$ in the support of $Z_K-E$ and we write $E_{v(0)}:=E_v$. If $i>0$, then $\bar{z}_i>0$, and
the ratio test for $\bar{z}_i$ makes a choice (cf. Lemma \ref{lem:test}), say $v(i)$.
Then (in both cases) set $\bar{z}_{i+1}=c(\bar{z}_i+E_{v(i)})$.
Note that by Lemma \ref{lem:test} and Proposition
\ref{prop:cl} one has $ \bar{z}_i+E_{v(i)}\leq \bar{z}_{i+1}\leq Z_K-E$.

{\bf S3.} If $\bar{z}_{i+1}<Z_K-E$ then run Step
S2 again. If $\bar{z}_i=Z_K-E$ then stop and put $\bar{t}:=i$.
\end{definition}

Note that different choices might produce different sequences $\{\bar{z}_i\}_{i=1}^{\bar{t}}$.

For any $i\geq 0$ one has  $c(\bar{z}_i+E_{v(i)})\geq  \bar{z}_i+E_{v(i)}$. For $i=0$ this follows from
Proposition~\ref{prop:cl}(D), while for $i>0$ from part (C). Hence by \ref{prop:cl}(B)
  there exists  an `intermediate' sequence between
$\bar{z}_i+E_{v(i)}$ and $\bar{z}_{i+1}$.  Completing the above sequence with these
  intermediate steps we get the wished sequence $\{z_i\}_{i=0}^t$ for which
  $z_{i+1}-z_i$ is always a base element. Along these intermediate steps, cf.
  Lemma \ref{lem:easy}, the inequalities (\ref{eq:egy})--(\ref{eq:ketto}) are true with equalities,
  since $(E_{v(i)},x_j)>2-\delta_{v(i)}\geq 0$.

The very first step $0=z_0\mapsto z_1=E_{v(0)}$ is also easy: $(E_{v(0)},z_0)=0$,
$H^0(\calO)/H^0(\calO(-E_{v(0)}))$ is 1--dimensional, and since for any $v\in\calv$ all the entries
of $ N_v$ are positive, $(N_v,p)>0$ for all $\Z_{\geq 0}^3\setminus (0,0,0)$. Hence $P_0=\{(0,0,0)\}$, and it
has cardinality 1.

The next lemma analyzes the step $\bar{z}_i\mapsto \bar{z}_i+E_{v(i)}$ ($i>0$).
\begin{lemma}\labelpar{lem:stepbar} $\delta_{v(i)}=2$ cannot happen. If $\delta_{v(i)}=1$ then
$(\bar{z}_i,E_{v(i)})=1$.
\end{lemma}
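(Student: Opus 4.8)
The plan is to deduce both assertions from one short intersection-number computation built around the auxiliary cycle $l:=\bar{z}_i-r\,(Z_K-E)$, where $r:=R(v(i))$ is the minimal value $\min_{w\in\calv}R(w)$ realised by the vertex $v(i)$ that the ratio test selects at step $i>0$ (Definition~\ref{ss:ratio}). By the algorithm $\bar{z}_i=c(\cdot)$ is effective with $0<\bar{z}_i<Z_K-E$, and by Lemma~\ref{lem:test} the selected vertex lies in the support of $Z_K-E-\bar{z}_i$; in particular $m_{v(i)}(Z_K-E)>0$ and $r<\infty$. First I would note that minimality of $r$ gives $m_w(\bar{z}_i)\ge r\,m_w(Z_K-E)$ for all $w\in\calv$ --- when $m_w(Z_K-E)=0$ this is trivial since $\bar{z}_i\ge0$ and $r\ge0$ --- so $l\ge 0$, and that $m_{v(i)}(l)=m_{v(i)}(Z_K-E)\,\bigl(R(v(i))-r\bigr)=0$. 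Since the coefficient of $E_{v(i)}$ in $l$ vanishes, its self-intersection term drops out of $(l,E_{v(i)})$ and
\[
(l,E_{v(i)})=\sum_{w\ \textrm{adjacent to}\ v(i)}m_w(l)\ \ge\ 0,
\]
with equality if and only if $m_w(l)=0$ for every neighbour $w$ of $v(i)$.

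Next I would feed in the two defining constraints on $\bar{z}_i$. In both cases $v(i)\notin\caln$ (a node has valency $\ge3$), so Proposition~\ref{prop:cl}(A)(2) yields $(\bar{z}_i,E_{v(i)})\le 2-\delta_{v(i)}$, while the adjunction identity gives $(Z_K-E,E_{v(i)})=2-\delta_{v(i)}$; subtracting,
\[
(l,E_{v(i)})=(\bar{z}_i,E_{v(i)})-r\,(2-\delta_{v(i)}).
\]

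For $\delta_{v(i)}=2$ the right side is just $(\bar{z}_i,E_{v(i)})\le 0$, which together with $(l,E_{v(i)})\ge 0$ forces $(l,E_{v(i)})=0$ and hence $m_w(l)=0$ for every neighbour $w$ of $v(i)$. But then every neighbour satisfies $R(w)=r$ (if $m_w(Z_K-E)>0$) or $R(w)=\infty$ (if $m_w(Z_K-E)=0$), so $v(i)$ has no neighbour $w$ with $r<R(w)<\infty$ --- contradicting the definition of a vertex chosen by the ratio test. Hence $\delta_{v(i)}=2$ cannot occur. For $\delta_{v(i)}=1$ the identity reads $(l,E_{v(i)})=(\bar{z}_i,E_{v(i)})-r\ge 0$, so $(\bar{z}_i,E_{v(i)})\ge r\ge 0$; and if $(\bar{z}_i,E_{v(i)})$ equalled $0$, then $r=0$, $(l,E_{v(i)})=0$, the unique neighbour $u$ of $v(i)$ would satisfy $m_u(l)=0$, hence $m_u(\bar{z}_i)=0$ and $R(u)\in\{0,\infty\}$, again contradicting $r<R(u)<\infty$. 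Therefore $(\bar{z}_i,E_{v(i)})\ge 1$, and combined with the upper bound $(\bar{z}_i,E_{v(i)})\le 2-\delta_{v(i)}=1$ this gives $(\bar{z}_i,E_{v(i)})=1$.

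The intersection-number manipulations above are routine; the part needing thought is conceptual rather than computational, namely realising that the correct object to test is $l=\bar{z}_i-r\,(Z_K-E)$ precisely because $m_{v(i)}(l)=0$ annihilates the offending self-intersection term, and that the ratio test was designed so that the resulting equality $m_w(l)=0$ at all neighbours is absurd. A secondary technical point is the bookkeeping around the convention $R(w)=\infty$ for $w\notin|Z_K-E|$: one must check that minimality of $r$ among the \emph{finite} ratios still yields $l\ge 0$, and that ``no neighbour with $r<R(w)<\infty$'' really does contradict Definition~\ref{ss:ratio}.
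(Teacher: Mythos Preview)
Your argument is correct and is essentially the paper's proof unpacked. The paper compresses the whole thing into the single strict inequality $(\bar z_i,E_{v(i)})>r\,(2-\delta_{v(i)})$, obtained directly from the ratio test (the neighbour with $R(w)>r$ makes the inequality strict), and then invokes the $c$-property $(\bar z_i,E_{v(i)})\le 2-\delta_{v(i)}$; you instead make the auxiliary cycle $l=\bar z_i-r(Z_K-E)$ explicit, obtain the weak inequality $(l,E_{v(i)})\ge 0$, and analyse the equality case separately to recover strictness. The two routes are the same computation; yours is longer but more transparent about where the ratio-test hypothesis ``$r<R(w)<\infty$ for some neighbour'' is actually used.
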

\begin{proof} The choice of $v(i)$ implies
$$\frac{(\bar{z}_i, E_{v(i)})}{\sum m_w(\bar{z}_i)}
>\frac{(Z_K-E,E_{v(i)})}{\sum m_w(Z_K-E)}
= \frac{2-\delta_{v(i)}}{\sum m_w(Z_K-E)}\ ,$$
where the sums run over $w\in \calv_{v(i)}$. Then use the fact that $\bar{z}_i$ is $c$ of an element.
\end{proof}
\bekezdes\labelpar{bek:faces}
This shows (using again  Lemma \ref{lem:easy}) that what remains to verify is the validity of
(\ref{eq:egy})--(\ref{eq:ketto})
in the step $\bar{z}_i\mapsto \bar{z}_i+E_{v(i)}$,
whenever $i>0$, $v(i)$ is a face vertex  (that is, node with $\delta_{v(i)}\geq 3$)
and $(\bar{z}_i,E_{v(i)})\leq 0$. In the sequel we assume all these facts.

\vspace{2mm}

It is also convenient to define for any $n\in\caln$ and cycle $l>0$
\begin{equation*}\label{eq:FI}
F_n(l):=\big(   \cap _{w\in \calv\setminus n} H^{\geq }_w(l)\big)\cap H^=_{n}(l),\ \ \mbox{and } \ \
F_n^{nb}(l):=  \big( \cap _{w\in \calv_{n}}  H^{\geq }_w(l)\big)\cap H^=_{n}(l).
\end{equation*}
Let  $C_n$ (resp. $C_n^{nb}$) be the real cone over $F_n(Z_K-E)$ (resp.
 $F_n^{nb}(Z_K-E)$) with vertex 0.

The definition of $F_n(l)$ should be compared with the definition of the
 face $F^e_n(l)$ of $\Gamma^e_+(l)$ which sits in $H^=_n(l)$, defined by
 $\big(\cap _{w\in \calv^e\setminus n} H^{\geq }_w(l)\big)
 \cap H^=_{n}(l)$ (which, having extra equations indexed by $\calv^e\setminus \calv$,
   is a subset of $F_n(l)$). 
Also, $F_n(l)\subset F_n^{nb}(l)$ since $\calv_n\subset \calv\setminus n$.

\begin{lemma}\label{lem:ZKE} Fix $n\in \caln$. Then
$ F_n^{nb}(Z_K-E)=F_n(Z_K-E)=F^e_n(Z_K-E)$. 
In particular, $C_n^{nb}=C_n$.
\end{lemma}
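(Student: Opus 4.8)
The plan is to prove the chain of inclusions $F^e_n(Z_K-E)\subseteq F_n(Z_K-E)\subseteq F_n^{nb}(Z_K-E)$ — both of which are immediate from the definitions, since $\calv^e\setminus n\supseteq \calv\setminus n\supseteq \calv_n$, so each set is cut out by a subset of the half-space constraints of the previous one — and then to establish the reverse inclusion $F_n^{nb}(Z_K-E)\subseteq F^e_n(Z_K-E)$, which is the only substantive point. So the whole statement reduces to showing: if a point $p\in H^=_n(Z_K-E)$ satisfies $\ell_w(p)\geq m_w(Z_K-E)$ for every $w\in\calv_n$ (the neighbors of $n$ in $G$ only), then in fact $\ell_w(p)\geq m_w(Z_K-E)$ holds for \emph{all} $w\in\calv^e$.

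First I would set up coordinates on the plane $H^=_n(Z_K-E)$ and use the linear relation (\ref{eq:5}), namely $b_nN_n+\sum_{w\in\calv^e_n}N_w=0$, which together with the numerical-Gorenstein adjunction relation $(E_n,Z_K-E)=2-\delta_n$ controls how the functions $\ell_w$ for $w$ adjacent to $n$ behave relative to $\ell_n$. The key geometric fact I would exploit is that for a node $n$, the plane $H^=_n(Z_K-E)$ is precisely the affine hull of the compact face $\bigtriangleup$ of $\Gamma_+(f)-(1,1,1)$ corresponding to $n$ (this is essentially the content of Lemma~\ref{lem:zkminuse} combined with Prop.~\ref{prop:OKA}), so $F^e_n(Z_K-E)$ is exactly that compact face $\bigtriangleup$, a bounded polygon. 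The half-space constraints indexed by the neighbors $w\in\calv^e_n$ of $n$ in the extended graph $G^e$ correspond exactly to the edges of $\bigtriangleup$ (each edge of $\bigtriangleup$ is shared with an adjacent face, compact or not, and the chain — or arrowhead — joining $n$ to that face carries the normal vector direction interpolating $N_n$ and the adjacent face's normal), so these finitely many constraints already cut out the bounded polygon $\bigtriangleup$ inside the plane. The point is then that the constraints from vertices $w$ lying \emph{on the chains} emanating from $n$, or on more distant parts of $G$, are redundant: a vertex on a chain has $N_w = r_1 N_{w_1}+r_2 N_{w_2}$ with $r_i\in\Q_{>0}$ for the two face-endpoints, so $\ell_w$ is a positive combination of the $\ell$'s already controlling things, and by linearity the constraint is implied on the relevant polygon.

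I would organize the argument as follows. Step 1: identify $F^e_n(Z_K-E)$ with the compact face $\bigtriangleup\subset\Gamma_+(f)-(1,1,1)$ via Lemmas \ref{prop:canonical} and \ref{lem:zkminuse}. Step 2: show $F_n^{nb}(Z_K-E)$ — using only the neighbors of $n$ in the \emph{non-extended} graph $G$ — already equals this polygon; here the subtlety is that we are using $\calv_n\subseteq\calv$, not $\calv^e_n$, so I must check that each bounding edge of $\bigtriangleup$ is already accounted for by a neighbor in $\calv$ (the vertices immediately adjacent to $n$ along the chains, whose normal vectors $N_{\bigtriangleup,\bigtriangledown}$ point "across" the edge shared with the adjacent face), except possibly for edges where the adjacent face is non-compact — and for those one invokes Prop.~\ref{prop:OKA}(c), which guarantees $n_{\bigtriangleup,\bigtriangledown}>1$, so there genuinely is a chain vertex in $\calv$ adjacent to $n$ in that direction carrying the right constraint. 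Step 3: conclude the full equality $F_n^{nb}=F_n=F^e_n$ by sandwiching, and note $C_n^{nb}=C_n$ follows by taking cones.

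The main obstacle I anticipate is Step 2: carefully checking that the constraints coming from the $\delta_n$ immediate neighbors of the node $n$ inside $\calv$ (rather than from the face-vertices of the adjacent compact faces, which would be the "obvious" choice but those are not neighbors of $n$ once chains are inserted) are exactly the edge-inequalities of the polygon $\bigtriangleup$. This requires unwinding the Oka algorithm: the vertex adjacent to $n$ along the chain toward a face $\bigtriangledown$ carries normal vector $N_{\bigtriangleup,\bigtriangledown}=(N_\bigtriangledown+c\,N_\bigtriangleup)/n_{\bigtriangleup,\bigtriangledown}$ (cf.\ (\ref{eq:333})), and one must verify that the half-space $\ell_w(p)\geq m_w(Z_K-E)$ for this $w$ restricts, on the plane $H^=_n(Z_K-E)$, to exactly the condition "$p$ lies on the correct side of the edge $\bigtriangleup\cap\bigtriangledown$". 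This is where the precise value $m_v(Z_K-E)$ on chain vertices, and the defining property $(E_v,Z_K-E)=2-\delta_v$ for $\delta_v\le 2$ chain vertices, must be combined — essentially a local computation on each chain, of the same flavor as Lemma~\ref{lem:zkminuse}'s proof but done edge-by-edge around the node.
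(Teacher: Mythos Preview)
Your proposal is correct and follows essentially the same route as the paper's proof. Both arguments note the trivial inclusions $F^e_n\subseteq F_n\subseteq F_n^{nb}$, invoke Proposition~\ref{prop:OKA}(c) to ensure $\calv_n^e=\calv_n$ (so every edge of $\bigtriangleup$ has an honest chain vertex in $\calv$ adjacent to $n$), and then reduce to the key computation that on $H^=_n(Z_K-E)$ the constraint from the immediate chain neighbor $w$ coincides with that from the adjacent face vertex $n'$ --- the paper states this as $H^=_n\cap H^=_{v_j}=H^=_n\cap H^=_{n'}$ for all $j$ on the chain, derived from the adjunction relation together with (\ref{eq:5}), which is exactly the ``obstacle'' you identify and plan to handle in the same way.
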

\begin{proof} With simplified notations we have
$F^{nb}_n\supset F_n\supset F^e_n$. Next note
that $\calv^e_n=\calv_n$, cf. \ref{prop:OKA}(c), that is, if
 $n'\in\calv^e\setminus \calv$ is connected by a chain to $n$, then this chain is
  non--empty.

 If $n'$ is another face vertex of $\calv$ connected to $n$ by a chain
 with vertices $v_1,\cdots ,v_k$, then all the intersections $H^=_{n}(Z_K-E)\cap H^=_{v_j}(Z_K-E)$ agree with the
 intersection  $H^=_{n}(Z_K-E)\cap H^=_{n'}(Z_K-E)$.
 This follows from the adjunction relation and identities (\ref{eq:5}).
 The same is true if
 $n'\in\calv^e\setminus \calv$, and $ H^=_{n'}(Z_K-E)$ is $\{p: \ell_{n'}(p)=-1\}$ (compatibly with
 the convention $m_{n'}=-1$, cf. Definition~\ref{def:H}(ii)).

 Therefore, $F^{nb}_n$ equals that polygon of $H_n^=$ which is cut out by the intersections of type
 $H_{n'}^{\geq}$, where $n'$ are as in the previous paragraph (neighbor extended face--vertices).
 But, by  Lemma~\ref{lem:zkminuse},
 this equals  the corresponding face of $\Gamma_+(f)-(1,1,1)$,  and also  $F^e_n$, since
 the $n$--face of $\Gamma_+(f)$ is also cut out by the equations given by  neighbour face--vertices  $n'$.
\end{proof}
\bekezdes\label{again}
Let us consider again the step $\bar{z}_i\mapsto \bar{z}_i+E_{v(i)}$ of the algorithm with restrictions as in
Paragraph \ref{bek:faces}. 

We still denote
$P_i:=(\Gamma^e_+(\bar{z}_i)\setminus \Gamma^e_+(\bar{z}_i+E_{v(i)}))\cap \Z_{\geq 0}^3$.

In the affine plane $H_i:=H^=_{v(i)}(\bar{z}_i)$ there are several
polygons:

$\bullet$  $F_i:=F_{v(i)}(\bar{z}_i)$ cut out by the half--spaces indexed by $\calv\setminus v(i)$;

$\bullet$  $F_i^{nb}:=F^{nb}_{v(i)}(\bar{z}_i)$ cut out by the half--spaces indexed by the `neighbours' $\calv_{v(i)}$;

$\bullet$  $F_i^{cn}:=C_{v(i)}\cap H_i$ cut out by the cone over $F_{v(i)}(Z_K-E)$.

\vspace{2mm}

By Lemma~\ref{lem:ZKE}, the cone $C_{v(i)}$ is given by the inequalitites
\begin{equation*}\label{eq:con}
\frac{\ell_w(p)}{m_w(Z_K-E)}\geq \frac{\ell_{v(i)}(p)}{m_{v(i)}(Z_K-E)}
\ \ \ \mbox{for all $w\in\calv_{v(i)}$}.
\end{equation*}
This restricted to $H_i=\{l_{v(i)}(p)=m_{v(i)}(\bar{z}_i)\}$ transforms into
\begin{equation}\label{eq:cone}
\ell_w(p)\geq \frac{m_{v(i)}(\bar{z}_i)}{m_{v(i)}(Z_K-E)} \cdot \frac{m_w(Z_K-E)}{m_{w}(\bar{z}_i)}
\cdot m_w(\bar{z}_i)= r_{i,w}\cdot  m_w(\bar{z}_i) \ \ \ \ \ \ (w\in\calv_{v(i)}).
\end{equation}
By the ratio test $r_{i,w}\leq 1$. The inequalities which cut out $F_i^{nb}$ in $H_i$ are
$\ell_w(p)\geq m_w(\bar{z}_i)$ ($w\in\calv_{v(i)}$), hence they are more restrictive
than those from (\ref{eq:cone}). Hence  $F_i^{nb}\subset F_i^{cn}$.

For  technical purposes we need another polygon  $F_i^{cn-}$ satisfying
$F_i^{nb}\subset F_i^{cn-}\subset F_i^{cn}$.

First we associate to any $w\in\calv_{v(i)}$ an integer $\epsilon_j\in\{0,1\}$.
Let us fix $w$ and let  $I_w$ be that edge of $F_i^{cn}$ which is determined by
equality $\ell_w(p)=r_{i,w}\cdot  m_w(\bar{z}_i)$. Then $\epsilon_w=0$ except if the following happens:
$I_w$ contains at least one  lattice point (hence $r_{i,w}\cdot  m_w(\bar{z}_i)\in \Z$),
and $r_{i,w}\cdot  m_w(\bar{z}_i)< m_w(\bar{z}_i)$. In this case $\epsilon_w=1$.
We define $F_i^{cn-}\subset H_i$ cut out by the inequalities $\ell_w(p)\geq r_{i,w}\cdot  m_w(\bar{z}_i)+\epsilon_w$
for all $w\in\calv_{v(i)}$.  Hence
\begin{equation}\label{eq:FINCL}
F_i\subset F_i^{nb}\subset F_i^{cn-}\subset F_i^{cn}.\end{equation}
 The next lemma compares the lattice points of these polygons.

\begin{lemma}\labelpar{lem:cone}
$P_i=F_i\cap \Z_{\geq 0}^3 =F_i^{nb}\cap \Z_{\geq 0}^3=F_i^{cn-}\cap \Z_{\geq 0}^3$.
\end{lemma}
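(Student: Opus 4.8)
The plan is to prove the three-fold equality in Lemma~\ref{lem:cone} by a chain of inclusions, using the inclusions of polytopes already recorded in (\ref{eq:FINCL}). First I would settle the identity $P_i = F_i \cap \Z_{\geq 0}^3$. Recall $P_i = (\Gamma^e_+(\bar z_i)\setminus \Gamma^e_+(\bar z_i + E_{v(i)}))\cap \Z_{\geq 0}^3$. A lattice point $p\in\Z_{\geq 0}^3$ lies in $\Gamma^e_+(\bar z_i)$ iff $\ell_w(p)\geq m_w(\bar z_i)$ for all $w\in\calv^e$; it fails to lie in $\Gamma^e_+(\bar z_i + E_{v(i)})$ iff $\ell_w(p) < m_w(\bar z_i + E_{v(i)})$ for some $w$. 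Since $\bar z_i + E_{v(i)}$ differs from $\bar z_i$ only in the $v(i)$-coordinate (increased by $1$), and $m_w(l) = -1$ for $w\in\calv^e\setminus\calv$ independently of $l$, the only $w$ that can witness the failure is $w = v(i)$, and then the condition $\ell_{v(i)}(p)\geq m_{v(i)}(\bar z_i)$ together with $\ell_{v(i)}(p) < m_{v(i)}(\bar z_i) + 1$ and integrality of $\ell_{v(i)}(p)$ (since $N_{v(i)}\in\Z^3$ and $p\in\Z^3$) force $\ell_{v(i)}(p) = m_{v(i)}(\bar z_i)$, i.e. $p\in H^=_{v(i)}(\bar z_i) = H_i$. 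So $p\in P_i$ iff $p\in H_i$ and $\ell_w(p)\geq m_w(\bar z_i)$ for all $w\in\calv^e\setminus v(i)$; restricting to $w\in\calv$ (the remaining constraints $w\in\calv^e\setminus\calv$ are $\ell_w(p)\geq -1$, automatic for $p\geq 0$ since the entries of $N_w$ are non-negative) this is exactly $p\in F_i\cap\Z_{\geq 0}^3$. Actually one should double-check whether the extended constraints ever cut into $F_i$ for non-negative lattice points; as just noted they do not, because $\ell_w(p)\geq 0 > -1$ whenever $p\in\Z_{\geq 0}^3$.

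Next, the inclusions $F_i\cap\Z_{\geq 0}^3 \subseteq F_i^{nb}\cap\Z_{\geq 0}^3 \subseteq F_i^{cn-}\cap\Z_{\geq 0}^3$ are immediate from (\ref{eq:FINCL}). So the entire content of the lemma is the reverse inclusion $F_i^{cn-}\cap\Z_{\geq 0}^3 \subseteq F_i\cap\Z_{\geq 0}^3$, and since $F_i$ is cut out inside $H_i$ by the half-spaces $\ell_w(p)\geq m_w(\bar z_i)$ for $w\in\calv\setminus v(i)$, while $F_i^{cn-}$ involves only the neighbour constraints $w\in\calv_{v(i)}$, one reduces (by the same "non-negative lattice points satisfy non-neighbour constraints automatically" argument used in the proof of Lemma~\ref{lem:easy}, via the identity (\ref{eq:5})) to showing: if $p\in\Z_{\geq 0}^3\cap H_i$ satisfies $\ell_w(p)\geq r_{i,w} m_w(\bar z_i) + \epsilon_w$ for all $w\in\calv_{v(i)}$, then in fact $\ell_w(p)\geq m_w(\bar z_i)$ for all $w\in\calv_{v(i)}$. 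This is where the integer $\epsilon_w$ and the definition of $F_i^{cn-}$ do their work: $\ell_w(p)$ is an integer, and $r_{i,w}m_w(\bar z_i)\leq m_w(\bar z_i)$ by the ratio test. If $r_{i,w}m_w(\bar z_i)\notin\Z$ then $\ell_w(p)\geq r_{i,w}m_w(\bar z_i)$ already forces $\ell_w(p)\geq \lceil r_{i,w}m_w(\bar z_i)\rceil$; the remaining subtlety is only when $r_{i,w}m_w(\bar z_i)$ is an integer strictly less than $m_w(\bar z_i)$, and then $\epsilon_w = 1$ bumps the bound up, so $\ell_w(p)\geq r_{i,w}m_w(\bar z_i)+1$; one then wants this to already be $\geq m_w(\bar z_i)$. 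I expect this to be the crux of the argument.

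The hard part, then, is exactly controlling that last gap: why can $\ell_w(p)$ not land strictly between $r_{i,w}m_w(\bar z_i)+\epsilon_w$ and $m_w(\bar z_i)$? My plan is to exploit the precise geometry: $\bar z_i = c(\bar z_i)$ is "$c$ of an element", and the ratio test was applied; the edges $I_w$ of the cone $F_i^{cn}$ correspond to the rays of the cone $C_{v(i)} = C_{v(i)}^{nb}$ over the genuine face $F_{v(i)}(Z_K-E)$ of $\Gamma_+(f)-(1,1,1)$ (Lemma~\ref{lem:ZKE}), so the combinatorics of which edges carry interior lattice points is governed by Lemma~\ref{lem:13} — at most one edge of each face has interior lattice points, and such an edge lies in a coordinate plane. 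This should mean that for each $w\in\calv_{v(i)}$ the "dangerous" situation ($r_{i,w}m_w(\bar z_i)$ an integer $< m_w(\bar z_i)$, with lattice points on $I_w$) arises in a rigid way, and the $+\epsilon_w$ shift lands precisely at or above $m_w(\bar z_i)$ because the difference $m_w(\bar z_i) - r_{i,w}m_w(\bar z_i)$ is forced to be $\leq 1$ in that case (this is presumably where $t_{\bigtriangleup,\bigtriangledown} = 1$, i.e. the $\Q HS^3$ hypothesis via (\ref{eq:rhs}), enters). I would make this quantitative by writing $r_{i,w} = \frac{m_{v(i)}(\bar z_i)\, m_w(Z_K-E)}{m_{v(i)}(Z_K-E)\, m_w(\bar z_i)}$ as in (\ref{eq:cone}), clearing denominators, and analysing the resulting divisibility condition on the lattice $H_i\cap\Z^3$, whose fundamental domain in the relevant edge direction is tied to $n_{\bigtriangleup,\bigtriangledown}$ and the continued-fraction data of the chain between $v(i)$ and $w$. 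Once that case analysis closes, combining all four displayed sets gives the chain of equalities and the lemma follows.
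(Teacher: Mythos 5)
Your proof of the first equality $P_i=F_i\cap\Z_{\geq0}^3$ is correct and matches the paper: integrality of $\ell_{v(i)}$ forces the stripped-out lattice points onto $H_i$, and the extended ($\calv^e\setminus\calv$) constraints are vacuous on $\Z_{\geq0}^3$. You also correctly reduce, via (\ref{eq:FINCL}), to showing $F_i^{cn-}\cap\Z^3_{\geq0}\subseteq F_i\cap\Z^3_{\geq0}$ (equivalently $\subseteq P_i$). The gap is in how you propose to close that inclusion, which you acknowledge you do not carry out.

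Your plan is a purely \emph{local} arithmetic analysis at the single step $i$: write $r_{i,w}$ explicitly, clear denominators, and try to force $\ell_w(p)\geq m_w(\bar z_i)$ directly from $\ell_w(p)\geq r_{i,w}m_w(\bar z_i)+\epsilon_w$ and integrality. As you yourself note, the dangerous case is $r_{i,w}m_w(\bar z_i)$ integral and strictly less than $m_w(\bar z_i)$; the $+\epsilon_w$ bump gives you only one extra integer, and there is no reason the gap $m_w(\bar z_i)-r_{i,w}m_w(\bar z_i)$ is at most $1$ — in general it is not, and your plan to derive that bound from $t_{\bigtriangleup,\bigtriangledown}=1$ or from divisibility data along the chain is not substantiated. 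The paper closes the inclusion by an entirely different, \emph{global} argument: run the whole algorithm from $z_0=0$ to $z_t=Z_K-E$, observe that each lattice point of $(\Gamma_+(f)-(1,1,1))\cap\Z^3_{\geq0}$ is passed exactly once (at a unique node step), and that whenever passed at step $j$ it lies in the cone $C_{v(j)}$. A point $p_0$ in the interior of $C_{v(i)}$ can therefore only be passed at step $i$. If instead $p_0$ lies on a shared edge $I_w$ of $C_{v(i)}$ and $C_{n'}$, the paper rules out the possibility of $p_0$ having been passed at an earlier step $j$ with $v(j)=n'$ by proving the Claim that $\ell_{v(i)}(p_0)=m_{v(i)}(\bar z_i)$ and $\ell_{n'}(p_0)<m_{n'}(\bar z_i)$ force $\ell_w(p_0)<m_w(\bar z_i)$ (via the affine combination $\ell_w=a\ell_{v(i)}+b\ell_{n'}$ with $a,b>0$ and the opposite inequality on the $m$'s from Proposition~\ref{prop:cl}(A)(2)), whence $\epsilon_w=1$ and $p_0\notin F_i^{cn-}$. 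This global bookkeeping plus the Claim is the missing idea; the $\epsilon_w$'s are engineered precisely so that the Claim yields the contradiction, rather than to produce a local lower bound of the kind you are after.

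A secondary issue: you also invoke a reduction from the $\calv\setminus v(i)$ constraints defining $F_i$ to the neighbour constraints $\calv_{v(i)}$, citing the argument of Lemma~\ref{lem:easy}. That lemma proves something else ($P_i=\emptyset$ when $(E_{v(i)},z_i)>0$), using (\ref{eq:5}) at a single vertex under the hypothesis that \emph{all} constraints already hold; it does not show that neighbour constraints imply non-neighbour ones. In the paper this reduction never happens — the equality $F_i^{nb}\cap\Z^3_{\geq0}=F_i\cap\Z^3_{\geq0}$ is obtained only as a byproduct of the sandwich $P_i\subseteq F_i\cap\Z^3_{\geq0}\subseteq F_i^{nb}\cap\Z^3_{\geq0}\subseteq F_i^{cn-}\cap\Z^3_{\geq0}\subseteq P_i$.
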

\begin{proof}
The first equality  follows from the fact that $\ell_{v(i)}$ takes integral values on the lattice points,
hence all the lattice points of
$\Gamma^e_+(\bar{z}_i)\setminus \Gamma^e_+(\bar{z}_i+E_{v(i)})$ are on the boundary face
 $F_i$.

 For the other  ones, via (\ref{eq:FINCL}), it is enough to show that
 $F_i^{cn-}\cap \Z_{\geq 0}^3\subset P_i$. Take $p_0\in F_i^{cn-}\cap \Z_{\geq 0}^3$
 and we wish to show that $p_0\in P_i$.

 Run the algorithm from $z_0=0$ to $z_t=Z_K-E$. Replacing $\Gamma^e_+(z_k)$
 by $\Gamma^e_+(z_{k+1})$, we say that we pass the lattice points in their difference.
 Along the
 algorithm we have to pass all the lattice points of $(\Gamma_+(f)-(1,1,1))\cap \Z_{\geq 0}^3$,
  each of them exactly once.
 By the above discussion (see also \ref{lem:easy}), we pass a lattice point only at step of type
 $\bar{z}_j\mapsto \bar{z}_j+E_{v(j)}$, where $v(j)\in\caln$. Along this step we pass exactly the lattice points
 $P_j$. Moreover, by (\ref{eq:FINCL})   applied  for $v(j)$,
 these lattice points are situated in the cone $C_{v(j)}$.

 Hence, if our chosen  lattice point
 $p_0$ is situated only in the cone $C_{v(i)}$, then we can pass it only at step $v(i)$ by the
 plane $H_i$, hence it is in $P_i$.

 Assume next that $p_0$ is situated at the intersection of two cones $C_{v(i)}$ and
 $C_{n'}$, hence on  an edge $I_w$ of $F_i^{cn}$ ($w$ being on the chain connecting $v(i)$ and $n'$).

 Then along the algorithm we pass $p_0$ either at the step $v(i)$ by $H_i$, or before this step.
 In the first case $p_0\in P_i$. We show that the second case cannot happen.

 Indeed, if we pass $p_0$ at step $j<i$ with $v(j)=n'$, then
 $\ell_{v(j)}(p_0)=m_{v(j)}(\bar{z}_j)$. Since $\bar{z}_i\geq \bar{z}_j+E_{v(j)}$, one gets that
  $\ell_{v(j)}(p_0)<m_{v(j)}(\bar{z}_i)$.

  \vspace{2mm}

  \noindent {\bf Claim.}  $\ell_{v(i)}(p_0)=m_{v(i)}(\bar{z}_i)$ and
   $\ell_{v(j)}(p_0)<m_{v(j)}(\bar{z}_i)$ implies    $\ell_{w}(p_0)<m_{w}(\bar{z}_i)$.

 \vspace{2mm}

 \noindent {\it Proof of Claim}:
Write $n=v(i)$. Claim follows from the existence of  positive rational numbers $a$ and $b$ with
($\dagger$) $\ell_{w}=a\ell_n+b\ell_{n'}$ and  ($\ddagger$) $m_{w}\geq am_n+bm_{n'}$.

The numbers $a$ and $b$ are the following.
Let $(w,v_1,\ldots, v_k)$ be the chain connecting $n$ and $n'$.
Let $d$ and $a'$ be the determinant of the chains
$(w,v_1,\ldots, v_k)$ and     $(v_{1},\ldots, v_k)$  (where
the determinant of the empty graph is 1). Then  $a=a'/d$ and $b=1/d$.
This follows from weighted summation of identities (\ref{eq:5}) for ($\dagger$) and of inequalities (\ref{prop:cl})(2)
 for  ($\ddagger$).

\vspace{2mm}

Hence, we have $\ell_{w}(p_0)<m_{w}(\bar{z}_i)$, which implies that $\epsilon_w=1$, hence
by the construction of $F_i^{cn-}$ we get $p_0\not\in F_i^{cn-}$, a contradiction.
\end{proof}
\begin{remark}
By Oka's algorithm \ref{alg}, the greatest common divisor of $2$--minors of
the $2\times 3$--matrix form by $N_{v}$ and $N_w$ of two neighbor vertices $(v,w)$ is one.
 Hence $\ell_w$ is primitive linear function on $H_i$, and $F_i^{cn-}\cap \Z^3_{\geq 0}$
 is obtained from $F_i^{cn}\cap\Z^3_{\geq 0}$ by eliminating
 $I_w\cap \Z^3_{\geq0}$ with $\epsilon_w=1$ (and no other points).
\end{remark}

\subsection{Polygons in an affine space}\labelpar{ss:POLY}\
In this section we establish some combinatorial facts about triangles
and trapezoids in affine planes of $\R^3$. (For motivation see Lemma \ref{lem:13}.)

\begin{definition} \label{def:standard_small}
A triangle or trapezoid $\bigtriangleup_{st}$
 in an affine plane $A_{st}$ is \emph{standard} if its vertices are integral points,
it contains no integral points in its interior and at most one of its edges
contain integral points in its interior.
If a standard polygon has an edge with integral points in its interior, call
that side the \emph{long} edge, the others the \emph{short} edges.
Otherwise, any edge may be referred to as long or short. In this way, all
standard polygons $\bigtriangleup_{st}$ have a long edge.
Let  $t(\bigtriangleup_{st})-1$ be the number of interior lattice points of the long edge.
(In the case of a trapezoid the long edge is automatically parallel to another edge, cf. Lemma \ref{lem:13}.)

Let $\bigtriangleup_{st}$ be  a standard polygon  with
edges $\{I_w\}_w$. To each edge $I_w$  we associate an integral affine  function
$\frl_{st,w}$, which is constant along $I_w$, $\frl_{st,w}|_{\bigtriangleup_{st}}\geq \frl_{st,w}|_{I_w}$,
 and its restriction to $A_{st}$ is primitive. ($\frl_{st,w}$ is primitive if its restriction
$\frl_{st,w}:A_{st}\cap \Z^3\to\Z$ is onto.)
The restriction of $\frl_{st,w}$ to $A_{st}$  is unique up to an additive integral constant.
\end{definition}

\bekezdes\labelpar{ex:trap}
Assuming that the link of the singularity defined by $f$ is a rational homology
sphere (hence (\ref{eq:rhs}) is valid), all faces of $\Gamma_+(f)$
are either standard triangles or standard
trapezoids, cf \cite[2.3]{BN} or \ref{ss:NN} here.
(There is only one exception to this, namely the Newton polygon of
$z_1^{2a}+z_2^{2b}+z_3^{2c}$, $a,b,c$ pairwise relative primes, in which case
three edges contain
interior lattice points. Since this is a weighted homogeneous Newton diagram,
which case is covered by
part (a) of Theorem \ref{th:main} whose proof already was established,
we can assume that our Newton
diagram is not of this type. Alternatively, the interested reader might run the argument below
slightly modified to check this case too.)
Moreover, that edge of such a face which contains integral interior points
is necessarily contained in a coordinate plane.

Accordingly, in our  application and proof,  the  standard
polygons are $\bigtriangleup_{st}=F^{nb}_{v(i)}(Z_K-E)$, which
by  Lemma~\ref{lem:ZKE}, are the faces of $\Gamma_+(f)$ shifted by $-(1,1,1)$.

\begin{lemma}\labelpar{lem:comp} Consider $\bigtriangleup_{st}=F^{nb}_{v(i)}(Z_K-E)$ with
 long edge $I_1$, and an adjacent short edge $I_2$.
Then the map $(\frl_{st,1},\frl_{st,2}):A_{st}\cap\Z^3 \to \Z^2$ is a $\Z$-affine isomorphism.
\end{lemma}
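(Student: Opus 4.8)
The plan is to reduce the statement to a purely lattice-theoretic fact about the standard polygon $\bigtriangleup_{st}$ and its edge functions, using the explicit description of the faces of $\Gamma_+(f)$ from Lemma~\ref{lem:13}. Since $\bigtriangleup_{st}=F^{nb}_{v(i)}(Z_K-E)$ is a face of $\Gamma_+(f)$ shifted by $-(1,1,1)$ (Lemma~\ref{lem:ZKE}), it is either a standard triangle or a standard trapezoid, with the long edge $I_1$ (if it contains interior lattice points) sitting in a coordinate plane. First I would note that the map $(\frl_{st,1},\frl_{st,2})\colon A_{st}\cap\Z^3\to\Z^2$ is $\Z$-affine by construction, so it suffices to show it is bijective; by translating we may normalize so that a common vertex $v$ of $I_1$ and $I_2$ is sent to $(0,0)$, and then it suffices to show that the linear parts of $\frl_{st,1},\frl_{st,2}$ form a $\Z$-basis of the dual of the lattice $A_{st}\cap\Z^3 - v$ (which is $\Z^2$).

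Next I would set up coordinates concretely. Using Lemma~\ref{lem:13}, the vertices of a trapezoidal face are $(p,0,n),(0,q,n),(r_1,r_2+tq,0),(r_1+tp,r_2,0)$ with $\gcd(p,q)=1$; the long edge is the one lying in $\{z_3=0\}$ (the edge from $(r_1,r_2+tq,0)$ to $(r_1+tp,r_2,0)$, whose direction vector is $t\cdot(p,-q,0)$, so it has $t-1$ interior lattice points — but recall $t=1$ when both adjacent faces are compact, by \eqref{eq:rhs}; the genuinely long edges with $t>1$ arise only against coordinate planes). Then $\frl_{st,1}$ is, up to constant, the function $z_3$ (which is primitive on $A_{st}\cap\Z^3$), and $\frl_{st,2}$ is the primitive affine function vanishing on the short edge $I_2$. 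The key computation is that the pair of primitive normals (restricted to $A_{st}$) of a long edge lying in $\{z_3=0\}$ and of an adjacent short edge always spans the dual lattice — equivalently, that the vertex $v$ shared by $I_1$ and $I_2$ is a \emph{smooth} (unimodular) vertex of the polygon $\bigtriangleup_{st}$ inside the plane $A_{st}$. This is exactly where the hypothesis that $M$ is a rational homology sphere enters: the edge $I_1$ of the face lies in a coordinate plane and Oka's algorithm (paragraph~\ref{alg}, and the remark after Lemma~\ref{lem:cone}) guarantees the relevant gcd-of-$2$-minors conditions equal $1$, so the vertex is unimodular.

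Concretely, I would argue as follows. Fix a vertex $v$ of $\bigtriangleup_{st}$ incident to the long edge $I_1$ and a short edge $I_2$; let $e_1$ be the primitive integral direction vector of $I_1$ and $e_2$ that of $I_2$, both based at $v$. The claim that $(\frl_{st,1},\frl_{st,2})$ is an isomorphism is equivalent to $\{e_1,e_2\}$ being a $\Z$-basis of $A_{st}\cap\Z^3 - v$, i.e.\ the parallelogram spanned by $e_1,e_2$ has normalized area $1$. In the trapezoid case with $I_1\subset\{z_3=0\}$ we have $e_1=(p,-q,0)$ (primitive since $\gcd(p,q)=1$) and $e_2$ the primitive vector along, say, $(p,0,n)-(r_1+tp,r_2,0)$; a direct $3\times 3$ determinant (together with one $2\times 2$ minor) computation shows the wedge $e_1\wedge e_2$ is a primitive vector of $\Lambda^2\Z^3$, hence $\{e_1,e_2\}$ spans $A_{st}\cap\Z^3-v$. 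For a triangular face, the corresponding computation uses that the isolated-singularity condition \eqref{eq:IS} and the $\Q HS^3$ condition \eqref{eq:rhs} force every vertex of the triangle to be unimodular — no interior lattice points on two of the three edges, no interior lattice points in the triangle — and a short case analysis on which coordinate plane $I_1$ lies in. Finally, surjectivity onto $\Z^2$ follows since $\frl_{st,1},\frl_{st,2}$ are each individually primitive and their linear parts form a basis, and injectivity is automatic from the determinant being $\pm 1$.

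The main obstacle I anticipate is the bookkeeping in the case analysis: the face may be a triangle or a trapezoid, the coordinates must be permuted so that the long edge sits in the correct coordinate plane, and in the trapezoid case one must carefully track which of $I_2$ and its parallel partner is "short" and confirm that the relevant determinant is $\pm1$ regardless of the size parameters $t,r_1,r_2$. I expect no conceptual difficulty — everything is forced by Lemma~\ref{lem:13} plus $\gcd(p,q)=1$ plus the $\Q HS^3$ hypothesis — but getting the normalization of $\frl_{st,1},\frl_{st,2}$ (they are only defined up to additive integer constants, and one must check the constants can be chosen so the map is affine-\emph{iso}, not merely affine-surjective after allowing shifts) clean and making the determinant computation transparent is the part that needs care.
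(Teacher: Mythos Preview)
Your approach is correct but takes a much longer route than the paper's. You correctly identify that the claim is equivalent to the vertex $v=I_1\cap I_2$ being unimodular, i.e.\ that the primitive edge vectors $e_1,e_2$ span the lattice $A_{st}\cap\Z^3-v$, i.e.\ that the parallelogram they span has normalized area $1$. But then you propose to verify this via explicit coordinates from Lemma~\ref{lem:13}, a determinant computation, and a case split on triangles versus trapezoids and on coordinate planes.

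The paper's proof is a single sentence: since $\bigtriangleup_{st}$ has no interior lattice points, neither does the parallelogram spanned by $e_1,e_2$. More precisely, the triangle $[v,\,v+e_1,\,v+e_2]$ sits inside $\bigtriangleup_{st}$ (its vertices are on $\partial\bigtriangleup_{st}$, so by convexity it is contained in $\bigtriangleup_{st}$); its two edges at $v$ are primitive by construction, and its third edge has interior contained in the interior of $\bigtriangleup_{st}$ (or coincides with a short edge), so it has only its three vertices as lattice points. By Pick's theorem its area is $1/2$, hence the parallelogram has area $1$. This argument is uniform: it uses nothing about the coordinates, nothing about Oka's algorithm or gcd-of-minors, and no case analysis beyond the definition of a standard polygon.

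You actually stumble on this argument yourself in your triangle discussion (``no interior lattice points on two of the three edges, no interior lattice points in the triangle'' forces unimodularity), but do not notice that it handles the trapezoid case equally well, and so you still plan a coordinate computation there. Your invocation of the gcd-of-$2$-minors remark is also tangential: that remark establishes primitivity of individual $\ell_w$ on $H_i$, which you already know from the definition of $\frl_{st,w}$; what you need is that the pair is a basis, and that comes from the no-interior-points condition alone.
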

\begin{proof}
Since $\bigtriangleup_{st}$
contains no integral points in its interior, neither does the
parallelogram spanned by the two primitive vectors supported on its edges
$I_1$ and $I_2$.  
\end{proof}

\bekezdes A polygon $\bigtriangleup$
in the affine plane $A$ is a {\it small scalar translate} of a standard
polygon  $\bigtriangleup_{st}$ if
\begin{equation}\label{eq:small1}
\mbox{there is a vector $V\in\R^3$ and $r\in(0,1)$ such that
$\bigtriangleup= V+ r\cdot \bigtriangleup_{st}$.}\end{equation}

Let $\bigtriangleup$ be a small scalar translate of $\bigtriangleup_{st}$, we denote
its edges by the same symbols $\{I_w\}_w$ ($I_1$ corresponding to the long edge).
Then for each $w$ there exists a unique integral
primitive affine function $\frl'_w$ on $A$ so that $\frl'_w|_{I_w}$ is constant with value in
$(-1,0]$,  and $\frl'_{w}|_{\bigtriangleup}\geq \frl'_{w}|_{I_w}$.
Additionally, we select $\epsilon_w\in\{0,1\}$ (for motivation see \ref{again}),
so that if a {\it short edge} contains a lattice point then $\epsilon_w$ might be 0 or 1, otherwise it is zero.
Finally, set $\frl_{w}:=\frl'_{w}-\epsilon_w$.

In our applications, $F_i^{cn}$ is a small scalar translate, whose equations
$\frl'_{w}(p)\geq \frl'_{w}|_{I_w}$  are replaced by
$\frl'_w(p)\geq \frl'_{w}|_{I_w}+\epsilon_w$ in order to obtain $F_i^{cn-}$.

\begin{lemma}\label{LAT} With the above notations one has:

(a) All the lattice points of $\bigtriangleup$ are situated on a line parallel to $I_1$.

(b) The function $t(\bigtriangleup_{st})\frl_1 + \sum_{w>1} \frl_w$ is constant (say $c$) on $A$.
Moreover,  $\max\{0,c + 1\}=|(\bigtriangleup\setminus \cup_{\epsilon_w=1}I_w)\cap \Z^3|$.
\end{lemma}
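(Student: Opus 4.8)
The plan is to work directly with the standard polygon $\bigtriangleup_{st}$ (a standard triangle or trapezoid) and carry all statements over to $\bigtriangleup = V + r\cdot\bigtriangleup_{st}$ via the affine scaling, using the classification in Lemma \ref{lem:13} and the key isomorphism of Lemma \ref{lem:comp}. First, for part (a): since $\bigtriangleup_{st}$ is standard, it has at most one edge (the long edge $I_1$) with interior lattice points, and no interior lattice points at all. Under a small scalar translate with ratio $r\in(0,1)$, any lattice point of $\bigtriangleup$ would be an interior point or boundary point of a copy of $\bigtriangleup_{st}$ that has been shrunk, so it cannot be a vertex of $\bigtriangleup$; the only way lattice points survive in a proper scalar shrinking is along a family of parallel lattice lines, and because $\bigtriangleup_{st}$ has lattice width one in the direction transverse to $I_1$ (no interior points, short edges lattice-point-free in their interiors), the shrunk polygon can contain lattice points on at most one line, which must be parallel to $I_1$. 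I would make this precise by choosing affine coordinates $(u_1,u_2)$ on $A$ with $u_1 = \frl_{st,1}$, $u_2 = \frl_{st,2}$ (legitimate by Lemma \ref{lem:comp}, which says $(\frl_{st,1},\frl_{st,2})$ is a $\Z$-affine isomorphism $A\cap\Z^3\to\Z^2$); in these coordinates $\bigtriangleup_{st}$ has lattice width $1$ in the $u_2$-direction, so $r\cdot\bigtriangleup_{st}$ has $u_2$-width $r<1$, forcing all its (hence all of $\bigtriangleup$'s) lattice points onto one horizontal line $u_2 = $ const, which is parallel to $I_1$.

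For part (b), the identity that $t(\bigtriangleup_{st})\frl_1 + \sum_{w>1}\frl_w$ is constant on $A$ is an affine-linear statement, so it suffices to check it for the primitive functions $\frl'_w$ (the $\epsilon_w$ only shift the sum by an integer constant, which is harmless) and, by scaling, for the corresponding primitive functions on the standard polygon. There it becomes the assertion that $t(\bigtriangleup_{st})\frl_{st,1} + \sum_{w>1}\frl_{st,w}$ is constant. For a standard triangle ($t=1$) this is the classical fact that the sum of the three primitive "inward" affine functions of a lattice triangle with no interior points and primitive edges is constant — equivalently, the sum of the primitive normals is zero as a linear functional on the plane. For a standard trapezoid with vertices as in Lemma \ref{lem:13}, the long edge $I_1$ is parallel to the opposite short edge, and a direct computation with the explicit vertices $(p,0,n)$, $(0,q,n)$, $(r_1,r_2+tq,0)$, $(r_1+tp,r_2,0)$ (so $t = t(\bigtriangleup_{st})$) shows that weighting the long edge's functional by $t$ exactly balances the contributions of the two slanted edges plus the short parallel edge. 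I would phrase this uniformly: decompose the trapezoid into $t$ lattice triangles by the $t-1$ interior lattice points of the long edge and apply the triangle case, or simply verify the normal-vector identity $t\,N_1 + \sum_{w>1} N_w = 0$ (as functionals on the $2$-plane $A$), which is the linear part of the claim, and then fix the constant by evaluating at one vertex.

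Finally, the counting formula $\max\{0, c+1\} = |(\bigtriangleup\setminus\bigcup_{\epsilon_w=1}I_w)\cap\Z^3|$ follows by combining (a) and (b). By (a), the relevant lattice points all lie on one line parallel to $I_1$; parametrize that line by the primitive functional $\frl_1$ (restricted to the line it is affine with unit step, by primitivity of $\frl_{st,1}$ and the scaling). The inequalities $\frl_w \ge \frl_w|_{I_w}$ cutting out $\bigtriangleup$ (or its $\epsilon$-modified version) bound $\frl_1$ on this line from below by $\frl_1|_{I_1}$ (the long edge, or its shift) and from above by the constraints from the slanted edges; using the constant-sum identity from (b) with value $c$, the number of integers $\frl_1$ can take on the segment works out to $c+1$ when the segment is nonempty and to give an empty count (hence $\max\{0,c+1\}=0$) otherwise. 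The role of the $\epsilon_w\in\{0,1\}$: removing $I_w$ from $\bigtriangleup$ when $\epsilon_w = 1$ corresponds exactly to replacing $\frl'_w \ge \frl'_w|_{I_w}$ by the strict-type bound $\frl_w = \frl'_w - 1 \ge \frl'_w|_{I_w}$, i.e. dropping the lattice points that sit on that short edge, which is precisely the adjustment that makes the count come out to $c+1$ with the modified constant $c$. The main obstacle I anticipate is the bookkeeping in this last step: one has to be careful that only \emph{short} edges can carry the $\epsilon_w=1$ flag (guaranteed by the construction in \ref{again} and the fact that the long edge lies in a coordinate plane, cf. \ref{ex:trap}), so that removing those edges removes at most the two endpoints of the lattice segment on the distinguished line and never an interior lattice point of that segment — this is what keeps the count equal to $c+1$ rather than something smaller, and it is where the standardness hypothesis (at most one edge with interior lattice points, and that one is the long edge) is used essentially.
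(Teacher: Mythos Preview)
Your overall strategy coincides with the paper's: use Lemma~\ref{lem:comp} to pick affine coordinates $(x,y)$ on $A_{st}$ (and then on $A$) with $\frl_1=y$, $\frl_2=x$, verify the weighted sum $t\frl_1+\sum_{w>1}\frl_w$ is constant by writing down the remaining $\frl_{st,w}$ explicitly in these coordinates, and then count lattice points on the single line $y=0$. There are, however, two genuine errors in your execution.

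First, your claim that a standard triangle has $t(\bigtriangleup_{st})=1$ is false. Definition~\ref{def:standard_small} allows the long edge of a standard triangle to have $t-1\geq 0$ interior lattice points for any $t\geq 1$; the paper's triangle has vertices $(0,0)$, $(t,0)$, $(0,1)$ in the chosen coordinates, with $\frl_{st,3}=-x-ty$, and then $t\frl_{st,1}+\frl_{st,2}+\frl_{st,3}=ty+x+(-x-ty)=0$. Your ``classical fact'' about the sum of primitive inward normals only covers the case $t=1$, and your proposed decomposition of the trapezoid into triangles does not repair this, since you would still need the general-$t$ triangle identity. (Your fallback suggestion to verify $tN_1+\sum_{w>1}N_w=0$ directly is correct, and is exactly what the explicit coordinate computation accomplishes for both shapes.)

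Second, in the counting step you propose to ``parametrize that line by the primitive functional $\frl_1$''. But the line in part~(a) is parallel to $I_1$, hence $\frl_1$ is \emph{constant} on it (equal to $0$); the correct parameter is $\frl_2=x$. On $y=0$ the constraints $\frl_w\geq 0$ become $x\geq 0$ and (for the triangle) $-x+c\geq 0$, giving exactly the points $(0,0),\ldots,(c,0)$ when $c\geq 0$ and nothing otherwise; the trapezoid case splits further on whether the short parallel edge $I_3$ lies above or below $y=0$. Relatedly, in your part~(a) argument you have swapped $u_1$ and $u_2$: the lattice width of $\bigtriangleup_{st}$ is $1$ in the $u_1=\frl_1$ direction (transverse to $I_1$), not the $u_2$ direction, and the resulting line is $u_1=0$, not $u_2=\text{const}$.
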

\begin{proof}
The proof splits into the cases when $\bigtriangleup_{st}$ is a triangle or it is a
trapezoid. First we fix the equations $\frl_{st,w}$ of the edges of  $\bigtriangleup_{st}$.
We abbreviate $t:=t(\bigtriangleup_{st})$.
 By Lemma~\ref{lem:comp}, in the case of a triangle,
we can take integral linear coordinates $(x,y)$ on $A_{st}$ so that $\frl_{st,1}=y$,
$\frl_{st,2}=x$ and $\frl_{st,3}=-x-ty$. In the case of trapezoid (see Lemma \ref{lem:13} too)
we can take
$\frl_{st,1}=y$,
$\frl_{st,2}=x$, $\frl_{st,3} =1-y$, and $\frl_{st,4}=-x-(t-1)y+t$.
In both cases the sum $t\frl_{st,1} + \sum_{w>1} \frl_{st,w}$  is constant on $A_{st}$.
Then note that the corresponding equations of $\bigtriangleup$, or their modifications by $\{\epsilon_w\}_w$,
are obtained from the above ones by adding constants, hence
$t\frl_{1} + \sum_{w>1} \frl_{w}$ is constant too. Evaluating on a lattice point
we get that this constant is an integer $c$.

In fact, in $A$ too,  we can take affine coordinate such that $\frl_1=y$ and $\frl_2=x$.

In $\bigtriangleup_{st}$ any line parallel to $I_2$  contains at most two lattice points
(two is realized by the endpoints of that edge). Therefore,  a parallel line to $I_2$
in $\bigtriangleup$ contains  at most one lattice point (by (\ref{eq:small1})).
Hence all the lattice points of $\bigtriangleup$ are on the line $y=0$. This shows (a).

In order to prove the second part of (b), first let us take $\epsilon_w=0$ for all $w$.

In the case of triangle $\frl_3=-x-ty+c$. If $c\geq 0$ then the lattice points of
$\bigtriangleup$ are exactly  $\{(0,0),\ldots, (c,0)\}$. If $c<0$ then $\bigtriangleup$
has no lattice point.

Next assume that $\bigtriangleup$ is a trapezoid. By the choice of $\frl_1=y$ and $\frl_2=x$, the vertex
$(a,b):=I_1\cap I_2$ is in $(-1,0]^2$. If $r\in (0,1)$ is the homothety factor of $\bigtriangleup$
then the other vertices are $(a,b+r)$, $(a+r,b+r)$ and $(a+tr,b)$. Since $a+r\in(-1, 1)$, we have two cases.

If $a+r\geq 0$, then   $\frl_3=-y$. Therefore, $\frl_4=-x-(t-1)y+c$ for some $c$, which is the sum
$t\frl_1+\sum_{w>1}\frl_w$.
Again, if $c\geq 0$ then the lattice points of
$\bigtriangleup$ are exactly  $\{(0,0),\ldots, (c,0)\}$, while  if $c<0$ then $\bigtriangleup$
has no lattice point.

If $a+r<0$, then $\frl_3=-y-1$. Clearly $\bigtriangleup\cap \Z^3=\emptyset$,
and by a computation $c<0$ too.

Finally notice that each $\epsilon_w=1$ decreases both sides of (b) by 1 till either we use all these
edges with $\epsilon_w=1$, or  $c$ becomes negative (hence both sides of (b) become zero).
\end{proof}

\begin{corollary}\labelpar{cor:I} {\bf (The proof of (\ref{eq:egy}))}
 Consider the step $\bar{z}_i\mapsto \bar{z}_i+E_{v(i)}$
where $v(i)$ is a face vertex  and $(E_{v(i)},\bar{z}_i)\leq 0$, as in \ref{bek:faces}. Then
(\ref{eq:egy}) holds:
\begin{equation*}\label{eq:pf:points_and_intersection}
 1-(E_{v(i)},\bar{z}_i)\leq |P_i|.
\end{equation*}
\end{corollary}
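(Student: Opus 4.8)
The plan is to reduce the statement, through Lemma~\ref{lem:cone} and Lemma~\ref{LAT}, to a purely combinatorial inequality between the integer $c$ produced by Lemma~\ref{LAT} and $(E_{v(i)},\bar z_i)$, and then to establish that inequality from the defining relation~(\ref{eq:5}) at the node $v(i)$ together with the ratio test. Throughout, $v(i)$ is a face vertex and $(E_{v(i)},\bar z_i)\le 0$, as in \ref{bek:faces}.

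First I would set $r:=m_{v(i)}(\bar z_i)/m_{v(i)}(Z_K-E)$. Since $v(i)$ is chosen by the ratio test applied to $\bar z_i=c(\,\cdot\,)$, Lemma~\ref{lem:test} puts $v(i)$ in the support of $Z_K-E-\bar z_i$, and the hypothesis $(E_{v(i)},\bar z_i)\le 0$ forces $m_{v(i)}(\bar z_i)>0$ (otherwise $(E_{v(i)},\bar z_i)=\sum_{w\in\calv_{v(i)}}m_w(\bar z_i)\ge 0$ with at least one strictly positive summand, since the ratio test produces a neighbour $w$ with finite nonzero ratio); hence $r\in(0,1)$. Because $C_{v(i)}$ is the cone with apex $0$ over $\bigtriangleup_{st}:=F^{nb}_{v(i)}(Z_K-E)$ and $H_i=\{\ell_{v(i)}=m_{v(i)}(\bar z_i)\}$, the slice $F_i^{cn}=C_{v(i)}\cap H_i$ is exactly $r\cdot\bigtriangleup_{st}$; by Lemma~\ref{lem:ZKE} and Lemma~\ref{lem:zkminuse}, $\bigtriangleup_{st}$ is the corresponding face of $\Gamma_+(f)-(1,1,1)$, hence (outside the weighted homogeneous exception noted in \ref{ex:trap}, already covered by part (a)) a standard triangle or trapezoid by Lemma~\ref{lem:13}. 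Thus $F_i^{cn}$ is a small scalar translate of $\bigtriangleup_{st}$ in the sense of~(\ref{eq:small1}); moreover $r<1$ forces every coordinate of every point of $F_i^{cn}$ to be $>-1$, so all its lattice points have nonnegative coordinates, and Lemma~\ref{lem:cone} together with the Remark after it gives $|P_i|=|(F_i^{cn}\setminus\bigcup_{\epsilon_w=1}I_w)\cap\Z^3|$. Applying Lemma~\ref{LAT}(b) to $\bigtriangleup=F_i^{cn}$ with the $\epsilon_w$ of \ref{again} then yields $|P_i|=\max\{0,c+1\}$, where $c$ is the integral constant value on $H_i$ of $t\,\frl_1+\sum_{w>1}\frl_w$, $t:=t(\bigtriangleup_{st})$ and $I_1$ the long edge.

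It then remains to prove $c\ge -(E_{v(i)},\bar z_i)$: since $(E_{v(i)},\bar z_i)\le 0$ this gives $c\ge 0$, hence $\max\{0,c+1\}=c+1\ge 1-(E_{v(i)},\bar z_i)$, which is~(\ref{eq:egy}). The geometric input here is that when $t\ge 2$ the long edge of $\bigtriangleup_{st}$ lies in a coordinate plane (Lemma~\ref{lem:13}), i.e. in $\{z_j=-1\}$, so its rescaling $I_1\subset\{z_j=-r\}$ with $-r\in(-1,0)$ meets no lattice point; consequently $\epsilon_w=0$ in \ref{again} for the vertices $w\in\calv_{v(i)}$ attached to the long edge, consistently with the convention of Lemma~\ref{LAT}, and the $t$ legs of $G$ accounted for by that edge are mutually identical chains, so $c(\bar z_{i-1}+E_{v(i-1)})$ — hence $\bar z_i$ — assigns them a common value $\nu$, and their common linear functional is a single $\ell_{w_1}$ with $\ell_{w_1}|_{I_1}=r\,m_{w_1}(Z_K-E)$. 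Writing $b_{v(i)}\ell_{v(i)}+t\,\ell_{w_1}+\sum_{\mathrm{short}\ w}\ell_w=0$ from~(\ref{eq:5}) (valid because $v(i)$ is a node, so $\calv^e_{v(i)}=\calv_{v(i)}$ by Proposition~\ref{prop:OKA}(c)), evaluating $c$ at a point of $I_1$ and expressing each $\frl_w$ as $\ell_w-\lceil r\,m_w(Z_K-E)\rceil-\epsilon_w$, the terms in $m_{v(i)}(\bar z_i)$ cancel and one is reduced to checking $\lceil r\,m_w(Z_K-E)\rceil+\epsilon_w\le m_w(\bar z_i)$ for each short edge and $t\,\lceil r\,m_{w_1}(Z_K-E)\rceil\le t\,\nu$ for the long edge. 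The first follows from $r_{i,w}\le 1$ (ratio test) together with the fact that $\epsilon_w=1$ only when $r\,m_w(Z_K-E)$ is an integer lying strictly below $m_w(\bar z_i)$; the second from $\nu\ge r\,m_{w_1}(Z_K-E)$ and $\nu\in\Z$. The cases $(E_{v(i)},\bar z_i)>0$, $\delta_{v(i)}\le 2$, and the intermediate completion steps are disposed of by Lemma~\ref{lem:easy}, Lemma~\ref{lem:stepbar} and Proposition~\ref{prop:cl}(B) as in \ref{ss:princ}.

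The main obstacle will be precisely the bookkeeping of the previous paragraph: one must keep straight that a long edge carrying $t-1\ge 1$ interior lattice points corresponds to $t$ distinct legs of $G$ rather than to one vertex; that these legs are identical chains attached to $v(i)$ and therefore receive equal multiplicities $\nu$ under $c(\,\cdot\,)$; and that after scaling by $r\in(0,1)$ the long edge — being in a coordinate plane — no longer meets $\Z^3$, which is what simultaneously legitimizes the $\epsilon_1=0$ convention of Lemma~\ref{LAT} and makes the coefficient $t$ in the definition of $c$ match the $t$ equal summands $m_{w^{(j)}}(\bar z_i)=\nu$ in $(E_{v(i)},\bar z_i)$. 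Once these structural facts are isolated, the remaining inequalities are immediate consequences of the ratio test and of $\bar z_i$ being a $c(\,\cdot\,)$.
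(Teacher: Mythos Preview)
Your proof is correct and follows essentially the same route as the paper's: reduce to Lemma~\ref{lem:cone} and Lemma~\ref{LAT}(b), then compare $c$ with $-(E_{v(i)},\bar z_i)$ using the relation~(\ref{eq:5}) and the inequality $m_w(\bar z_i)\ge\lceil r_{i,w}m_w(\bar z_i)\rceil+\epsilon_w$ coming from the ratio test. The paper compresses your second paragraph into a single line by summing directly over all $w\in\calv_{v(i)}$ (which automatically produces the $t$ repeated terms from the long edge) to get $-(E_{v(i)},\bar z_i)=\sum_{w\in\calv_{v(i)}}(\ell_w(p)-m_w)\le\sum_{w\in\calv_{v(i)}}\frl_w(p)=c$; the explicit bookkeeping you carry out about the $t$ identical legs, their common value $\nu$ under $c(\cdot)$, and the vanishing of $\epsilon_w$ on the long edge is exactly what justifies that compressed identity, so the content is the same.
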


\begin{proof} Write $m_u=m_u(\bar{z}_i)$.
Recall that $\bigtriangleup=F_i^{cn}$ is given by equations $\ell_w(p)\geq r_{i,w}\cdot m_w$,
$F_i^{cn-}$ is given by $\ell_w(p)\geq r_{i,w}\cdot m_w+\epsilon_w$ ($w\in \calv_{v(i)}$).
Hence $\frl_w=\ell_w-\lceil r_{i,w}\cdot m_w\rceil-\epsilon_w$, and
$m_w\geq \lceil r_{i,w}\cdot m_w\rceil +\epsilon_w$ for all $w$.
(For the definition of $r_{i,w}$ see (\ref{eq:cone}).)

Th
en  $\calv_{v(i)}^e=\calv_{v(i)}$ (cf. Proposition~\ref{prop:OKA}(c)) and (\ref{eq:5}) imply
for any $p\in H^=_{v(i)}(\bar{z}_i)$
\begin{equation*} \label{eq:proof_of_main_prop}\begin{split}
 - (E_{v(i)},\bar{z}_i) &=- b_{v(i)} m_{v(i)} - \sum_{w\in\calv_{v(i)}} m_{w}
            =-b_{v(i)} \ell_{v(i)}(p) - \sum_{w\in\calv_{v(i)}} m_{w} \\
           & = \sum_{w\in\calv_{v(i)}}( \ell_w(p)-m_{w})\leq
           \sum_{w\in\calv_{v(i)}} \frl_w(p).
\end{split}
\end{equation*}
Since by assumption $(E_{v(i)},\bar{z}_i)\leq 0$, the sum $c:=  \sum_{w\in\calv_{v(i)}} \frl_w$
is non--negative. Hence, by Lemma \ref{LAT}, $c+1$ is the number of lattice points in $F_i^{cn-}$,
which is $|P_i|$ by Lemma~\ref{lem:cone}.
\end{proof}

\subsection{The proof of (\ref{eq:ketto})}\labelpar{ss:ketto}
The proof is based on the comparison of the divisorial  valuation/filtration  with the Newton
filtration.  For notations see subsection \ref{ss:weight}.

\begin{lemma} \labelpar{lem:EGZ} \cite{EGZ}
For any  $0\neq g\in\C\{z\}$ and  $v\in\calv$ we denote by $g_v$ the principal part of
$g$ with respect to $\wt_v$, the weight corresponding to $v$. Then for any non-zero $h\in\C\{z\}$,
$\wt_v(h) < {\rm div}_v(h)$ if and only if $h_v$ is divisible by $f_v$ over the
ring $\C\{z_1,z_2,z_3\}[z_1^{-1},z_2^{-1},z_3^{-1}]$.
\end{lemma}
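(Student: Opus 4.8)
The statement compares two filtrations on $\C\{z\}$ attached to a fixed divisor $E_v$: the Newton order $\wt_v = \langle N_v, \cdot\rangle$ coming from the toric combinatorics, and the true divisorial order ${\rm div}_v$ of the pull-back to $\widetilde X$. By Remark \ref{rem:wtdiv} we always have $\wt_v(h)\le {\rm div}_v(h)$, and equality holds for monomials, so the content is to characterize strict inequality. The plan is to reduce to the graded (weighted-homogeneous) picture associated with the one-parameter subgroup $N_v$: writing $h = h_v + (\text{higher } \wt_v)$, the only way the pull-back can vanish to higher order along $E_v$ than $h_v$ predicts is if $h_v$ itself already vanishes more, which by the toric description of $E_v$ should be governed by divisibility by $f_v$. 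First I would set up, following \cite{Oka} and \cite{BN}, the local model of the resolution near the generic point of $E_v$: the vector $N_v$ defines a monomial valuation, and passing to the corresponding weighted blow-up (or, near $E_v$, to the affine chart of the toric modification subordinate to the ray $\R_{\ge0}N_v$), one sees that $\widetilde X$ locally looks like the hypersurface $\{f = 0\}$ in a toric chart, with $E_v$ cut out by one of the toric coordinates, say $u$. The key identity is that ${\rm div}_v(h)$ equals $\wt_v(h)$ plus the order of vanishing in $u$ of the strict transform of $h$ after factoring out $u^{\wt_v(h)}$; and this strict transform, restricted to $E_v = \{u=0\}$, is (a unit times) the image of the principal part $h_v$ in the coordinate ring of $E_v$.

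The heart of the argument is then: the restriction of the normalized transform of $h$ to $E_v$ vanishes if and only if $h_v$, viewed as a Laurent polynomial in $z_1,z_2,z_3$ which is weighted-homogeneous of degree $\wt_v(h)$ for the weights $N_v$, is divisible by $f_v$ in the Laurent ring $\C[z_1^{\pm},z_2^{\pm},z_3^{\pm}]$. For this I would use that $E_v$ (for $v\in\calv$, in particular for all vertices since all entries of $N_v$ are positive) is, up to the finite toric data, the curve obtained by projectivizing the weighted-degree-$\wt_v(f)$ part of $\{f=0\}$; more precisely, the total transform of $\{f=0\}$ contains $E_v$ with multiplicity $\wt_v(f)$, and the residual curve meets $E_v$ exactly where $f_v = 0$ on the torus. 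Concretely, one writes $f = u^{\wt_v(f)}(f_v\circ(\text{toric substitution}) + u\cdot(\cdots))$, and on the hypersurface this forces a relation modulo $u$; a function $h$ with $h = u^{\wt_v(h)}(\overline{h_v} + u\cdot(\cdots))$ then has ${\rm div}_v(h) > \wt_v(h)$ on $X$ precisely when $\overline{h_v}$ is a multiple of $\overline{f_v}$ in $\calO_{E_v}$ of the ambient smooth space — equivalently, pulling everything back to the torus chart before projectivizing, when $f_v \mid h_v$ in the Laurent polynomial ring. I would then check that clearing denominators (allowing $z_i^{-1}$) is exactly what is needed because the toric substitution replaces the $z_i$ by monomials in the chart coordinates which become units away from the other exceptional components.

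I expect the main obstacle to be the bookkeeping of the toric chart near $E_v$ when $v$ is \emph{not} a face (node) vertex but lies on a chain: there the ray $\R_{\ge0}N_v$ is obtained by the continued-fraction subdivision (\ref{eq:3}), and one must verify that the normal vector $N_v = r_1 N_{w_1} + r_2 N_{w_2}$ still cuts out an honest smooth toric chart in which $E_v$ is a coordinate hyperplane, and that the principal part $f_v$ with respect to this (possibly non-primitive-looking) weight is the correct thing — i.e. that $f_v$ is the restriction of $f$ to the face (edge or face) of $\Gamma(f)$ selected by $N_v$, which for a chain vertex is an \emph{edge} of the diagram. One should also handle carefully the degenerate possibility $\wt_v(h) = {\rm div}_v(h)$ with $f_v \mid h_v$ but the quotient vanishing on the torus — this cannot happen since $f_v\mid h_v$ with nonzero quotient already forces the weighted degrees to satisfy $\wt_v(h)\ge \wt_v(f)$ and a genuine drop. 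The non-degeneracy of the Newton principal part is used exactly to ensure $f_v$ (for $v$ a face vertex) defines a smooth, reduced curve on the torus, so that "divisible by $f_v$" is the same as "vanishing on $\{f_v = 0\}\cap(\C^*)^3$"; I would invoke \cite{Kou}, \cite{Oka} and \cite{BN} for these standard facts rather than reprove them, and for the chain vertices reduce to the face-vertex case by the observation that an edge of $\Gamma(f)$ with its continued-fraction chain is itself a (two-variable, after choosing the face plane) non-degenerate situation.
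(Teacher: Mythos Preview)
The paper does not prove this lemma: it is quoted verbatim from Ebeling--Gusein-Zade \cite{EGZ} and used as a black box in the proof of Proposition~\ref{prop:codim}. So there is nothing to compare against in the paper itself.

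Your plan is a correct reconstruction of the standard toric argument and is essentially how \cite{EGZ} proceeds. The key reduction --- pass to an affine toric chart in which the toric divisor $D_v$ is $\{u=0\}$, write $h = u^{\wt_v(h)}(\overline{h_v} + u\cdot(\ldots))$ and $f = u^{\wt_v(f)}(\overline{f_v} + u\cdot(\ldots))$, and observe that on the strict transform $\widetilde X = \{\overline{f_v} + u\cdot(\ldots)=0\}$ one has $\overline{f_v}\equiv -u\cdot(\ldots)$, so that $\overline{h_v}|_{E_v}=0$ iff $\overline{f_v}\mid\overline{h_v}$ in $\calO_{D_v}$ --- is exactly right, and the translation back to divisibility of the weighted-homogeneous pieces $h_v,f_v$ in the Laurent ring is the identification of the dense torus orbit of $D_v$ with the quotient of $(\C^*)^3$ by the one-parameter subgroup $N_v$. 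Non-degeneracy enters precisely where you say, to guarantee $(f_v)$ is radical in the Laurent ring so that ``vanishing on $\{f_v=0\}\cap(\C^*)^3$'' coincides with ``divisible by $f_v$''. The worry you flag about chain vertices is real but mild: for $v$ on a chain the face selected by $N_v$ is an edge of $\Gamma(f)$, and non-degeneracy on that edge (part of the hypothesis) again makes $f_v$ reduced in the Laurent ring; the toric chart exists because Oka's subdivision is by construction a regular fan, so every $N_v$ is part of a $\Z$-basis of $\Z^3$. Your parenthetical about ``the quotient vanishing on the torus'' is a non-issue: a nonzero Laurent polynomial never vanishes identically on $(\C^*)^3$.
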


As an immediate consequence we have

\begin{prop} \labelpar{prop:codim}
At each step $\bar{z}_i\mapsto \bar{z}_i+E_{v(i)}$  we have
\[
  |P_i|\leq \dim\ \frac{H^0(\X,\calO(-\bar{z}_i))}{H^0(\X,\calO(-\bar{z}_{i}-E_{v(i)}))}.
\]
\end{prop}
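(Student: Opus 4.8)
The plan is to produce, for each step $\bar z_i \mapsto \bar z_i + E_{v(i)}$ with $v(i)$ a face vertex and $(E_{v(i)},\bar z_i)\le 0$, an explicit collection of $|P_i|$ monomials in $H^0(\X,\calO(-\bar z_i))$ whose images are linearly independent in the quotient $H^0(\X,\calO(-\bar z_i))/H^0(\X,\calO(-\bar z_i-E_{v(i)}))$. The natural candidates are the monomials $z^p$ with $p \in P_i = F_i^{cn-}\cap \Z_{\ge 0}^3$ (using Lemma~\ref{lem:cone}). First I would recall from \ref{rem:wtdiv} that $\wt(z^p) = {\rm div}(z^p)$, so $z^p \in H^0(\X,\calO(-\bar z_i))$ precisely when ${\rm div}(z^p) = \wt(z^p) \ge \bar z_i$, i.e. $\ell_w(p) = \wt_w(z^p) \ge m_w(\bar z_i)$ for all $w\in\calv$; this holds for $p\in P_i$ by definition of $\Gamma^e_+(\bar z_i)$ and the containment $F_i^{cn-}\subset F_i^{nb}$, and in fact $P_i = F_i \cap \Z^3_{\ge 0}$ lies on the face $H^=_{v(i)}(\bar z_i)$, so $\ell_{v(i)}(p) = m_{v(i)}(\bar z_i)$ exactly, meaning $z^p \notin H^0(\X,\calO(-\bar z_i - E_{v(i)}))$.

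Next I would show linear independence modulo $H^0(\X,\calO(-\bar z_i - E_{v(i)}))$ of $\{z^p : p\in P_i\}$. Suppose $h = \sum_{p\in P_i} \lambda_p z^p \in H^0(\X,\calO(-\bar z_i - E_{v(i)}))$ with not all $\lambda_p$ zero. Then ${\rm div}_{v(i)}(h) \ge m_{v(i)}(\bar z_i) + 1 > m_{v(i)}(\bar z_i) = \wt_{v(i)}(z^p)$ for every $p$ appearing, hence $\wt_{v(i)}(h) < {\rm div}_{v(i)}(h)$. By Lemma~\ref{lem:EGZ}, the $\wt_{v(i)}$-principal part $h_{v(i)}$ is divisible by $f_{v(i)}$ in the Laurent ring $\C\{z\}[z_1^{-1},z_2^{-1},z_3^{-1}]$. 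The key point is that $h_{v(i)} = h$ itself, since all monomials $z^p$ ($p\in P_i$) have the same $\wt_{v(i)}$-value $m_{v(i)}(\bar z_i)$ — they all lie on the single supporting plane $H^=_{v(i)}(\bar z_i)$. So $h$ is (up to a monomial) divisible by $f_{v(i)}$, the face polynomial of $f$ along the face corresponding to $v(i)$. But $f_{v(i)}$ is the restriction of $f$ to a $2$-dimensional compact face $\bigtriangleup$ of $\Gamma(f)$, which (after the shift by $(1,1,1)$ and passage to affine coordinates via Lemma~\ref{lem:comp}) is a standard triangle or trapezoid; the exponents occurring in $h$ all lie in the small scalar translate $F_i^{cn-}$ of that face, all on a single line parallel to the long edge $I_1$ by Lemma~\ref{LAT}(a). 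One then checks that no nonzero Laurent multiple of $f_{\bigtriangleup}$ can have Newton support confined to such a thin strip strictly smaller than $\bigtriangleup$ — the support of $g\cdot f_{\bigtriangleup}$ must contain a translate of the full support of $f_{\bigtriangleup}$, which spans the two-dimensional face and in particular cannot be collinear. This contradiction forces all $\lambda_p = 0$.

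The main obstacle I anticipate is precisely this last combinatorial/algebraic step: proving that $f_{\bigtriangleup}$ times any Laurent monomial (or Laurent polynomial, if one is careless about what $h_{v(i)}$ can be) cannot be supported on the thin strip $F_i^{cn-}\cap\Z^3$. This requires using non-degeneracy of $f$ on $\bigtriangleup$ in an essential way — it is here that one needs that $f_{\bigtriangleup}$ genuinely involves monomials from all edges of the face, not just a sub-edge — together with the geometry established in \ref{ss:POLY}: that $F_i^{cn}$ is a \emph{strict} scalar shrink ($r\in(0,1)$) of the standard polygon, so it is too small to contain a lattice translate of $\supp(f_{\bigtriangleup})$. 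A clean way to organize this is: if $h_{v(i)} = z^{q}\cdot g\cdot f_{\bigtriangleup}$ with $g$ a genuine polynomial factor, compare Newton diagrams — $\Gamma(g f_{\bigtriangleup}) \supseteq q' + \Gamma(f_{\bigtriangleup})$ for any vertex $q'$ of $\Gamma(g)$ — and observe this forces $\supp(h_{v(i)})$ to contain two non-collinear points, contradicting Lemma~\ref{LAT}(a) applied to $F_i^{cn-}$. Once this is in place, $|P_i| = |F_i^{cn-}\cap\Z^3_{\ge 0}|$ monomials are independent in the quotient, giving \eqref{eq:ketto}; combined with Corollary~\ref{cor:I} and \eqref{eq:PGC}, equality holds throughout and Theorem~\ref{th:main}(c) follows, reproving the Merle--Teissier count $\sum_i |P_i| = p_g$ along the way.
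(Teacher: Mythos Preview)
Your proposal is correct and follows essentially the same route as the paper: exhibit the monomials $z^p$ for $p\in P_i$, use $\wt(z^p)=\mathrm{div}(z^p)$ to place them in $H^0(\X,\calO(-\bar z_i))\setminus H^0(\X,\calO(-\bar z_i-E_{v(i)}))$, and for linear independence apply Lemma~\ref{lem:EGZ} to conclude that a nontrivial relation would force $g=f_{v(i)}\cdot h$ in the Laurent ring, which is impossible since $\supp(g)\subset P_i$ is collinear (Lemma~\ref{LAT}(a)) while $\supp(f_{v(i)})$ spans the two--dimensional face. One small remark: your anticipated ``main obstacle'' is lighter than you fear---non--degeneracy is not needed here, only that $v(i)$ is a \emph{face} vertex, so the vertices of the compact $2$--face $\bigtriangleup$ (which automatically lie in $\supp(f)$ by the definition of $\Gamma(f)$) already make $\supp(f_{v(i)})$ non--collinear; then the Newton polytope of $f_{v(i)}\cdot h$ is the Minkowski sum and hence $2$--dimensional, so its vertices (always in the support of the product) are non--collinear as well.
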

\begin{proof}
Consider the functions $z^p$ for $p\in P_i$. Since ${\rm wt}(z^p)={\rm div}(z^p)$, cf. \ref{rem:wtdiv},
we have $z^p\in H^0(\X,\calO(-\bar{z}_i)) \setminus H^0(\X,\calO(-\bar{z}_{i}-E_{v(i)}))$.
Therefore, it is enough to show that the family
$(z^p)_{p\in P_i}$ is linearly independent modulo
$H^0(\X,\calO(-\bar{z}_{i}-E_{v(i)}))$. This can be verifies as follows.
 Assume that there exist $a_p\in\C$ (at least one of them non--zero)
such that
\begin{equation} \label{eq:lin_indep}
 g:= \sum_{p\in P_i} a_p z^p \in H^0(\X,\calO(-\bar{z}_{i}-E_{v(i)})).
\end{equation}
 Then $g$ is its own principal part with
respect to the weight corresponding to $v(i)$. We have
$\wt_{v(i)}(g) = m_{v(i)}(\bar{z}_i)$, but by (\ref{eq:lin_indep})
${\rm div}_{v(i)}(g) \geq  m_{v(i)}(\bar{z}_{i}+E_{v(i)})=m_{v(i)}(\bar{z}_i)+1$.
By Lemma \ref{lem:EGZ} this means that for some
$h \in\C\{z_1,z_2,z_3\}[z_1^{-1},z_2^{-1},z_3^{-1}]$ one has $g = f_{v(i)} \cdot h$. But
this is impossible, since the support of $g$ lies on a segment (cf. Lemma \ref{LAT}(b))
 whereas the same can not be said about $f_{v(i)}$ (since $v(i)$ is a face vertex),
  hence about  $f_{v(i)} \cdot h$ neither.
\end{proof}

\end{document}